\documentclass[10pt]{amsart}

\usepackage{amsfonts,amssymb,amscd,amstext}
\usepackage[a4paper,hmargin=3.5cm,vmargin=4cm]{geometry}
\usepackage{hyperref}
\usepackage{graphicx}
\usepackage{color}


\renewcommand{\leq}{\leqslant}
\renewcommand{\geq}{\geqslant}

\newcommand{\ptl}{\partial}

\newcommand{\rr}{{\mathbb{R}}}

\newcommand{\hh}{{\mathbb{H}}}
\newcommand{\nn}{{\mathbb{N}}}

\newcommand{\hhh}{\mathcal{H}}

\newcommand{\escpr}[1]{\big<#1\big>}
\newcommand{\Sg}{\Sigma}
\newcommand{\Om}{\Omega}
\newcommand{\eps}{\varepsilon}

\newcommand{\ga}{\gamma}
\newcommand{\Ga}{\Gamma}
\newcommand{\dg}{\dot{\ga}}

\newcommand{\n}{\nabla}
\newcommand{\m}{\int\limits }

\newcommand{\s}{\Sigma}

\newcommand{\Si}{\Sigma}

\newcommand{\mc}[1]{\mathcal{#1}}

\newcommand{\phM}{$(M,g_\hhh,\omega,J)$ }
\newcommand{\phMM}{pseudo-hermitian 3-manifold $(M,g_\hhh,\omega,J)$ }

\newcommand{\conMM}{contact sub-Riemannian 3-manifold $(M,g_\hhh,\omega)$ }

\DeclareMathOperator{\divv}{div}
\DeclareMathOperator{\tor}{Tor}

\DeclareMathOperator{\IA}{g(\tau(Z),\nu_h)}

\setlength{\parskip}{0.5em}

\newtheorem{theorem}{Theorem}[section]
\newtheorem{proposition}[theorem]{Proposition}
\newtheorem{lemma}[theorem]{Lemma}
\newtheorem{corollary}[theorem]{Corollary}

\theoremstyle{definition}

\newtheorem{remark}[theorem]{Remark}

\newtheorem{definition}{Definition} 

\theoremstyle{remark}

\numberwithin{equation}{section}

\setcounter{tocdepth}{1}

\begin{document}

\title[First and second variation formulae for the sub-riemannian area]{First and second variation formulae for the sub-Riemannian area in three-dimensional pseudo-hermitian manifolds}

\author[M. Galli]{Matteo Galli} \address{Departamento de
Geometr\'{\i}a y Topolog\'{\i}a \\
Universidad de Granada \\ E--18071 Granada \\ Espa\~na}
\email{galli@ugr.es}

\date{\today}

\thanks{Research supported by  MCyT-Feder grant MTM2010-21206-C02-01 and J. A. grant P09-FMQ-5088}
\subjclass[2000]{53C17, 49Q20} 
\keywords{sub-Riemannian geometry, minimal surfaces, stable surfaces, variation formulas, pseudo-hermitian manifolds, roto-traslation group}

\begin{abstract}
We calculate the first and the second variation formula for the sub-Riemannian area in three dimensional pseudo-hermitian manifolds. We consider general variations that can move the singular set of a $\mathcal{C}^2$ surface and non-singular variation for $\mc{C}^2_\hhh$ surfaces. These formulas enable us to construct a stability operator for non-singular $\mc{C}^2$ surfaces and another one for $\mc{C}^2$ (eventually singular) surfaces.  Then we can obtain a necessary condition for the stability of a non-singular surface in a pseudo-hermitian 3-manifold in term of the pseudo-hermitian torsion and the Webster scalar curvature. Finally we classify complete stable surfaces in the roto-traslation group $\mc{RT}$. 
\end{abstract}

\maketitle

\thispagestyle{empty}

\bibliographystyle{amsplain}

\tableofcontents

\section{Introduction}

Over the last years considerable efforts have been devoted to the study of critical points of the area functional in sub-Riemannian geometry and a large number of research papers have provided variation formulas in sub-Riemannian manifolds, \cite{Ba-SC-Vi}, \cite{ChengHwang2nd}, \cite{CJHMY}, \cite{DGNnotable}, \cite{DGNAV}, \cite{Da-Ga-Nh-Pa},  \cite{Hl-Pa2}, \cite{Hl-Pa}, \cite{Ri-Ro-Hu}, \cite{Ro-Hu}, \cite{Mo}, \cite{Ri-Ro}, \cite{Ro} among others.  
In \cite{Ri-Ro} the authors were able to produce a first variation formula for $\mc{C}^2$ surfaces and solve the isoperimetric problem in this class. This result gave a partial positive answer to a celebrated conjecture of Pansu, \cite{P}, which states that isoperimetric regions in $\hh^1$ are a one-parameter family of topological balls which are not metric balls. In \cite{Ri-Ro-Hu} the second variation formula allows to classify the entire stationary graphs in the Heisenberg group $\hh^1$ and to solve the sub-Riemannian Bernstein problem in the class of $\mc{C}^2$ horizontal graphs. These techniques were generalized for the pseudo-hermitian 3-sphere and 3-Sasakian sub-Riemannian space forms, \cite{Ro-Hu} and  \cite{Ro}. 
Also \cite{DGNnotable} and \cite{Da-Ga-Nh-Pa} are two interesting works related to the sub-Riemannian Bernstein problem in $\hh^1$.  In \cite{DGNnotable} the authors construct a family of complete area-stationary intrinsic graphs that are not stable and in \cite{Da-Ga-Nh-Pa} is shown that $C^2$ complete stable area-stationary Euclidean graphs with empty singular set must be vertical planes. Other remarkable work is \cite{Ba-SC-Vi}, where the first and the second variation formulas for intrinsic graphs are used to give a description of the horizontal entire minimal intrinsic graphs in $\hh^1$ and to show that the only stable ones are vertical planes. 
In \cite{CJHMY} and in \cite{Mo} the authors find general first and second variation formulas for surfaces inside pseudo-hermitian 3-manifolds and Carnot groups respectively. Finally in \cite{ChengHwang2nd} the authors present a definition of sub-Riemannian structure in terms of a metric, perhaps degenerate, defined on the cotangent bundle. In this way they can be able to unify the notions of area and mean curvature in Riemannian, pseudo-hermitian and contact sub-Riemannian geometries and to give a first variation formula  for an hypersurface in such a sub-Riemannian manifold.

The aim of this paper is generalize the first and the second variation formulas to general pseudo-hermitian manifolds in the spirit of \cite{Ri-Ro-Hu}, \cite{Ro-Hu}, \cite{Ri-Ro} and \cite{Ro}. We stress that with respect to the cited works we introduce some technical improvements. We use the pseudo-hermitian connection and the horizontal Jacobian, Lemma \ref{jacobianoorizzontale}, that allow us to simplify considerably some proofs, to obtains formulas with geometric terms more adapted to the pseudo-hermitian structure and to move surfaces  of class $\mc{C}^2_\hhh$ outside the singular set. We remark that the presence of a non-vanishing pseudo-hermitian torsion generates some non-trivial problems in the computation of the second variation formula and in its applications.

The work is organized as follows. 
In section two we introduce some notations and preliminaries. 

In section three we produce a first variation formula for $\mc{C}^2$ (eventually singular) surfaces. 

In section four we study special vector fields along characteristic curves, which are the generalization to Jacobi vector fields in Sasakian manifolds \cite[Section~3]{Ri-Ro} and \cite[Section~3]{Ro}. These vector fields will play a key role to prove instability results in section nine. 

In section five we study the local behavior of the singular set combining results of \cite{CJHMY} and \cite{Ga-Ri}. Moreover in Theorem \ref{genusbound} we prove that the genus of a closed, bounded mean curvature surface immersed in a contact sub-Riemannian manifold is less or equal then one.
Using the characterization given in \cite{Ga-Ri} of how the mean curvature change applying the Darboux's diffeomorphism, we can extent the same result proved in \cite{CJHMY} for  pseudo-hermitian manifolds.

In section six we produce a first variation formula for non-singular $\mc{C}^2_\hhh$ surfaces. We underline the interest to work with  $\mc{C}^2_\hhh$ surfaces, that are only $\mc{C}^1$ from the Euclidean point of view.  In sub-Riemannian manifolds there are examples of minimizers with low regularity, \cite{CHY} and \cite{Ri1}. 

In section seven we present  second variation formulas in the regular set $\s-\s_0$ or in a neighborhood of the singular set $\s_0$. These formulas allow us to construct two different stability operators in section eight. The first one is for $\mc{C}^2$ non-singular surfaces and coincides with the second variation formula in \cite{CJHMY} for $\mc{C}^3$ surfaces. The second stability operator we construct is for $\mc{C}^2$ (eventually singular) surfaces. Recently in \cite{ChengHwang2nd} the authors produce a second variational formula moving a singular lines, but we underline that they need $\mc{C}^3$ surfaces since in the proof they differentiate the mean curvature.  

In section nine we prove a necessary condition for $\mc{C}^2$ stable minimal surfaces with empty singular set in a large class of pseudo-hermitian manifolds, that included the uni-modular Lie groups, Proposition \ref{necessariastabilita}
\begin{quote}
\emph{Let $\Sigma$ be a $\mathcal{C}^2$ complete orientable surface with empty singular set immersed in a \phMM. We suppose that $g(R(T,Z)\nu_h,Z)-Z(g(\tau(Z),\nu_h))=0$ on $\s$ and the quantity $W-c_1g(\tau(Z),\nu_h)$ is constant along characteristic curves. We also assume that all characteristic curves in $\Sigma$ are either closed or non-closed. If $\Sigma$ is a stable minimal surface, then $W-c_1g(\tau(Z),\nu_h)\leq 0$ on $\s$. Moreover, if $W-c_1g(\tau(Z),\nu_h)=0$ then $\Sigma$ is a stable vertical surface.}
\end{quote}
This is an important class since in \cite{Pe} is shown that simply connected contact Riemannian 3-manifolds homogeneous in the sense of Bootby and Wang, \cite{MR0112160} (there exists a connected Lie group acting transitively as a group of contact diffeomorphisms), are Lie groups. The condition that we found involves the Webster scalar curvature $W$ and the pseudo-hermitian torsion $\tau$ of the manifold that are pseudo-hermitian invariants. 

Finally in section ten we apply previous results to the roto-traslation group $\mc{RT}$. We give a classification of area-stationary surfaces with non-empty singular set, Lemma \ref{stazionariopuntosingolare} and Lemma \ref{stazionariaconcurva}

\begin{quote}
\emph{Let $\s$ be a complete area-stationary surface of class $\mc{C}^2$  with non-empty singular set. Then $\s$ is a right-handed helicoid or a plane $\{(x,y,\theta)\in\mc{RT}:ax+by+c=0, a,b\in\rr,c\in\mathbb{S}^1\}$.}
\end{quote}

and the classification of complete stable surfaces, Theorem \ref{classificationstablesRT}
\begin{quote}\emph{Let $\s$ be a $\mc{C}^2$ stable, immersed, oriented and complete surface in $\mc{RT}$. Then}
\begin{enumerate}
\item \emph{if $\s$ is a non-singular surface, then it is a vertical plane;}
\item  \emph{if $\s$ is a surface with singular set, then it is a right-handed helicoid.}
\end{enumerate}
\end{quote}
The $\mc{RT}$ group is interesting for two reasons. From the geometric point of view it is one of the simplest pseudo-hermitian manifolds which does not have zero torsion.  Moreover it is a model of the visual cortex of the human eye which play an important role in the theory of image reconstruction, \cite{MR2235475} and \cite{MR2366414}. Given a boundary curve $\Ga$, we can reconstruct an image by solving a Plateau's problem. This is equivalent to find a stable minimal surface $\s$ with boundary $\Ga$, i.e. to find $\s$ such that $A'(\s)(0)=0$ and $A''(\s)(0)\geq 0$, for variations that fix $\partial\s=\Ga$.

\section{Preliminaries}

A three-dimensional contact manifold \cite{Bl} is a three-dimensional smooth manifold $M$ so that there exists a one-form $\omega$ such that $d\omega$ is non-degenerate when restricted to $\mathcal{H}:=ker(\omega)$. Since
\[
d\omega(X,Y)=X(\omega(Y))-Y(\omega(X))-\omega([X,Y])
\]
the \emph{horizontal distribution} $\mathcal{H}$ is completely non-integrable. It is well known the existence of a unique \emph{Reeb vector field} T in $M$ so that
\begin{equation}\label{def:reeb}
\omega(T)=1, \hspace{1cm} (\mathcal{L}_T \omega)(X)=0,
\end{equation}
where $\mathcal{L}$ is the Lie derivative and $X$ any smooth vector field on $M$. It is a direct consequence that $\omega\wedge d\omega$ is an orientation form on $M$. A well-known example of contact manifold is the Euclidean space $\rr^{3}$ with the contact one-form
\begin{equation}\label{1formacanonica}
\omega_0:=dt+xdy-ydx.
\end{equation}

A \emph{contact transformation} between contact manifolds is a diffeomorphism preserving the horizontal distribution. A \emph{strict contact transformation} is a diffeomorphism preserving the contact one-form.  A strict contact transformation preserves the Reeb vector field. Darboux's Theorem \cite[Theorem~3.1]{Bl} shows that, given a contact manifold $M$ and some point $p\in M$, there exists an open neighborhood $U$ of $p$ and a strict contact transformation $f$ from $U$ into a open set of $\rr^3$ with its standard contact structure induced by $\omega_0$. Such a local chart will be called a \emph{Darboux chart}. 

A positive definite metric $g_\mathcal{H}$ on $\mathcal{H}$ defines a \emph{contact sub-Riemannian} manifold $(M,g_{\mathcal{H}},\omega)$ on $M$ \cite{Mong}. The first Heisenberg group is the contact sub-Riemannian manifold $\hh^1\equiv(\rr^3,g_0,\omega_0)$, where $g_0$ is the Riemannian metric on $\hhh$ defined requiring that 
\[
X=\frac{\partial}{\partial x}+y\frac{\partial}{\partial t}, \hspace{1cm} Y=\frac{\partial}{\partial x}+y\frac{\partial}{\partial t}
\] 
form an orthonormal basis at each point.

The length of a piecewise horizontal curve $\ga:I\to M$ is~defined by
\[
L(\ga):=\int_I |\ga'(t)|\,dt,
\]
where the modulus is computed with respect to the metric $g_\mathcal{H}$. The Carnot-Carath\'eo\-do\-ry distance $d(p,q)$ between $p$, $q\in M$ is defined as the infimum of the lengths of piecewise smooth horizontal curves joining $p$ and $q$. A minimizing geodesic is any curve $\ga:I\to M$ such that $d(\ga(t),\ga(t'))=|t-t'|$ for each $t$, $t'\in I$.  From \cite[Chap.~5]{Mong} a minimizing geodesic in a contact sub-Riemannian manifold is a smooth curve that satisfies the geodesic equations, i.e., it is normal.

The metric $g_\mathcal{H}$ can be extended to a Riemannian metric $g$ on $M$ by requiring that $T$ be a unit vector orthogonal to $\mathcal{H}$. The scalar product of two vector fields $X$ and $Y$ with respect to the metric $g$ will be often denoted by $\escpr{X,Y}$ instead of $g(X,Y)$. The Levi-Civita connection induced by $g$ will be denoted by $D$. An important property of the metric $g$ is that the integral curves of the Reeb vector field $T$  are \emph{geodesics} \cite[Theorem~4.5]{Bl}. 

A usual class defined in contact geometry is the one of contact Riemannian manifolds, see \cite{Bl}, \cite{Ta}. Given a contact manifold, one can assure the existence of a Riemannian metric $g$ and an $(1,1)$-tensor field $J$ so that
\begin{equation}
\label{eq:contactriem}
g(T,X)=\omega(X), \quad 2g(X,J(Y))=d\omega(X,Y),\quad J^2(X)=-X+\omega(X)\,T.
\end{equation}
The structure given by $(M,\omega,g,J)$ is called a contact Riemannian manifold. The class of contact sub-Riemannian manifolds is different from this one. Recall that, in our definition, the metric $g_\mathcal{H}$ is given, and it is extended to a Riemannian metric $g$ in $TM$. However, there is not in general an $(1,1)$-tensor field $J$ satisfying all conditions in \eqref{eq:contactriem}. Observe that the second condition in \eqref{eq:contactriem} uniquely defines $J$ on $\mathcal{H}$, but this $J$ does not satisfy in general the third condition in \eqref{eq:contactriem}, as it is easily seen in $(\rr^3,\omega_0)$ choosing an appropriate  positive definite metric in $\text{ker}(\omega_0)$. When $M$ is three-dimensional the structure $(M, \omega, g, J)$ is equivalent to a strongly pseudo-convex pseudo-hermitian structure \cite[Corollary~6.4]{Bl} and we will call briefly \phM \,  a \emph{pseudo-hermitian manifold}. 

The Riemannian volume form $dv_g$ in $(M,g)$ is Popp's measure \cite[\S~10.6]{Mong}. The volume of a set $E\subset M$ with respect to the Riemannian metric $g$ will be denoted by $|E|$. 

A \emph{contact isometry} in $(M,g_\mathcal{H},\omega)$ is a strict contact transformation that preserves $g_\mathcal{H}$. Contact isometries preserve the Reeb vector fields and they are isometries of the Riemannian manifold $(M,g)$.

In a \conMM, we define a linear operator $J:\hhh\rightarrow \hhh$ on an orthonormal basis $\{X,Y\}$ of $\hhh$ respect to the metric $g_\hhh$ by 
\begin{equation}\label{def:J}
g(J(X),Y)=-g(J(Y),X)=sgn(c_1), \quad g(J(X),X)=g(J(Y),Y)=0,
\end{equation}
where  we have denoted $c_1(p)=-g([X,Y](p),T_p)$. We remark that $c_1(p)$ never vanish since $span\{X,Y\}=T M$ and $sgn(c_1)$ equals $1$ or $-1$ in the whole manifold. Furthermore $J$ can be extended to the whole tangent space by requiring $J(T)=0$. Now we define a connection $\n$, that we will call the \emph{(contact) sub-Riemannian connection},  as the unique connection having non-vanishing torsion defined by
\begin{equation}\label{def:torsion}
\tor(X,Y)=g(X,T)\tau(Y)-g(Y,T)\tau(X)+c_1 g(J(X),Y)T,
\end{equation}
where $\tau:TM\rightarrow \hhh$ is defined by 
\[
\tau(V)=-\frac{1}{2}J(\mathcal{L}_T J)(V)
\]
for all $V\in TM$. Clearly $\tau$ vanishes outside $\hhh$. Alternatively if we consider the endomorphism 
\[
\sigma(V):=D_V T:TM\rightarrow \hhh. 
\]
we have that 
\begin{equation}\label{def:sigma}
g(\sigma(V),Z)= g(\tau(V),Z)+\frac{c_1}{2}g(J(V),Z).
\end{equation}
Last equation can be viewed as an alternative definition of $J$ and $\tau$, where $J$ and $\tau$ are antisymmetric and symmetric respectively.
We shall call $\tau$ the \emph{(contact) sub-Riemannian torsion}. We remark that $\n$ and $\tau$ are generalization of the well-known pseudo-hermitian connection and pseudo-hermitian torsion in \phMM,\cite[Appendix]{CJHMY} and \cite{Dr-To}. From the above definitions follows easily
\begin{equation}\label{nablaT}
\nabla_V T=0,
\end{equation}
\begin{equation}\label{nablaJ}
(\nabla_V J)Z=0
\end{equation}
and
\begin{equation}\label{Jrotazione}
g(J(V),V)=0,
\end{equation}
for all $V,Z\in TM$. Here $J^2=-Id$ on $\hhh$ but satisfies the second equation in \eqref{eq:contactriem} if and only if $(M,g,J)$ is a pseudo-hermitian manifold. It implies the normalization $c_1=2$ and at my knowledge all definition of pseudo-hermitian manifolds and Riemannian contact manifolds require it implicitly. But exists interesting examples that do not satisfy $c_1=2$ as the roto-traslation group $RT$ that we will study in the last section.
The difference between Levi-Civita and pseudo-hermitian connections can be computed using the Koszul's formulas as in \cite[p.38]{Dr-To}
\begin{equation}\label{phvsLC}
2g(D_{X}Y-\n_{X}Y,Z)=g(Tor(X,Z),Y)+g(Tor(Y,Z),X)-g(Tor(X,Y),Z).
\end{equation}

In a \conMM  we can generalize the definition of the \emph{Webster scalar curvature} known in \phMM    by
\begin{equation}\label{def:W}
W:=-g(R(X,Y)Y,X), 
\end{equation}
where $\{X,Y\}$ is an orthonormal basis of $\hhh$ and $R$ is the pseudo-hermitian curvature tensor defined by
\begin{equation}
R(Z,W)V=\n_W \n_Z V-\n_Z\n_W V+\n_{[Z,W]}V,
\end{equation}
for all $Z,W,V\in TM$.

In the following we restrict ourself to the case in which $c_1$ is a constant. We briefly call a such manifold a pseudo-hermitian 3-manifold $(M,g_\hhh,\omega,J)$, as it has analogous properties respect to a pseudo-hermitian manifold defined in \cite[Appendix]{CJHMY} and \cite{Dr-To}.

\section{The first variation formula for $\mc{C}^2$ surfaces.}

We define the \emph{area} of a $\mathcal{C}^1$ surface $\Si$ immersed in $M$ by
\begin{equation}\label{eq:area}
A(\Si)=\int\limits_{\Si} |N_h| d\Si,
\end{equation}
where $N$ is the unit normal vector with respect to the metric $g$, $N_h$ is the orthogonal projection of $N$ to $\mathcal{H}$  and $d\Si$ is the Riemannian area element of $\Si$. The singular set $\Si_0$ consists of those points $p$ where $\hhh_p$ coincides with the tangent plane $T_p\Si$ of $M$.  We define the \emph{horizontal unit normal vector} $\nu_h(p)$ and the \emph{characteristic vector field} $Z(p)$ by
\begin{equation}\label{def:Znu_h}
\nu_h(p):=\frac{N_h(p)}{|N_h(p)|}, \quad Z(p):=J(\nu_h)(p)
\end{equation}
for all $p\in\Si-\Si_0$. Since $Z_p$ is orthogonal to $\nu_h$ and horizontal, we get that $Z_p$ is tangent to $\s$ and generates $T_p\s\cap\hhh_p$. We call \emph{characteristic curves} of $\s$ the integral curves of $Z$ in $\s-\s_0$. Now setting
\begin{equation}\label{def:S}
S:=g(N,T)\nu_{h}-|N_{h}|T
\end{equation}
we get that $\{Z_p,S_p\}$ is an orthonormal basis of $T_{p}\s$ for $p\in\s -\s_{0}$. 

Now we consider a $\mathcal{C}^1$ vector field $U$ with compact support on $\Si$ and denote by $\Si_t$ the variation of $\Si$ induced by $U$, i.e., $\Si_t=\{exp_p(tU_p)|p\in\Si\}$, where $exp$ is the exponential map of $M$ with respect to $g$. Furthermore we denote by $B$ the Riemannian shape operator and by $\theta$ the 1-form associated to the connection $\n$ and $\nu_h$
 \begin{equation}\label{def:theta}
 \theta(v):=g(\n_{v}\nu_{h},Z),
 \end{equation}
for all $v\in T_p M$.

\begin{lemma}\label{conti} Let $\s$ be an oriented immersed $\mathcal{C}^2$ surface in a contact sub-Riemannian three-dimensional manifold $(M,g_\hhh,\omega)$.  Consider a point $p\in\s -\s_{0}$, the horizontal Gauss map $\nu_{h}$ and the basis $\{Z,S\}$ of $T_{p}M$ already defined. For any $v\in T_{p}M$ we have
\begin{enumerate}
\item[(i)] $|N_{h}|Z(|N_{h}|)=-g(N,T)Z(g(N,T))$;
\item[(ii)] $|N_{h}|^{-1}Z(g(N,T))=|N_{h}|Z(g(N,T))-g(N,T)Z(|N_{h}|)$;
\item[(iii)] $g(B(Z),S)=\frac{c_1}{2}-g(\tau(Z),\nu_h)+|N_{h}|^{-1}Z(g(N,T))=-g(\sigma(Z),\nu_h)+|N_h|^{-1}Z(g(N,T))$;
\item[(iv)] $g(B(S),Z)=-g(N,T)^2g(\tau(\nu_h),Z)+\frac{c_1}{2}(|N_h|^2-g(N,T)^2)-|N_h|\theta(S)$;
\item[(v)] $|N_{h}|^{-1}Z(g(N,T))=-c_1g(N,T)^2+|N_h|^2g(\tau(Z),\nu_h)-|N_h|\theta(S)$.
\end{enumerate}
\end{lemma}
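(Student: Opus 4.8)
The plan is to work throughout with the orthogonal decomposition $N=|N_h|\,\nu_h+g(N,T)\,T$ of the Riemannian unit normal and with the normalization $|N_h|^2+g(N,T)^2=1$ expressing that $N$ is a unit vector. Differentiating the latter along the characteristic field $Z$ yields $|N_h|Z(|N_h|)+g(N,T)Z(g(N,T))=0$, which is precisely $(i)$. Then $(ii)$ follows by pure algebra: inserting $Z(|N_h|)=-g(N,T)|N_h|^{-1}Z(g(N,T))$ from $(i)$ into the right-hand side of $(ii)$ and using $|N_h|^2+g(N,T)^2=1$ to collapse the coefficient to $|N_h|^{-1}$.

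For $(iii)$ I adopt the Weingarten sign convention $B(v)=-D_vN$, so that $g(B(Z),S)=-g(D_ZN,S)$, and I expand $D_ZN$ by the Leibniz rule on the decomposition of $N$. The vertical derivative is handled by $D_vT=\sigma(v)$, and the derivative of $|\nu_h|=1$ contributes nothing since $g(\nu_h,D_Z\nu_h)=0$; what remains is a $g(\sigma(Z),\nu_h)$ term together with the combination $|N_h|Z(g(N,T))-g(N,T)Z(|N_h|)$. This combination is exactly the right-hand side of $(ii)$, hence equals $|N_h|^{-1}Z(g(N,T))$, giving the second displayed form. The two expressions in $(iii)$ are then identified through \eqref{def:sigma} and the observation $J(Z)=J^2(\nu_h)=-\nu_h$, which yields $g(\sigma(Z),\nu_h)=g(\tau(Z),\nu_h)-\frac{c_1}{2}$.

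The computation of $(iv)$ is where the real work lies and is the step I expect to be the main obstacle. Writing $g(B(S),Z)=-g(D_SN,Z)$ and expanding, the horizontal part produces $|N_h|\,g(D_S\nu_h,Z)$, and the crux is to pass from the Levi-Civita derivative $g(D_S\nu_h,Z)$ to the sub-Riemannian one $\theta(S)=g(\n_S\nu_h,Z)$. I would do this with the Koszul-type identity \eqref{phvsLC} applied to the triple $(S,\nu_h,Z)$, which expresses the discrepancy through three torsion evaluations of \eqref{def:torsion}. Using $g(S,T)=-|N_h|$, $g(Z,T)=0$, $J(\nu_h)=Z$ and $J(T)=0$, these reduce to $-|N_h|g(\tau(Z),\nu_h)$, $-c_1|N_h|$ and $-|N_h|g(\tau(\nu_h),Z)$; the first and third cancel because $\tau$ is symmetric, cf. \eqref{def:sigma}, leaving $g(D_S\nu_h,Z)=\theta(S)-\frac{c_1}{2}|N_h|$. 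Combining this with the vertical contribution $g(N,T)\,g(\sigma(S),Z)$, evaluated from $\tau(S)=g(N,T)\tau(\nu_h)$ and $J(S)=g(N,T)Z$, and collecting the $c_1$ terms gives $(iv)$.

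Finally $(v)$ comes essentially for free from the self-adjointness of the Riemannian shape operator, $g(B(Z),S)=g(B(S),Z)$: I equate the two expressions obtained in $(iii)$ and $(iv)$, use once more the symmetry $g(\tau(\nu_h),Z)=g(\tau(Z),\nu_h)$, and solve for $|N_h|^{-1}Z(g(N,T))$. The final simplification uses $1-g(N,T)^2=|N_h|^2$ to produce the coefficient $|N_h|^2$ in front of $g(\tau(Z),\nu_h)$ and $|N_h|^2-g(N,T)^2-1=-2g(N,T)^2$ to produce the term $-c_1g(N,T)^2$.
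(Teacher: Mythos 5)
Your proof is correct and follows essentially the same route as the paper's: (i) from differentiating $|N|^2=1$, (ii) by algebra, (iii) and (iv) by expanding $D_ZN$ and $D_SN$ through $N=|N_h|\nu_h+g(N,T)T$ with $\sigma=DT$ and the Koszul identity \eqref{phvsLC}, and (v) from the symmetry of $B$. Your explicit choice $B(v)=-D_vN$ is the convention that makes the stated signs come out, and your intermediate torsion computations check out.
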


\begin{proof} From $Z(|N_{h}|^2)=Z(1-g(N,T)^2)$ we immediately obtain (i). Using (i) and $|N|=1$ we get
\begin{align*}
|N_{h}|Z(g(N,T))-g(N,T)Z(|N_{h}|)&=(|N_h|+|N_h|^{-1}g(N,T))Z(g(N,T))\\
&=|N_h|^{-1}Z(g(N,T))
\end{align*}
which proves (ii). From $N=g(N,T)T+|N_h|\nu_h$, \eqref{def:sigma} and \eqref{def:S} we have
\begin{align*}
g(D_{Z}N,S)&=Z(g(N,T))g(T,S)+g(N,T)g(\sigma(Z),S)+Z(|N_h|)g(\nu_h,S)+|N_h|g(D_Z \nu_h,S)\\
&=g((c_1/2)J(Z)+\tau(Z),\nu_h)+|N_{h}|Z(g(N,T))-g(N,T)Z(|N_{h}|),
\end{align*}
where we have used 
\[
g(D_Z \nu_h,S)=-|N_h|g(D_Z \nu_h,T)=|N_h|g(\sigma(Z),\nu_h).
\]
Now from (ii) we get (iii). On the other hand 
\begin{align*}
g(D_{S}N,Z)&=g(N,T)g(\sigma(S),Z)+|N_h|g(D_S \nu_h,Z),
\end{align*}
by \eqref{phvsLC} and \eqref{def:S} we obtain (iv).
Finally we get (v) subtracting (iii) and (iv) since $g(B(Z),S)-g(B(S),Z)=0$. 
\end{proof}

The next lemma is proved in \cite{Rireach} for the Heisenberg group $\mathbb{H}^n$, but it holds in a general \conMM with the same proof.

\begin{lemma}\label{jacobianoorizzontale}  Let $\s$ be a $\mc{C}^1$ surface in $M$, $p\in\s$ and $\{E_1,E_2\}$ any basis of $T_{p}\s$. Then we have
\begin{equation*}
|N_{h}|(p)=\frac{|v_{p}|}{G(E_1,E_2)^{1/2}},
\end{equation*}
where $v_{p}:=g(T_{p},E_1)E_2-g(T_{p},E_2)E_1$ and $G(E_1,E_2)$ is the Gram determinant of $\{E_1,E_2\}$.
\end{lemma}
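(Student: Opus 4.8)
The plan is to express the horizontal area density $|N_h|$ in purely tangential terms via the Gram determinant, so that no explicit knowledge of the ambient normal $N$ is required. The starting point is the elementary observation that on a surface $\s$ immersed in the three-dimensional manifold $M$, the unit normal $N$ can be recovered up to sign from any basis $\{E_1,E_2\}$ of the tangent plane $T_p\s$ by taking a Riemannian cross product. First I would fix $p\in\s$ and the basis $\{E_1,E_2\}$, and introduce the vector $v_p:=g(T_p,E_1)E_2-g(T_p,E_2)E_1$, which lies in $T_p\s$ and is, geometrically, the component of the Reeb field $T$ measured against the tangent frame. The key point to unravel is that $|v_p|$ encodes precisely the size of the tangential projection of $T$.

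Second, I would recall that by definition \eqref{def:S} and the decomposition $N=g(N,T)T+|N_h|\nu_h$, the quantity $|N_h|$ equals $\sin$ of the angle between $T_p\s$ and the horizontal plane $\hhh_p$; equivalently $|N_h|^2=1-g(N,T)^2$ as already used in Lemma \ref{conti}. The cleanest route is to compute the orthogonal projection $T^\top$ of $T$ onto $T_p\s$. Writing $T=T^\top+g(N,T)N$, one has $|T^\top|^2=1-g(N,T)^2=|N_h|^2$. So it suffices to show that $|T^\top|=|v_p|/G(E_1,E_2)^{1/2}$, which reduces the whole statement to a linear-algebra identity about the projection of a fixed vector onto a plane spanned by an arbitrary (not necessarily orthonormal) basis.

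Third, I would carry out that linear-algebra step directly. Expanding $T^\top=aE_1+bE_2$ and solving the two equations $g(T,E_i)=a\,g(E_1,E_i)+b\,g(E_2,E_i)$ by Cramer's rule against the Gram matrix $\big(g(E_i,E_j)\big)$, whose determinant is $G(E_1,E_2)$, gives the coefficients $a,b$ explicitly. Substituting back and simplifying $|T^\top|^2=a^2g(E_1,E_1)+2ab\,g(E_1,E_2)+b^2g(E_2,E_2)$, the numerator collapses to $|v_p|^2/G(E_1,E_2)$, where $|v_p|^2=g(T,E_1)^2g(E_2,E_2)-2g(T,E_1)g(T,E_2)g(E_1,E_2)+g(T,E_2)^2g(E_1,E_1)$. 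Taking square roots yields the claim.

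I expect the only genuinely delicate point to be bookkeeping in the Cramer's-rule simplification, rather than any conceptual obstacle: one must verify that the messy expression for $|T^\top|^2$ factors exactly as $|v_p|^2/G(E_1,E_2)$, and that the sign ambiguity in $N$ is irrelevant since only $g(N,T)^2$ enters. Because the identity is pointwise and tangential, the fact that the proof was originally written for $\hhn$ transfers verbatim to a general \conMM: nothing beyond the Riemannian metric $g$, the Reeb field $T$, and the immersion is used, so there is no dependence on the special structure of the Heisenberg group.
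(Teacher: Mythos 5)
Your argument is correct, and it reaches the paper's formula by a slightly different decomposition that is worth noting. The paper expands the horizontal part of the normal in the mixed frame, writing $N_{h}=\lambda E_1+\mu E_2+|N_{h}|^2 N$, solves for $\lambda,\mu$ by Cramer's rule against the Gram matrix using $g(N_h,E_i)=-g(N,T)g(T,E_i)$, and then substitutes into $|N_{h}|^2 g(N,T)=-(\lambda g(E_1,T)+\mu g(E_2,T))$; this forces a division by $g(N,T)$, so the case $g(N,T)=0$ must be handled separately by a direct check in an orthonormal frame $\{w,T\}$ of $T_p\s$. You instead project the Reeb field onto the tangent plane, $T=T^{\top}+g(N,T)N$, use $|T^{\top}|^2=1-g(N,T)^2=|N_h|^2$, and run the same Cramer's-rule computation on $T^{\top}=aE_1+bE_2$; the identity $|T^{\top}|^2=a\,g(T,E_1)+b\,g(T,E_2)=|v_p|^2/G(E_1,E_2)$ then follows with no division by $g(N,T)$ and hence no case distinction. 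The core linear algebra (Gram matrix inversion producing exactly $|v_p|^2/G$) is identical in both proofs, but your choice of which vector to decompose makes the degenerate case $g(N,T)=0$ disappear, which is a genuine (if small) simplification. Your closing remark is also accurate: nothing beyond $g$, $T$ and the immersion enters, which is precisely why the paper can import the Heisenberg-group proof verbatim into a general contact sub-Riemannian 3-manifold.
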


\begin{proof} We consider 
\begin{equation}\label{lemmaiacor}
N_{h}=\lambda E_1+\mu E_2+|N_{h}|^2 N
\end{equation}
so that $|N_{h}|^2g(N,T) =-(\lambda g(E_1,T)+\mu g(E_2,T))$. From $N_{h}=N-g(N,T)T$ we have 
\begin{equation}\label{nonso}
g(N_{h},E_{i})=-g(N,T)g(T,E_{i}),i=1,2.
\end{equation}  
Now compute $\lambda$ and $\mu$ taking scalar product in \eqref{lemmaiacor} with $E_1$ and $E_2$ in (\ref{lemmaiacor}) and using \eqref{nonso}  we have

\begin{displaymath} 
\left( \begin{array}{ccc} 
\lambda \\
\mu 
\end{array} \right) =
\frac{-g(N,T)}{G(E_1,E_2)} \left( \begin{array}{ccc} 
g(E_2,E_2) & -g(E_1,E_2) \\
-g(E_1,E_2) & g(E_1,E_1) 
\end{array} \right) 
\left( \begin{array}{ccc} 
g(T,E_1)\\
g(T,E_2)
\end{array} \right).
\end{displaymath} 
Hence we have obtained 
\begin{equation*}
|N_{h}|^2 g(N,T)=\frac{g(N,T)}{G(E_1,E_2)}|v|^2
\end{equation*}
which prove the statement  in the case $g(N,T)\neq 0$. If $g(N,T)=0$ we simply check that $|v|^2=G(E_1,E_2)^2$, writing $E_1$ and $E_2$ in term of an orthonormal basis $\{w,T\}$ of $T_{p}\s$.
\end{proof}

Now we introduce the notion of intrinsic regularity, \cite{FSSC}, \cite{FSSC2} and \cite{FSSC3}. Let $\Omega$ be an open subset of  $M$, we say $f:\Omega \rightarrow \mathbb{R}$ of class $\mathcal{C}^1_{\hhh}$ in $\Omega$ when $Xf$ exists and it is continuous for any $X\in \hhh$. We define $f\in \mathcal{C}^k_{\hhh}(\Omega)$ when $Xf\in\mathcal{C}^{k-1}_{\hhh}(\Omega)$ for all $X\in \hhh$. Since $c_1$ is  a real constant immediately we obtain that $f\in\mathcal{C}^{2k}_{\hhh}(\Omega)$ implies $f\in\mathcal{C}^{k}(\Omega)$.  
We define a surface $\s$ a \emph{$\hhh$-regular surface} of class $\mc{C}^k_\hhh$ if for any $p\in \s$ exist $B_r(p)$, a metric ball of radius $r$ centered in $p$, and a function $f\in \mc{C}^k_\hhh$ such that
\[ 
\s\cap B_r(p)=\{p\in B_r(p): f(p)=0, \n_\hhh f(p)\neq 0\}, 
\]
see \cite{FSSC} for the definition in the Heisenberg group.

\begin{lemma}\label{devergenceteorem}
Let $\s$ be an oriented immersed $\mathcal{C}^2$ surface in a contact sub-Riemannian three-dimensional manifold $(M,g_\hhh,\omega)$  and let $f\in\mathcal{C}^1 (M)$. Then we have
\begin{equation*}
div_{\s}(fS)=S(f)+fg(N,T)\theta(Z)-f|N_{h}|g(\tau(Z),Z),
\end{equation*}
and
\begin{equation*}
\begin{aligned}
div_{\s}(f Z) =Z(f)-fg(N,T)\theta(S)+fg(N,T)|N_{h}|g(\tau(\nu_h),Z)+c_1fg(N,T)|N_h|g(J(\nu_h),Z),
\end{aligned}
\end{equation*} 
where $div_\Sigma$ is the Riemannian divergence with respect to an orthonormal basis of $T\Sigma$.

\end{lemma}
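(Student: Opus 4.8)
The plan is to compute both divergences directly from the definition of the Riemannian divergence on $\s$. Since $\{Z,S\}$ is an orthonormal basis of $T_p\s$ and both $fS$ and $fZ$ are tangent to $\s$, we have $\di(W)=g(D_Z W,Z)+g(D_S W,S)$, with $D$ the Levi--Civita connection. Expanding $W=fS$ by the Leibniz rule, the cross term $Z(f)g(S,Z)$ and the self term $fg(D_S S,S)=\frac{f}{2}S(|S|^2)$ both vanish by orthonormality, so that $\di(fS)=S(f)+fg(D_Z S,Z)$; symmetrically $\di(fZ)=Z(f)+fg(D_S Z,S)$. Hence the whole lemma reduces to identifying the two scalars $g(D_Z S,Z)$ and $g(D_S Z,S)$ with the geometric expressions in the statement.

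For the first, I would substitute $S=g(N,T)\nu_h-|N_h|T$ from \eqref{def:S}, apply Leibniz, and drop the terms carrying $Z(g(N,T))$ and $Z(|N_h|)$ because they are paired against $g(\nu_h,Z)=0$ and $g(T,Z)=0$. This leaves $g(D_Z S,Z)=g(N,T)g(D_Z\nu_h,Z)-|N_h|g(D_Z T,Z)$. The second term is handled by $D_Z T=\sigma(Z)$ together with \eqref{def:sigma} and $g(J(Z),Z)=0$ from \eqref{Jrotazione}, giving $g(D_Z T,Z)=g(\tau(Z),Z)$. For the first term I pass from $D$ to the sub-Riemannian connection $\n$ via \eqref{phvsLC}; the antisymmetry of the torsion collapses the right-hand side to $-g(\tor(Z,\nu_h),Z)$, and since $Z,\nu_h$ are horizontal, \eqref{def:torsion} shows that $\tor(Z,\nu_h)=c_1 g(J(Z),\nu_h)T$ is a multiple of $T$, hence orthogonal to $Z$. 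Thus $g(D_Z\nu_h,Z)=\theta(Z)$ by \eqref{def:theta}, and the first formula follows.

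For the second scalar, which is the more delicate computation, I again convert via \eqref{phvsLC} to obtain $g(D_S Z,S)=g(\n_S Z,S)-g(\tor(S,Z),S)$. The connection term is computed using the compatibility of $\n$ with $J$ and $T$: since $\n J=0$ by \eqref{nablaJ}, $\n_S Z=J(\n_S\nu_h)$, and since $\n$ is metric with $\n_S T=0$ from \eqref{nablaT}, the vector $\n_S\nu_h$ is orthogonal to both $\nu_h$ and $T$, hence equal to $\theta(S)Z$; therefore $\n_S Z=-\theta(S)\nu_h$ and, using $g(\nu_h,S)=g(N,T)$, we get $g(\n_S Z,S)=-g(N,T)\theta(S)$. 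The torsion term is evaluated from \eqref{def:torsion} using $g(S,T)=-|N_h|$, $J(S)=g(N,T)Z$ (as $J(T)=0$ and $J(\nu_h)=Z$) and $\tau(T)=0$; collecting these yields $g(\tor(S,Z),S)=-|N_h|g(N,T)g(\tau(Z),\nu_h)-c_1|N_h|g(N,T)$. Putting the two pieces together, and finally rewriting $g(\tau(Z),\nu_h)=g(\tau(\nu_h),Z)$ by symmetry of $\tau$ and $1=g(J(\nu_h),Z)$, reproduces the second formula.

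The main obstacle is the bookkeeping in this last step: the conversion between $D$ and $\n$ through the torsion, and the explicit identification of $\n_S\nu_h$, must be carried out carefully, tracking which torsion contributions survive the horizontality of $Z,\nu_h$ and the specific form of $S$. This is also the point where the constant $c_1$ and all the signs enter, so it is where a computational error is most likely; the first formula, by contrast, is essentially immediate once the observation $\tor(Z,\nu_h)\parallel T$ is made.
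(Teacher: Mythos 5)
Your proposal is correct and follows essentially the same route as the paper: reduce each divergence to $S(f)+fg(D_ZS,Z)$ resp.\ $Z(f)+fg(D_SZ,S)$ via the orthonormal frame $\{Z,S\}$, insert the decomposition \eqref{def:S}, and compare $D$ with $\n$ through \eqref{phvsLC} and the torsion \eqref{def:torsion}. The paper's proof is just a terser version of the same computation, and your sign and $c_1$ bookkeeping in the second formula checks out (in particular $g(\tor(S,Z),S)=-|N_h|g(N,T)g(\tau(Z),\nu_h)-c_1|N_h|g(N,T)$ is right).
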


\begin{proof} We have 
\[div_{\s}(fZ)=Z(f)+fg(D_{S}Z,S)
\] 
and by \eqref{def:S}
\begin{equation*}
g(D_{S}Z,S)=g(N,T)g(D_{S}Z,\nu_{h})-g(N,T)|N_{h}|g(D_{\nu_{h}}Z,T)
\end{equation*}
and using \eqref{phvsLC} we prove the second equation. For the first one we note that
\[
div_{\s}(fS)=S(f)+fg(D_{Z}S,Z)
\] 
and we can conclude using
\[
g(D_{Z}S,Z)=g(N,T)g(D_{Z}\nu_{h},Z)-|N_{h}|g(D_{Z}T,Z)
\] 
together with \ref{phvsLC}.
\end{proof}

Now we can present the key Lemma to obtain the first variation.

\begin{lemma}
Let $\s$ be an oriented immersed $\mc{C}^1$ surface in a contact sub-Riemannian three-dimensional manifold $(M,g_\hhh,\omega)$. Then the first variation of the sub-Riemannian area induced by the  vector field $U$, that is $\mathcal{C}^1_{0}(\s)$ along the variation, is given by
\begin{equation}\label{var}
\begin{aligned}
\frac{d}{ds}\Big{|}_{s=0} A(\varphi_{s}(\s))=\m_{\s-\s_0}\{ & -S(g(U,T))+c_1g(N,T)g(J(\nu_h),U_h)\\
&+|N_{h}|g(\n_{Z}U_h,Z)+|N_{h}|g(U,T)g(\tau(Z),Z)  \}d\s.
\end{aligned}
\end{equation}

\end{lemma}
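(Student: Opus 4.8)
The plan is to reduce the area to an integral over a \emph{fixed} parameter domain by means of the horizontal Jacobian of Lemma~\ref{jacobianoorizzontale}, and then to differentiate under the integral sign. First I would realize the variation as a map $F:\s\times(-\eps,\eps)\to M$, $F(p,s)=\exp_p(sU_p)$, fix a local coordinate frame $\{e_1,e_2\}$ on $\s$ with $[e_1,e_2]=0$, and set $E_i(s):=F_*(e_i)$ and $v(s):=g(T,E_1(s))E_2(s)-g(T,E_2(s))E_1(s)$ as vector fields along $F$. By Lemma~\ref{jacobianoorizzontale} the Riemannian area element of $\varphi_s(\s)$ is $G(E_1(s),E_2(s))^{1/2}\,du\,dv$, while the factor $|N_h|$ carries exactly the reciprocal Gram determinant; the two cancel, leaving
\begin{equation*}
A(\varphi_s(\s))=\int_{\s} |v(s)|\,du\,dv
\end{equation*}
over the fixed domain. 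This cancellation is precisely the simplification the horizontal Jacobian is designed to provide.

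Next I would differentiate under the integral. Since $\n$ is compatible with $g$, one has $\partial_s|v|=g(\n_{\partial_s}v,v)/|v|$, so it suffices to evaluate $g(\n_{\partial_s}v,v)|_{s=0}$. Because the resulting integrand is a function on $\s$ and both $\n_{e_i}U$ and $\tor(U,e_i)$ are tensorial in $e_i$, I am free to choose, at each $p\in\s-\s_0$, the coordinate frame with $e_1|_p=Z_p$ and $e_2|_p=S_p$. Then $G=1$, and from $g(T,Z)=0$ and $g(T,S)=-|N_h|$ one gets $v|_{s=0}=|N_h|\,Z$, so the integrand reduces to $g(\n_{\partial_s}v,Z)|_{s=0}$.

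To compute this I would use the torsion-corrected commutation rule $\n_{\partial_s}E_i|_{s=0}=\n_{e_i}U+\tor(U,e_i)$ (the analogue of the symmetry of second derivatives for a connection with torsion), together with $\n_VT=0$ from \eqref{nablaT}, which gives $\partial_s\,g(T,E_i)=g(T,\n_{\partial_s}E_i)$. Expanding $\n_{\partial_s}v$ and pairing with $Z$ in the orthonormal basis $\{Z,S\}$ collapses everything to
\begin{equation*}
g(\n_{\partial_s}v,Z)\big|_{s=0}=-g\big(T,\n_SU+\tor(U,S)\big)+|N_h|\,g\big(\n_ZU+\tor(U,Z),Z\big).
\end{equation*}
Here $-g(T,\n_SU)=-S(g(U,T))$ by $\n_ST=0$; the term $|N_h|\,g(\n_ZU,Z)$ becomes $|N_h|\,g(\n_ZU_h,Z)$ after discarding the vertical part via $g(T,Z)=0$; and inserting the torsion formula \eqref{def:torsion}, using $J(T)=0$, the vanishing of $\tau$ off $\hhh$, the antisymmetry of $J$, and the explicit form of $S$, the two torsion brackets turn into $c_1g(N,T)g(J(\nu_h),U_h)$ and $|N_h|g(U,T)g(\tau(Z),Z)$ respectively. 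Collecting the four contributions yields \eqref{var}.

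The main obstacle is the correct bookkeeping of the non-vanishing torsion: the rule $\n_{\partial_s}E_i-\n_{e_i}(F_*\partial_s)=\tor(F_*\partial_s,E_i)$ replaces the usual symmetry lemma and is the source of the two genuinely sub-Riemannian terms, so the torsion identity \eqref{def:torsion} must be applied carefully to $\tor(U,S)$ and $\tor(U,Z)$. One should also justify the pointwise adaptation of the frame and invoke the metric compatibility of $\n$; if one prefers to avoid the latter, the same computation can be run with the Levi--Civita connection $D$, using its torsion-free symmetry lemma, and then converted to $\n$ through \eqref{phvsLC}, which reproduces the torsion terms identically.
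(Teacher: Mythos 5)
Your proposal is correct and follows essentially the same route as the paper: both use flow-invariant extensions of $\{Z,S\}$ together with Lemma \ref{jacobianoorizzontale} to rewrite the area as $\int|V(s)|$ over a fixed domain, differentiate under the integral using $V(0)=|N_h|Z$, and convert $\n_U E_i$ into $\n_{E_i}U+\tor(U,E_i)$ via $[E_i,U]=0$, with the torsion formula \eqref{def:torsion} producing the terms $c_1g(N,T)g(J(\nu_h),U_h)$ and $|N_h|g(U,T)g(\tau(Z),Z)$. The bookkeeping of the individual contributions matches the paper's computation.
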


\begin{proof} For every $p\in\s$ and the orthonormal basis $\{Z,S\}$ of $T_{p}\s$, we consider extensions $E_{1}(s),E_{2}(s)$ of  $Z,S$ along the curve $s\mapsto \varphi_{s}(p)$ so that
$[E_{i},U]=0$, i.e., the vector fields $E_{i}$ are invariant under the flow generated by $U$. By Lemma \ref{jacobianoorizzontale} the Jacobian of the map $\varphi_{s}$ at $p$ is given by $G(E_{1}(s),E_{2}(s))^{1/2}$. We get

\begin{equation*}
\begin{aligned}
A(\varphi_{s}(\s))&= \m_{\Sigma} |V(s)|d\Si,
\end{aligned}
\end{equation*}
where $V(s):=g(T,E_{1}(s))E_{2}(s)-g(T,E_{2}(s))E_{1}(s)$. We can express the first derivative of the area as
\begin{equation*}
\frac{d}{ds}\Big{|}_{s=0} A(\varphi_{s}(\s-\s_0))=\m_{\s} \frac{g(\n_{U}V,V(0))}{|V(0)|}d\s.
\end{equation*}
Now $g(T,E_1(0))=0$ and $g(T,E_2(s))=-|N_h|$ imply $V(0)=|N_{h}|Z$. Since $[E_{i},U]=0$ and \eqref{nablaT} we have
\begin{equation*}
\frac{g(D_{U}V,V(0))}{|V(0)|}=-U(g(E_{2},T))+|N_{h}|g(\n_{U}E_{1},Z)=-(g(\n_U E_{2},T))+|N_{h}|g(\n_{U}E_{1},Z)
\end{equation*}
and substituting we obtain
\begin{equation*}
\frac{d}{ds}\Big{|}_{s=0} A(\varphi_{s}(\s))=\m_{\s-\s_0}\{-g(\n_{U}E_{2},T)+|N_{h}|g(\n_{U}E_{1},Z)\}d\s.
\end{equation*}
Finally since
\begin{equation*}
\begin{aligned}
g(\n_U E_{2},T))&=g(\n_{S} U,T)-c_1g(N,T)g(J(\nu_h),U)\\
&=S(g(U,T))-c_1g(N,T)g(J(\nu_h),U)
\end{aligned}
\end{equation*}
and from
\begin{equation*}
\begin{aligned}
g(\n_{U}E_{1},E_{1})&=g(\n_{Z}U_h, Z)+g(U,T)g(\tau(Z),Z)
\end{aligned}
\end{equation*}
we get \eqref{var}.

\end{proof}

Now we are able to get variation formulas in generic directions.


\begin{corollary}\label{1var:tang} Let $\s$ be an oriented immersed $\mathcal{C}^2$ surface in a contact sub-Riemannian three-dimensional manifold $(M,g_\hhh,\omega)$. Then the first variation of the area induced by the tangent vector field  $U=lZ+hS$, with $l,h\in\mathcal{C}^1_{0}(\s-\s_0)$, is 
\begin{equation*}
\frac{d}{ds}\Big{|}_{s=0} A(\varphi_{s}(\s))=\m_{\s} \divv_{\s}(|N_{h}|U))  d\s.
\end{equation*}
Furthermore when $\partial\s=\emptyset$ the above term vanishes.
\end{corollary}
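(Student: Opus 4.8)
The plan is to reduce everything to the general first variation formula \eqref{var} by inserting the tangential field $U=lZ+hS$ and then recognizing the resulting integrand as $\divv_\s(|N_h|U)$ by means of the divergence identities of Lemma \ref{devergenceteorem}. I would \emph{not} invoke the soft argument that tangential flows preserve $\s$, since the corollary asks for the explicit divergence form of the integrand, not merely that it be a boundary term.

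First I would record the algebraic consequences of $U$ being tangential. Because $Z$ is horizontal and $S=g(N,T)\nu_h-|N_h|T$, one gets $g(U,T)=-h|N_h|$ and $U_h=lZ+hg(N,T)\nu_h$. Since $Z=J(\nu_h)$ and $\{Z,\nu_h\}$ is orthonormal, $g(J(\nu_h),U_h)=l$ and $g(J(\nu_h),Z)=1$. Using $\n T=0$, compatibility of $\n$ with $g$, and the definition $\theta(\cdot)=g(\n_{\cdot}\nu_h,Z)$, a short computation gives $g(\n_Z U_h,Z)=Z(l)+hg(N,T)\theta(Z)$. Substituting these into \eqref{var} and expanding $-S(g(U,T))=S(h|N_h|)$ by Leibniz collapses the integrand to
\[
I=|N_h|Z(l)+|N_h|S(h)+hS(|N_h|)+c_1lg(N,T)+|N_h|hg(N,T)\theta(Z)-h|N_h|^2g(\tau(Z),Z).
\]

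Next I would expand the target $\divv_\s(|N_h|U)=\divv_\s(|N_h|l\,Z)+\divv_\s(|N_h|h\,S)$ by applying Lemma \ref{devergenceteorem} with $f=|N_h|l$ and $f=|N_h|h$ respectively, using $g(J(\nu_h),Z)=1$. After the Leibniz rule is applied to $Z(|N_h|l)$ and $S(|N_h|h)$, every term of $I$ reappears verbatim except for a residual multiple of $l$; cancelling $l$, what remains to prove is the pointwise identity
\[
c_1g(N,T)=Z(|N_h|)-|N_h|g(N,T)\theta(S)+|N_h|^2g(N,T)g(\tau(\nu_h),Z)+c_1|N_h|^2g(N,T).
\]
This is where the real content sits, and I expect it to be the main obstacle. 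I would establish it from Lemma \ref{conti}: part (i) rewrites $Z(|N_h|)=-g(N,T)|N_h|^{-1}Z(g(N,T))$, and part (v) substitutes $|N_h|^{-1}Z(g(N,T))$. After this substitution the two torsion terms cancel by symmetry of $\tau$ (so that $g(\tau(Z),\nu_h)=g(\tau(\nu_h),Z)$) and the two $\theta(S)$ terms cancel, leaving $c_1g(N,T)\big(g(N,T)^2+|N_h|^2\big)$, which equals $c_1g(N,T)$ since $|N|=1$.

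This proves $I=\divv_\s(|N_h|U)$ pointwise on $\s-\s_0$, hence the stated integral formula (the integrand extends by zero across $\s_0$, where $U$ vanishes). Finally, since $l,h$ have compact support in $\s-\s_0$, the Riemannian divergence theorem on $\s$ reduces $\int_\s\divv_\s(|N_h|U)\,d\s$ to an integral over $\partial\s$, which vanishes when $\partial\s=\emptyset$.
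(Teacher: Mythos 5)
Your proposal is correct and follows essentially the same route as the paper: substitute $U=lZ+hS$ into \eqref{var}, use the Leibniz rule on $|N_h|Z(l)$ and $S(h|N_h|)$, and match the result against Lemma \ref{devergenceteorem}, with the residual $l$-term killed by combining parts (i) and (v) of Lemma \ref{conti} and the symmetry of $\tau$. The paper compresses exactly this verification into one display with the citation ``Lemma \ref{conti}, formula \eqref{var} and Lemma \ref{devergenceteorem}''; your write-up just makes the needed pointwise identity explicit.
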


\begin{proof} By \eqref{var} we get
\begin{equation*}
\begin{aligned}
\frac{d}{ds}\Big{|}_{s=0} A(\varphi_{s}(\s))&=\m_{\s-\s_0}\{ c_1lg(N,T)g(J(\nu_h),Z)+|N_h|Z(l)  \}d\s\\
&+\m_{\s-\s_0}\{ S(|N_h|h)+hg(N,T)|N_h|g(\n_{Z}\nu_h,Z)-h|N_h|^2g(\tau(Z),Z)  \}d\s\\
&=\m_{\s}\divv_\s (l|N_h|Z)d\s+\m_{\s}\divv_\s (h|N_h|S)d\s,
\end{aligned}
\end{equation*}
where we have used $|N_h|Z(l)=Z(|N_h|l)-lZ(|N_h|)$, Lemma \ref{conti}, formula \eqref{var} and  Lemma \ref{devergenceteorem}. When $\partial\s=\emptyset$ we can use the Riemannian divergence theorem to prove that the variation vanishes.
\end{proof}

\begin{corollary}\label{1var:normal} Let $\s$ be an oriented immersed $\mathcal{C}^2$ surface in a \conMM. Then the first variation of the area induced by a normal vector field  $U=uN$, with $u\in\mathcal{C}^1_{0}(\s)$, is
\begin{equation*}
\begin{aligned}
\frac{d}{ds}\Big{|}_{s=0} A(\varphi_{s}(\s))&=\m_{\s}u \,g(\n_{Z}\nu_h,Z)d\s-\m_\s \divv_\s (ug(N,T)S)d\s.
\end{aligned}
\end{equation*}
Furthermore if $u\in\mc{C}^1_0(\s-\s_0)$ we get
\[
\frac{d}{ds}\Big{|}_{s=0} A(\varphi_{s}(\s))=\m_{\s}u \,g(\n_{Z}\nu_h,Z)d\s.
\]
\end{corollary}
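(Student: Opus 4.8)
The plan is to substitute the purely normal field $U = uN$ directly into the general first variation formula \eqref{var} and then reorganize the resulting integrand into a divergence plus the mean-curvature density $u\,g(\n_Z\nu_h,Z)$. First I would record the two quantities attached to $U$ that actually appear in \eqref{var}. Writing $N = g(N,T)T + |N_h|\nu_h$, the horizontal part is $U_h = u|N_h|\nu_h$ and the Reeb component is $g(U,T) = u\,g(N,T)$. With these in hand, the four summands of \eqref{var} collapse nicely: the term $c_1 g(N,T)g(J(\nu_h),U_h) = c_1 u|N_h|g(N,T)\,g(J(\nu_h),\nu_h)$ vanishes identically by \eqref{Jrotazione}; and expanding $\n_Z(u|N_h|\nu_h) = Z(u|N_h|)\nu_h + u|N_h|\n_Z\nu_h$ inside $|N_h|\,g(\n_Z U_h,Z)$, the first piece dies because $Z = J(\nu_h)$ is orthogonal to $\nu_h$, leaving only $u|N_h|^2\,g(\n_Z\nu_h,Z)$. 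The two remaining terms are $-S(u\,g(N,T))$ and $u|N_h|g(N,T)\,g(\tau(Z),Z)$.

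The key step is to convert $-S(u\,g(N,T))$ into a divergence using Lemma \ref{devergenceteorem} with $f = u\,g(N,T)$. Solving that identity for $S(f)$ produces $-S(u\,g(N,T)) = -\divv_\s(u\,g(N,T)S) + u\,g(N,T)^2\,\theta(Z) - u\,g(N,T)|N_h|\,g(\tau(Z),Z)$, where $\theta(Z) = g(\n_Z\nu_h,Z)$ by \eqref{def:theta}. Two cancellations then close the computation. The torsion contribution $-u\,g(N,T)|N_h|\,g(\tau(Z),Z)$ from the divergence exactly cancels the surviving term $+u|N_h|g(N,T)\,g(\tau(Z),Z)$. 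The two curvature contributions recombine as $\bigl(g(N,T)^2 + |N_h|^2\bigr)g(\n_Z\nu_h,Z) = g(\n_Z\nu_h,Z)$, since $|N| = 1$. This leaves the integrand $u\,g(\n_Z\nu_h,Z) - \divv_\s(u\,g(N,T)S)$ over $\s-\s_0$; because $\s_0$ carries no area the integral may be extended to all of $\s$, yielding the first formula.

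For the second statement, if instead $u\in\mc{C}^1_0(\s-\s_0)$, then $u\,g(N,T)S$ is a $\mc{C}^1$ vector field with compact support contained in the regular set, away from both $\s_0$ and $\partial\s$. The Riemannian divergence theorem then forces $\m_\s \divv_\s(u\,g(N,T)S)\,d\s = 0$, and only the mean-curvature term $\m_\s u\,g(\n_Z\nu_h,Z)\,d\s$ remains.

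The main obstacle I expect is the passage from the regular set $\s-\s_0$ to $\s$ in the first formula: both the mean-curvature factor $g(\n_Z\nu_h,Z)$ and the field $S$ of \eqref{def:S} are singular along $\s_0$, so one must justify that $\s_0$ is negligible for the area measure and that the combined integrand stays integrable against the compactly supported weight $u$. This is also precisely why the divergence term cannot be discarded in the first formula — when $u$ does not vanish on $\s_0$ the field $u\,g(N,T)S$ fails to have support in the regular set, so it encodes a genuine contribution near the singular set. By contrast, the algebraic cancellations after the substitution $U = uN$ are routine; the only care needed there is sign bookkeeping in Lemma \ref{devergenceteorem}.
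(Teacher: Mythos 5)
Your proposal is correct and follows essentially the same route as the paper: substitute $U=uN$ into \eqref{var}, kill the $J$-term by \eqref{Jrotazione} and the $Z(u|N_h|)$-term by orthogonality, then apply Lemma \ref{devergenceteorem} with $f=u\,g(N,T)$ so that the torsion terms cancel and $(g(N,T)^2+|N_h|^2)\,g(\n_Z\nu_h,Z)=g(\n_Z\nu_h,Z)$, with the divergence theorem handling the case $u\in\mc{C}^1_0(\s-\s_0)$. Your closing remarks on integrability near $\s_0$ are a sensible extra caution that the paper passes over silently, but they do not alter the argument.
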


\begin{proof} By \eqref{var} and Lemma \ref{devergenceteorem} we get
\begin{equation*}
\begin{aligned}
\frac{d}{ds}\Big{|}_{s=0} A(\varphi_{s}(\s))&=\m_{\s}\{ -S(g(N,T)u)+u|N_h|^2g(\n_{Z}\nu_h,Z)+ug(N,T)|N_h|g(\tau(Z),Z)  \}d\s\\
&=\m_{\s}u \, g(\n_{Z}\nu_h,Z)d\s-\m_\s \divv_\s (ug(N,T)S)d\s.
\end{aligned}
\end{equation*}
When $u\in\mc{C}^1_0(\s-\s_0)$, we can use the Riemannian divergence theorem to conclude
\[
\m_\s \divv_\s (ug(N,T)S)d\s=0.
\]
\end{proof}

\begin{remark} When $(M,g_\hhh,\omega)$ is the Heisenberg group $\mathbb{H}^1$ we have that Corollary \ref{1var:normal}  coincides with \cite[Lemma~4.3]{Ri-Ro}. Furthermore Corollary \ref{1var:normal} coincides with \cite[eq.~(2.8')]{CJHMY} where the authors consider non-singular variations in a \phMM. Different versions of Corollary \ref{1var:normal} can be found also in \cite{Mo}, \cite{Mo1} and \cite{Hl-Pa}, \cite{Hl-Pa2}, for Carnot groups and vertically rigid manifolds, respectively. 
\end{remark}

\begin{definition} Let $\s$ be a surface of class $\mathcal{C}^2_{\hhh}$. Corollary \ref{1var:normal} allows us to define the \emph{mean curvature} of $\s$ in a point $p\in\s-\s_{0}$ as
\begin{equation}\label{eq:meancur}
H:=-g(\n_{Z}\nu_{h},Z).
\end{equation}
We call \emph{minimal surface} a surface of class $\mathcal{C}^2_{\hhh}$ whose mean curvature $H$ vanishes.
\end{definition}
We note that our definition of mean curvature coincides with \cite{Am-SC-Vi}, \cite{CJHMY}, \cite{Hl-Pa} and \cite{Ri-Ro} among others, for surfaces of class $\mathcal{C}^2$ and it is motived by Proposition \ref{1var:c2h} in Section \ref{sec:1c2h}. In \cite{Rireach} the author also defines the mean curvature for surfaces of class $\mc{C}^2_\hhh$.

\section{Characteristic Curves and Jacobi-like vector fields.}

In  this section we give a characterization of characteristic curves in a constant mean curvature surface and we define special vector fields along characteristic curves that are the natural generalization of Jacobi vector fields along geodesics in Sasakian sub-Riemannian manifolds. 


\begin{proposition} Let $\s$ be an oriented immersed $\mathcal{C}^2_\hhh$ surface of constant mean curvature $H=c_1\lambda$ in a contact sub-Riemannian three-dimensional manifold $(M,g_\hhh,\omega)$. Then, outside the singular set, the equation of characteristic curves is 
\begin{equation}
\label{eq:carcurvesph}
\n_{Z}Z+c_1\lambda\, J(Z)=0,
\end{equation}
where $\n$ denote the pseudo-hermitian connection. We will call $\lambda$ the curvature of the characteristic curve.
\end{proposition}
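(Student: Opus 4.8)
The plan is to prove \eqref{eq:carcurvesph} by a direct pointwise computation on $\s-\s_0$. Since the characteristic curves are by definition the integral curves of the unit horizontal field $Z=J(\nu_h)$, parametrizing such a curve by arc length makes $Z$ its velocity, so the left-hand side is simply the covariant acceleration $\n_Z Z$. I would compute it by expanding $\n_Z Z$ in the orthonormal frame $\{Z,\nu_h,T\}$ of $TM$; recall that $Z$ and $\nu_h$ are orthonormal horizontal fields (orthogonality is \eqref{Jrotazione} applied to $\nu_h$, and both are unit because $J$ preserves the norm on $\hhh$). The whole argument rests on metric compatibility $\n g=0$ of the pseudo-hermitian connection, which is precisely what singles out $\n$ together with its prescribed torsion \eqref{def:torsion}, so that the product rule for $g(\cdot,\cdot)$ is available.

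First I would dispose of the $T$-component. Since $Z$ is horizontal, $g(Z,T)=0$ along the curve, and differentiating with the product rule gives $g(\n_Z Z,T)=-g(Z,\n_Z T)$. By \eqref{nablaT} we have $\n_Z T=0$, hence $g(\n_Z Z,T)=0$. Next, the $Z$-component vanishes because $Z$ is a unit field: metric compatibility yields $2\,g(\n_Z Z,Z)=Z(g(Z,Z))=Z(1)=0$.

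It then remains to identify the $\nu_h$-component. From $g(Z,\nu_h)=0$ and metric compatibility, $g(\n_Z Z,\nu_h)=-g(Z,\n_Z\nu_h)=-g(\n_Z\nu_h,Z)$, which is exactly $H$ by the definition \eqref{eq:meancur} of the mean curvature. Therefore $\n_Z Z=H\,\nu_h$. Using the constant mean curvature hypothesis $H=c_1\lambda$ together with the identity $J(Z)=J^2(\nu_h)=-\nu_h$ on $\hhh$, this becomes $\n_Z Z=-c_1\lambda\,J(Z)$, which is \eqref{eq:carcurvesph}.

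The computation itself is short; the only points that require care are justifying the metric compatibility of $\n$, so that the product-rule manipulations of $g(\cdot,\cdot)$ are legitimate, and keeping careful track of signs, in particular the relation $J(Z)=-\nu_h$ that converts the $\nu_h$-component into the stated $J(Z)$ term. I do not expect any serious analytic obstacle: the statement is pointwise on $\s-\s_0$ and the $\mc{C}^2_\hhh$ hypothesis is used only to guarantee that $\nu_h$, $Z$, and hence $\n_Z\nu_h$ are well defined and continuous there.
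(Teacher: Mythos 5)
Your proof is correct and is essentially the paper's own argument written out in full: the paper's one-line proof cites exactly the three ingredients you use, namely \eqref{nablaT} for the $T$-component, \eqref{eq:meancur} for the $\nu_h$-component, and $|Z|=1$ for the $Z$-component of $\n_Z Z$. The sign bookkeeping via $J(Z)=J^2(\nu_h)=-\nu_h$ and the appeal to metric compatibility of $\n$ are both handled correctly.
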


\begin{proof}
It is an immediate consequence of  \eqref{nablaT}, \eqref{eq:meancur} and $|Z|=1$.
\end{proof}

\begin{remark} Let $\gamma$ be a Carnot-Caratheodory geodesic in a \phMM. Then the unit tangent vector field $\dot{\gamma}$ to $\gamma$ satisfies \cite[Proposition~15]{Ru}
\begin{equation}
\label{geod}
\begin{cases}
      & \nabla_{\dot{\gamma}}\dot{\gamma}+c_1\lambda J(\dot{\gamma})=0 \\
      & \dot{\gamma}(\lambda)=-\frac{1}{c_1}g(\tau(\dot{\gamma}),\dot{\gamma}),
\end{cases}
\end{equation}
where $\n$ is the pseudo-hermitian connection. This implies that characteristic curves in a constant mean curvature surface are  sub-Riemannian geodesics if and only if $g(\tau(\dot{\gamma}),\dot{\gamma})=0$. For instance, this is satisfied in manifolds with vanishing torsion.

\end{remark}

\begin{proposition}\label{jacobifields} We consider a \phMM, a curve $\alpha:I\rightarrow M$ of class $\mc{C}^1$ defined on some interval $I\subset M$ and a $\mc{C}^1$ unit horizontal vector field $U$ along $\alpha$. For fixed $\lambda\in \rr$, suppose we have a well-defined map $F:I\times I'\rightarrow M$ given by $F(\varepsilon,s)=\gamma_\eps (s)$, where $I'$ is a open interval containing the origin, and $\gamma_\eps (s)$ is a characteristic curve of curvature $\lambda$ with initial conditions $\gamma_\eps (0)=\alpha(\eps)$ and $\dot{\gamma_\eps}(0)=U(\eps)$. Then the vector field $V_\eps(s):=(\partial F/\partial \eps)(\eps,s)$ satisfies the following properties:
\begin{itemize}
\item [(i)] $V_\eps$ is a $\mc{C}^{\infty}$ vector field along $\gamma_\eps$ and satisfies $[\dot{\gamma}_\eps,V_\eps]=0$;
\item [(ii)] along $\gamma_\eps$ we have
\[
\dot{\gamma}_\eps(\lambda g(V_\eps,T)+g(V_\eps,\dot{\gamma}_\eps))=-g(V_\eps,T)g(\tau(\dot{\gamma}_\eps),\dot{\gamma}_\eps),
\]
in particular $\lambda g(V_\eps,T)+g(V_\eps,\dot{\gamma}_\eps)$ is constant along sub-Riemannian geodesics;
\item [(iii)] $V_\eps$ satisfies the equation 
\begin{equation}\label{eq:jacobi}
\begin{aligned}
&V_\eps''+R(\dot{\gamma_\eps},V_\eps)\dot{\gamma}_\eps+c_1\lambda\{J(V_\eps')+g(V_\eps,T)J(\tau(\dot{\gamma}_\eps))\}+\n_{\dot{\gamma}_\eps}Tor(V_\eps,\dot{\gamma}_\eps)=0,
\end{aligned}
\end{equation}
\[
\n_{\dot{\gamma}_\eps}Tor(V_\eps,\dot{\gamma}_\eps)=-g(V_\eps,T)''T+g(V_\eps,T)'\tau(\dot{\gamma}_\eps)+g(V_\eps,T)\n_{\dot{\gamma}_\eps} \tau(\dot{\gamma}_\eps)
\]
where $V'$ denotes the covariant derivative along $\gamma_\eps$  and $R$ the curvature tensor with respect to the pseudo-hermitian connection.
\item [(iv)] the vertical component of $V_\eps$ satisfies the differential equation
\[
g(V_\eps,T)'''+ \beta_1(s) g(V_\eps,T)'+c_1 \beta_2(s) g(V_\eps,T)=0,
\]
with
\[
\beta_1(s)=W-c_1g(\tau(\dot{\gamma}_\eps),J(\dot{\gamma}_\eps))+c_1^2\lambda^2
\]
\[
\beta_2(s)=c_1\lambda g(\tau(\dot{\gamma}_\eps),\dot{\gamma}_\eps)+g(R(\dot{\gamma}_\eps,T)\dot{\gamma}_\eps,J(\dot{\gamma}_\eps))-\dot{\gamma}_\eps(g(\tau(\dot{\gamma}_\eps),J(\dot{\gamma}_\eps))),
\]
where $W$ is the pseudo-hermitian scalar curvature and $'$ is the derivative respect to $s$.
\end{itemize}
\end{proposition}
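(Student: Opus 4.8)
The plan is to establish the four claims in order, working throughout in the moving orthonormal frame $\{Z,J(Z),T\}$ along each characteristic curve, where I abbreviate $Z:=\dot{\ga}_\eps$, $V:=V_\eps$, and write $V'$ for the covariant derivative $\n_Z V$. The first standing fact I would record is that $Z$ remains unit and horizontal: differentiating $g(Z,Z)$ and $g(Z,T)$ along $\ga_\eps$ and invoking the characteristic equation \eqref{eq:carcurvesph} together with \eqref{Jrotazione} and \eqref{nablaT} shows both are constant, so $g(Z,T)\equiv 0$ and $|Z|\equiv 1$. For (i), since for fixed $\eps$ the curve $\ga_\eps$ solves the smooth second order system \eqref{eq:carcurvesph} with initial data that is $\mc{C}^1$ in $\eps$, the map $F$ is smooth in $s$ and $\mc{C}^1$ in $\eps$; hence $V$ is a well-defined variation field which, for fixed $\eps$, satisfies a linear equation in $s$ and is therefore $\mc{C}^\infty$. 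The identity $[\dot{\ga}_\eps,V_\eps]=0$ is the symmetry of the mixed derivatives of $F$, which I will use in the equivalent form $V'=\n_{V}Z+\tor(Z,V)$.

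For (ii) I would compute the two first derivatives separately. Using $\n_Z T=0$ gives $\frac{d}{ds}g(V,T)=g(V',T)$, and then writing $V'=\n_{V}Z+\tor(Z,V)$ and feeding $g(Z,T)=0$ into the torsion formula \eqref{def:torsion} yields $g(V',T)=c_1\,g(J(Z),V)$ and $g(V',Z)=-g(V,T)\,g(\tau(Z),Z)$. Combining these with $\n_Z Z=-c_1\la J(Z)$ to expand $\frac{d}{ds}g(V,\dot{\ga}_\eps)$, the two terms proportional to $c_1\la\,g(V,J(Z))$ cancel and the stated identity for $\frac{d}{ds}(\la g(V,T)+g(V,\dot{\ga}_\eps))$ drops out; along a sub-Riemannian geodesic the right-hand side vanishes because $g(\tau(\dot{\ga}),\dot{\ga})=0$, cf. \eqref{geod}.

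For (iii) the idea is to differentiate the characteristic equation $\n_Z Z+c_1\la J(Z)=0$ in the transverse direction, i.e. to apply $\n_{V}$. I would use $\n J=0$ (\eqref{nablaJ}) to commute $\n_{V}$ with $J$, the definition of the curvature $R$ to replace $\n_{V}\n_Z Z$ by $\n_Z\n_{V}Z+R(Z,V)Z$, and $[Z,V]=0$ to substitute $\n_{V}Z=V'-\tor(Z,V)$. Since $J(T)=0$, the torsion formula collapses $J(\tor(V,Z))$ to $g(V,T)J(\tau(Z))$, producing \eqref{eq:jacobi}. The accompanying expression for $\n_Z\tor(V,Z)$ follows by expanding $\tor(V,Z)$ with \eqref{def:torsion}, differentiating, using $\n_Z T=0$ and the relation $c_1 g(J(V),Z)=-g(V,T)'$ from (ii).

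Part (iv) is where the real work lies. Writing $V=aZ+bJ(Z)+fT$ with $f=g(V,T)$, part (ii) gives $f'=c_1 b$, so the claimed third order scalar equation for $f$ is equivalent to a second order equation for $b$, which I would obtain by projecting the Jacobi equation \eqref{eq:jacobi} onto $J(Z)$. The hard part will be the bookkeeping. Because $J(Z)$ is not parallel, $\n_Z J(Z)=c_1\la Z$, I must carefully relate $g(V'',J(Z))$ to $b''$; I then simplify the curvature term through the definition \eqref{def:W} of $W$ together with the skew-symmetry of $R$ in its last two arguments (which uses $\n g=0$), obtaining $g(R(Z,J(Z))Z,J(Z))=W$ and isolating $g(R(Z,T)Z,J(Z))$; and I expand the torsion-derivative term with the secondary formula of (iii), where $\n_Z\tau(Z)$ and the non-parallelism of $J(Z)$ generate the derivative $g(\tau(Z),J(Z))'$ together with the remaining torsion terms. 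A pleasant simplification is that $J$ is an isometry of $\hhh$, so $g(J(V'),J(Z))+f\,g(J(\tau(Z)),J(Z))=0$ and the entire $c_1\la$-bracket drops. Substituting $b=f'/c_1$ and collecting the coefficients of $f'$ and of $f$ should produce $\beta_1$ and $\beta_2$; the principal risk is a sign slip in the torsion contributions, so I would track the terms containing $g(\tau(Z),J(Z))$ with particular care.
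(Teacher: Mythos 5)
Your proposal is correct and follows essentially the same route as the paper: the same torsion computation for (ii), differentiation of the characteristic equation $\n_{\dot\ga}\dot\ga+c_1\la J(\dot\ga)=0$ in the $V$-direction for (iii), and projection of the Jacobi equation onto $J(\dot\ga)$ combined with $g(V,T)'=c_1g(J(\dot\ga),V)$ for (iv). Your reformulation of (iv) as a second-order equation for $b=g(V,J(\dot\ga))$, and your observation that the $c_1\la$-bracket vanishes under this projection, are just equivalent repackagings of the paper's direct triple differentiation of $g(V,T)$.
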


\begin{proof} For simplicity we avoid the subscript $\eps$ in the computation. The proof of (i) is analogous to the one of \cite[Lemma~3.3~(i)]{Ro}.  From $[\dg,V]=0$ and \eqref{def:torsion} we have
\begin{equation}\label{vt1}
\begin{aligned}
g(V,T)'&=\dot{\gamma}(g(V,T))=g(\n_{\dg} V,T)=g(\n_{V}\dot{\ga}+Tor(\dg,V),T)\\
&=g(Tor(\dot{\gamma},V),T)=c_1g(J(\dot{\gamma}),V).
\end{aligned}
\end{equation}
Here $'$ denotes the derivative of a function. \eqref{vt1} together with
\[
g(V,\dot{\gamma})'=g(Tor(\dot{\gamma},V),\dot{\gamma})-c_1\lambda g(V,J(\dot{\gamma}))=-g(V,T)g(\tau(\dot{\gamma}),\dot{\gamma})-c_1\lambda g(V,J(\dot{\gamma}))
\]
proves (ii). Now using \eqref{nablaJ} we get
\[
\n_V J(\dot{\gamma})=J(\n_V \dot{\gamma})=J(V')+g(V,T)J(\tau(\dot{\gamma})),
\]
that permits us to compute $\n_V(\n_{\dot{\gamma}}\dot{\gamma}+c_1\lambda J(\dot{\gamma}))$ to obtain the first equation in (iii). The second one is simply obtained using \eqref{def:torsion} and \eqref{vt1}.

To prove (iv) we have, differentiating \eqref{vt1}
\begin{equation}\label{vt2}
\frac{1}{c_1}g(V,T)''=c_1\lambda g(\dot{\gamma},V)+g(J(\dot{\gamma}),V')
\end{equation}
and consequently
\[
\begin{aligned}
\frac{1}{c_1}g(V,T)'''&=\dot{\gamma}(c_1\lambda g(\dot{\gamma},V))+c_1\lambda g(\dot{\gamma},V')+g(V'',J(\dot{\gamma}))\\
&=2c_1\lambda g(V,\dot{\gamma})'+c_1\lambda^2g(V,T)'+g(V'',J(\dot{\gamma})).
\end{aligned}
\]
Taking into account (ii) we get
\begin{equation}\label{vt3}
\frac{1}{c_1}g(V,T)'''=-c_1\lambda^2g(V,T)'- 2c_1\lambda g(V,T)g(\tau(\dot{\gamma}),\dot{\gamma})+g(V'',J(\dot{\gamma})).
\end{equation}
The only term you have to dial with is $g(R(\dot{\ga},V)\dot{\ga}+c_1\lambda J(V'),J(V'))$. Now by point (iii) we have
\[
\begin{aligned}
g(V'',J(\dot{\gamma}))=&-g(R(\dot{\gamma},V)\dot{\gamma},J(\dot{\gamma}))-c_1\lambda\{g(V,T)g(\tau(\dot{\gamma}),\dot{\gamma})+g(V',\dot{\gamma})\}\\
&-g(V,T)'g(\tau(\dot{\gamma}),J(\dot{\gamma}))-g(V,T)g(\n_{\dot{\gamma}}\tau(\dot{\gamma}),J(\dot{\gamma})).
\end{aligned}
\]
From $V=g(V,T)T+g(V,\dot{\ga})\dot{\ga}+g(V,J(\dot{\ga}))J(\dot{\ga})$ we obtain
\[
-g(R(\dot{\gamma},V)\dot{\gamma},J(\dot{\gamma}))=-g(V,J(\dot{\gamma}))W-g(V,T)g(R(\dot{\gamma},T)\dot{\gamma},J(\dot{\gamma})).
\]
Furthermore since 
\[
-g(\n_{\dot{\gamma}}\tau(\dot{\gamma}),J(\dot{\gamma}))=-\dot{\gamma}(\tau(\dot{\gamma}),J(\dot{\gamma}))+c_1\lambda g(\tau(\dot{\gamma}),\dot{\gamma})
\]
and
\[
g(V',\dot{\gamma})=g(V,\dot{\gamma})'-g(\n_{\dot{\gamma}}\dot{\gamma},V)=-g(V,T)g(\tau(\dot{\gamma}),\dot{\gamma})
\]
we finally get
\begin{equation}\label{V2J}
\begin{aligned}
g(V'',J(\dot{\gamma}))=&-\frac{1}{c_1}g(V,T)'W+c_1\lambda g(V,T)g(\tau(\dot{\gamma}),\dot{\gamma})-g(\tau(\dot{\gamma}),J(\dot{\gamma}))g(V,T)'\\
&-g(V,T)\{ g(R(\dot{\gamma},T)\dot{\gamma},J(\dot{\gamma}))-\dot{\gamma}(g(\tau(\dot{\gamma})),J(\dot{\gamma}))  \}.
\end{aligned}
\end{equation}
We conclude summing \eqref{vt3} and \eqref{V2J} and simplifying.
\end{proof}

\begin{definition} Let $\ga:I\rightarrow \s$ be a characteristic curve, where $I$ is a real interval and $\s$ is a surface. A vector field $V$ along $\ga$ is called a \emph{Jacobi-like field} if it satisfies \eqref{eq:jacobi} for all $s\in I$.
\end{definition}

\begin{remark} Special cases of Proposition \ref{jacobifields}  can be found in \cite{Ch-Ya}, \cite{Ri-Ro}  and \cite{Ro}. 
\end{remark}

\section{The structure of the singular set.}\label{singularset}

The local model of a three-dimensional contact sub-Riemannian manifold is the contact manifold $(\rr^{3},\omega_0)$, where $\omega_0$ defined in (\ref{1formacanonica}) is the standard contact form in $\rr^{3}$, together with an arbitrary positive definite metric $g_{\mathcal{H}_0}$ in $\mathcal{H}_0$. A basis of the horizontal distribution is given by
\[
X:=\frac{\ptl}{\ptl x}+y\,\frac{\ptl}{\ptl t},\qquad Y:=\frac{\ptl}{\ptl y}+x\,\frac{\ptl}{\ptl t},\qquad 
\]
and the Reeb vector field is
\[
T:=\frac{\ptl}{\ptl t}.
\]
The metric $g_{\mathcal{H}_0}$ will be extended to a Riemannian metric on $\rr^{3}$ so that the Reeb vector field is unitary and orthogonal to $\mathcal{H}_0$. We shall usually denote the set of vector fields $\{X,Y\}$ by $\{Z_1,Z_{2}\}$. The coordinates of $\rr^{3}$ will be denoted by $(x,y,t)$, and the first $2$ coordinates will be abbreviated by $z$. We shall consider the map $F:\rr^{2}\to\rr^{2}$ defined by
\[
F(x,y):=(-y,x).
\]

Given a $C^2$ function $u:\Om\subset\rr^{2}\to\rr$ defined on an open subset $\Om$, we define the graph $G_u:=\{(z,t): z\in\Om, t=u(z)\}$. By \eqref{eq:area}, the sub-Riemannian area of the graph is given by
\[
A(G_u)=\int_{G_u} |N_h|\,dG_u,
\]
where $dG_u$ is the Riemannian metric of the graph and $|N_h|$ is the modulus of the horizontal projection of a unit normal to $G_u$. We consider on $\Om$ the basis of vector~fields $
\big\{\tfrac{\ptl}{\ptl x},\tfrac{\ptl}{\ptl y}\big\}$.

By the Riemannian area formula
\begin{equation}
\label{eq:dgu}
dG_u=\text{Jac}\,d\mathcal{L}^{2},
\end{equation}
where $d\mathcal{L}^{2}$ is Lebesgue measure in $\rr^{2}$ and $\text{Jac}$ is the Jacobian of the canonical map $\Om\to G_u$ given by
\begin{equation}
\label{eq:jac}
\text{Jac}=\{\det(g)+g_{11}(u_y+x)^2+g_{22}(u_x-y)^2-2g_{12}(u_x-y)(u_y+x)\}^{1/2}
\end{equation}
where $g$ is the matrix of the metric, with elements $g_{ij}:=g(Z_i,Z_j)$. 

Let us compute the composition of $|N_h|$ with the map $\Om\to G_u$. The tangent space $T G_u$ is spanned by
\begin{equation}
\label{eq:zi}
X+(u_x-y)\,T, \quad Y+(u_y+x)\,T.
\end{equation}
So the projection to $\Om$ of the singular set $(G_u)_0$ is the set $\Om_0\subset\Om$ defined by $\Om_0:=\{z\in\Om: (\nabla u +F)=0\}$, where $\n$ is the Euclidean gradient in $\rr^2$. Let us compute a \emph{downward pointing} normal vector $\tilde{N}$ to $G_u$ writing
\begin{equation}
\label{eq:tilden}
\tilde{N}=\sum_{i=1}^{2}(a_i Z_i)-T. 
\end{equation}
The horizontal component of $\tilde{N}$ is $\tilde{N}_h=\sum_{i=1}^{2} a_iZ_i$.  We have
\[
\sum_{i=1}^{2} a_ig_{ij}=g(\tilde{N}_h,Z_j)=g(\tilde{N},Z_j)=-(\nabla u+F)_j\escpr{\tilde{N},T}=(\nabla u +F)_j,
\]
since $Z_j$ is horizontal, $\tilde{N}$ is orthogonal to $Z_j$ defined by \eqref{eq:zi}, and \eqref{eq:tilden}. Hence 
\[
(a_1, a_{2})=b(\nabla u+F),
\]
where $b$ is the inverse of the matrix $\{g_{ij}\}_{i,j=1,2}$. So we get
\begin{equation}
\label{eq:modtilden}
|\tilde{N}|=(1+\escpr{\nabla u +F,b(\nabla u +F)})^{1/2},
\end{equation}
and
\[
|\tilde{N}_h|=\escpr{\nabla u+F, b(\nabla u+F)}^{1/2},
\]
where $\escpr{,}$ is the Euclidean Riemannian metric in $\rr^{2}$, and so
\begin{equation}
\label{eq:modnh}
|N_h|=\frac{|\tilde{N}_h|}{|\tilde{N}|}=\frac{\escpr{\nabla u+F, b(\nabla u+F)}^{1/2}}{\big(1+\escpr{\nabla u +F,b(\nabla u +F)}\big)^{1/2}}.
\end{equation}
Observe that, from \eqref{eq:tilden} and \eqref{eq:modtilden} we also get
\begin{equation}
\label{eq:nxt}
g(N,T)=-\frac{1}{\big(1+\escpr{\nabla u+F,b(\nabla  u+F)}\big)^{1/2}}.
\end{equation}
Hence we obtain from \eqref{eq:area}, \eqref{eq:dgu}, \eqref{eq:jac} and \eqref{eq:modnh}
\begin{equation}
\label{eq:agu}
A(G_u)=\int_\Om \escpr{\nabla u+F, b(\nabla u+F)}^{1/2}\,\frac{\det(g_{ij}+(\nabla u+F)_i(\nabla u+F)_j)^{1/2}}{\big(1+\escpr{\nabla u +F,b(\nabla u +F)}\big)^{1/2}}
\,d\mathcal{L}^{2}.
\end{equation}

We can compute the mean curvature of a graph $G_u$ \cite[Lemma~4.2]{Ga-Ri}

\begin{lemma}
\label{1variation:graph} 
Let us consider the contact sub-Riemannian manifold $(\mathbb{R}^{3},g_{\mathcal{H}_0},\omega_0)$, where $\omega_0$ is the standard contact form in $\mathbb{R}^{3}$ and $g_{\mathcal{H}_0}$ is a positive definite metric in  the horizontal distribution $\mathcal{H}_0$. Let $u:\Omega\subset \rr^{2}\rightarrow \rr$ be a ${C}^2$ function.
We denote by $g=(g_{ij})_{i,j=1, 2}$ the metric matrix and by $b=g^{-1}=(g^{ij})_{i,j=1,2}$ the inverse metric matrix. Then the mean curvature of the graph $G_u$, computed with respect to the downward pointing normal, is given by
\begin{equation}
\label{eq:hgu}
-\divv\bigg(\frac{b(\nabla u+F)}{\escpr{\nabla u+F,b(\nabla u+F)}^{1/2}}\bigg)+\mu,
\end{equation}
where $\mu$ is a bounded function in $\Om\setminus\Om_0$, and $\divv$ is the usual Euclidean divergence in $\Om$.
\end{lemma}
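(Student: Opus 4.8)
The plan is to reduce the computation of the mean curvature to a first variation of the explicit area integral \eqref{eq:agu}, and then to identify the resulting Euler--Lagrange operator with $H$ by means of Corollary \ref{1var:normal}.

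First I would simplify \eqref{eq:agu}. Writing $P:=\nabla u+F=(u_x-y,u_y+x)$ and applying the matrix determinant lemma to \eqref{eq:jac} gives $\det(g_{ij}+P_iP_j)=\det(g)\,(1+\escpr{P,bP})$, so the factor $(1+\escpr{P,bP})^{1/2}$ in \eqref{eq:agu} cancels and the area collapses to the much simpler expression
\[
A(G_u)=\m_\Om \escpr{P,bP}^{1/2}\,\sqrt{\det g}\,\,d\mathcal{L}^{2},
\]
where $b=g^{-1}$ and $\sqrt{\det g}$ are evaluated \emph{along the graph}, i.e. at $(z,u(z))$. The same identity shows $\mathrm{Jac}=\sqrt{\det g}\,(1+\escpr{P,bP})^{1/2}$, which together with \eqref{eq:nxt} yields the clean conversion $g(N,T)\,dG_u=-\sqrt{\det g}\,d\mathcal{L}^2$ that I will use below.

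Next I would compute the first variation along the vertical variation $u\mapsto u+s\varphi$, with $\varphi\in\mc{C}^\infty_0(\Om\setminus\Om_0)$. Its initial velocity field is $\varphi\,\ptl_t=\varphi\,T$; since the first variation is linear in the variation field, the tangential part of $\varphi T$ contributes only a term $\int_{G_u}\divv_{G_u}(|N_h|\varphi T^{\top})\,dG_u$ that integrates to zero (Corollary \ref{1var:tang}, compact support), while the normal component equals $g(N,T)\varphi$. Hence Corollary \ref{1var:normal} together with the area-element conversion gives
\[
\frac{d}{ds}\Big|_{s=0}A(G_{u+s\varphi})=-\m_{G_u}g(N,T)\,\varphi\,H\,dG_u=\m_\Om \varphi\,H\,\sqrt{\det g}\,\,d\mathcal{L}^2 .
\]
On the other hand, differentiating the simplified integral directly and integrating by parts in $\Om$ gives
\[
\frac{d}{ds}\Big|_{s=0}A(G_{u+s\varphi})=\m_\Om \varphi\,\Big(-\divv\big(\tfrac{bP}{\escpr{P,bP}^{1/2}}\sqrt{\det g}\big)+\tfrac{\ptl W}{\ptl u}\Big)\,d\mathcal{L}^2 ,
\]
where $W=\escpr{P,bP}^{1/2}\sqrt{\det g}$ and $\ptl W/\ptl u$ collects the terms coming from the $t$-dependence of $b$ and $\sqrt{\det g}$ (recall these are evaluated at $t=u$). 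Equating the two expressions for arbitrary $\varphi$ produces a pointwise identity for $H\sqrt{\det g}$; dividing by $\sqrt{\det g}$ and peeling off the factor $\sqrt{\det g}$ from inside the divergence via $\divv(f\mathbf{V})=f\,\divv\mathbf{V}+\nabla f\cdot\mathbf{V}$ isolates the leading term $-\divv\big(bP/\escpr{P,bP}^{1/2}\big)$ and leaves everything else as $\mu$.

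The final point, and the one requiring the most care, is to verify that $\mu$ is bounded on $\Om\setminus\Om_0$. Each surviving term contains either the factor $bP/\escpr{P,bP}^{1/2}$, whose Euclidean norm stays bounded as $P\to0$ by positive definiteness of $b$, or a factor $\escpr{P,(\ptl_t b)P}/\escpr{P,bP}^{1/2}$ and $\escpr{P,bP}^{1/2}\,\ptl_t\sqrt{\det g}$ of order $|P|$, which in fact tend to $0$ as one approaches $\Om_0$. Thus $\mu$ is bounded and the mean curvature of $G_u$ is exactly \eqref{eq:hgu}. The main obstacle is therefore not the variation itself but the bookkeeping of the point-dependence of the metric $g_{ij}(z,t)$ along the graph, which is precisely what generates $\mu$, together with the justification that the vertical graph variation may be fed into the normal first-variation formula, i.e. that only its initial velocity field $\varphi T$ matters.
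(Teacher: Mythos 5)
Your argument is correct. The paper gives no proof of this lemma (it is quoted from \cite[Lemma~4.2]{Ga-Ri}), but your route is precisely the one the surrounding text sets up: the explicit functional \eqref{eq:agu} is derived so that the mean curvature can be read off as the Euler--Lagrange operator of the vertical perturbation $u\mapsto u+s\varphi$, matched against the intrinsic first variation $-\int_\Sg g(U,N)H\,d\Sg$ obtained by splitting $\varphi T$ into its normal part and its tangential part $-\varphi|N_h|S$ via Corollaries \ref{1var:tang} and \ref{1var:normal} (both of which precede Section~\ref{singularset}, so there is no circularity). Your observation that $\det(g_{ij}+P_iP_j)=\det(g)\,(1+\escpr{P,bP})$ collapses \eqref{eq:agu} to $\int_\Om\escpr{P,bP}^{1/2}\sqrt{\det g}\,d\mathcal{L}^2$ is a clean simplification, the sign bookkeeping through $g(N,T)\,dG_u=-\sqrt{\det g}\,d\mathcal{L}^2$ is right, and the boundedness of $\mu$ near $\Om_0$ follows as you say from $|bP|\leq C\escpr{P,bP}^{1/2}$ and the $O(|P|)$ size of the terms produced by the $t$-dependence of $b$ and $\sqrt{\det g}$.
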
 

Furthermore in dimension three we get

\begin{lemma}\label{equivcurvaturamedia}
Let us consider the contact sub-Riemannian manifold $(\mathbb{R}^{3},g_{\mathcal{H}_0},\omega_0)$, where $\omega_0$ is the standard contact form in $\mathbb{R}^{3}$ and $g_{\mathcal{H}_0}$ is a positive definite metric in  the horizontal distribution $\mathcal{H}_0$. Let $u:\Omega\subset \rr^{2}\rightarrow \rr$ be a ${C}^2$ function. Then
\[
\divv\bigg(\frac{b(\nabla u+F)}{\escpr{\nabla u+F,b(\nabla u+F)}^{1/2}}\bigg)=det(g)\divv\bigg(\frac{\nabla u+F}{\escpr{\nabla u+F,\nabla u+F}^{1/2}}\bigg)+\rho,
\]
where $\rho$ is a bounded function in $\Om\setminus\Om_0$.
\end{lemma}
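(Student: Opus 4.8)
The plan is to expand both Euclidean divergences with the quotient rule and then isolate the terms that can be unbounded as one approaches the singular set $\Om_0=\{\nabla u+F=0\}$. Set $w:=\nabla u+F$, $P:=\escpr{w,bw}$ and $Q:=\escpr{w,w}$, so that $\Om_0=\{P=0\}=\{Q=0\}$ and the two vector fields to be compared are $bw\,P^{-1/2}$ and $w\,Q^{-1/2}$. Differentiating,
\[
\divv\Big(\frac{bw}{P^{1/2}}\Big)=\frac{\divv(bw)}{P^{1/2}}-\frac{\escpr{bw,\nabla P}}{2\,P^{3/2}},\qquad
\divv\Big(\frac{w}{Q^{1/2}}\Big)=\frac{\divv w}{Q^{1/2}}-\frac{\escpr{w,\nabla Q}}{2\,Q^{3/2}}.
\]
I would split every term according to the order of differentiation it carries in $u$: the contributions containing the Euclidean Hessian $(u_{ij})$, which are divided by $P^{3/2}$ or $Q^{3/2}$ and are therefore the only ones that can blow up near $\Om_0$, and the contributions involving at most $\nabla u$ together with the derivatives of $b$ and of $F$.

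The algebraic core is the pointwise identity, valid for every symmetric positive definite $2\times2$ matrix $b$ and every $w\in\rr^2$,
\[
\escpr{w,bw}\,b-(bw)\otimes(bw)=\det(b)\big(\escpr{w,w}\,I-w\otimes w\big).
\]
Using $\divv(bw)=\sum_{i,j}b_{ij}u_{ij}$ and $\nabla P=2\,(u_{ij})(bw)$ modulo first-order terms, the second-order part of $\divv(bw\,P^{-1/2})$ equals $P^{-3/2}\sum_{i,j}\big(P\,b_{ij}-(bw)_i(bw)_j\big)u_{ij}$; the identity rewrites the bracket as $\det(b)\big(Q\,\delta_{ij}-w_iw_j\big)$, that is, exactly $\det(b)=\det(g)^{-1}$ times the numerator of the second-order part of $\divv(w\,Q^{-1/2})$. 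This is the mechanism by which the factor $\det(g)$ of the statement is meant to appear. For the first-order terms I would check that the purely rotational pieces cancel, using $\divv F=0$ and $\sum_{i,j}b_{ij}\,\partial_iF_j=b_{12}-b_{21}=0$, so that each surviving numerator vanishes to order at least one in $w$ and is controlled by the $P^{-1/2}$ denominator; these bounded terms are collected into $\rho$.

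The step I expect to be the main obstacle is the reconciliation of the two different denominators $P^{3/2}$ and $Q^{3/2}$. After applying the identity, the second-order remainder is the Hessian quadratic form $Y:=\sum_{i,j}(Q\,\delta_{ij}-w_iw_j)u_{ij}$ times the scalar factor $\det(b)P^{-3/2}-\det(g)Q^{-3/2}$. Near $\Om_0$ one has $Y=\text{Hess}(u)(w^{\perp},w^{\perp})$ with $w^{\perp}=(w_2,-w_1)$, so $Y$ vanishes only to second order in $w$, while $P^{3/2}$ and $Q^{3/2}$ vanish to third order; a direct homogeneity count therefore leaves an $O(|w|^{-1})$ contribution, and genuine cancellation is needed to reach a bounded $\rho$. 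To close this gap I would not rely on the naive order estimate but exploit a sharper relation between $P$ and $Q$ coming from the area computation, namely $\escpr{w,\det(g)\,bw}=\det(g)P$ and the Jacobian identity behind \eqref{eq:jac}, which ties $P$ to $Q$ and to $\det(g)$ more tightly than positive definiteness alone; verifying that this forces the scalar factor to decay one order faster along the directions in which $Y$ survives is the delicate part of the argument, and it is here that the hypotheses $u\in C^2$, $g\in C^1$ and the explicit form $F(x,y)=(-y,x)$ must be used.
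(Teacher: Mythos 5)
Your expansion, the cancellation of the rotational first--order terms via $\divv F=0$ and $b_{12}-b_{21}=0$, and above all the adjugate identity $\escpr{w,bw}\,b-(bw)\otimes(bw)=\det(b)\big(\escpr{w,w}\,I-w\otimes w\big)$ are exactly the ``standard computation'' the paper invokes, and you have correctly located the real difficulty, which the paper's one--line proof (and its displayed formula for $\rho$, which accounts only for the first--order remainder and, incidentally, also omits the terms containing $\partial_k g^{ij}$) passes over: after the identity the Hessian parts of the two divergences are $\det(b)\,Y/P^{3/2}$ and $Y/Q^{3/2}$ with $Y=\operatorname{Hess}(u)(w^{\perp},w^{\perp})$, and these do \emph{not} differ by the constant $\det(g)$ times the second plus a bounded error. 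The step that fails is your proposed repair. The relation $\escpr{w,\det(g)bw}=\det(g)P$ is a tautology, and the Jacobian identity behind \eqref{eq:jac} ties $\det(g)P$ to $\escpr{w^{\perp},g\,w^{\perp}}$, not to $Q$; neither forces $\det(b)P^{-3/2}-\det(g)Q^{-3/2}$ to decay faster than $|w|^{-3}$. A constant conformal metric already kills the statement: for $b=\la I$ one has $P=\la Q$, hence $\divv(bw/P^{1/2})=\la^{1/2}\divv(w/Q^{1/2})$ exactly, whereas the lemma demands the factor $\det(g)=\la^{-2}$; taking $u(x,y)=x^{2}$, for which $w=(2x-y,x)$ has an isolated zero at the origin and $\divv(w/Q^{1/2})=2x^{2}/Q^{3/2}\sim 2/|x|$ along $y=2x$, the difference of the two sides is genuinely unbounded. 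So no hidden cancellation exists, and the additive identity with the constant factor $\det(g)$ is false for general $C^{2}$ functions $u$ unless $b\equiv I$.

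The constructive way out, which your computation already contains, is to make the comparison multiplicative: $\divv\big(bw/P^{1/2}\big)=c\,\divv\big(w/Q^{1/2}\big)+\rho$ with $c=\det(b)(Q/P)^{3/2}$ pinched between two positive constants by the uniform ellipticity of $b$, and $\rho$ the cubic--over--$P^{3/2}$ first--order remainder, which is bounded. This yields the equivalence ``$\divv(bw/P^{1/2})$ bounded $\Leftrightarrow$ $\divv(w/Q^{1/2})$ bounded'', which is all that Theorem \ref{singularsetth} uses (via Lemma \ref{1variation:graph} and \cite[Theorem~B]{CJHMY}); note also that under that theorem's standing hypothesis of bounded mean curvature both Hessian terms are individually bounded, so the additive conclusion of the lemma becomes true there for trivial reasons. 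Replace the constant $\det(g)$ by the bounded positive function $c$ (or state the boundedness equivalence directly) and your argument closes.
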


\begin{proof}
The proof is a standard computation. We only note that $\rho$ is of the form 
\[
\frac{\rho_1(b)(u_x-y)^3+\rho_2(b)(u_y+x)^3+\rho_3(b)(u_x-y)^2(u_y+x)+\rho_4(b)(u_x-y)(u_y+x)^2}{(g^{11}(u_x-y)^2+2g^{12}(u_x-y)(u_y+x)+g^{22}(u_y+x)^2)^{3/2}},
\]
where $\rho_i(b)$ are sums and products of the coefficients $g^{ij}$.
\end{proof}

Let $\Sg\subset M$ a $\mathcal{C}^2$ surface and let $p\in\Sg_0$. Then there exists a neighborhood $U$ of $p$ that is a Darboux chart and $\Sg$ can be view as a graph $G_u$ in $(\rr^{3}, g_{\hhh_0},\omega_0)$ above defined. The projection $\Om_0$ of the singular set in $(G_u)_0$ do not depend by the metric $g_{\hhh_0}$. The characteristic curves in $G_u$ with respect to $g_{\hhh_0}$ and the standard Heisenberg metric $g_0$ coincide, as they are determined by $TG_u\cap \hhh$. This implies

\begin{theorem}\label{singularsetth}Let $\s$ be a $\mathcal{C}^2$ oriented immersed surface with constant mean curvature $H$ in $(M,g_\hhh,\omega)$. Then the singular set $\s_{0}$ consists of isolated points and $\mathcal{C}^1$ curves with non-vanishing tangent vector. Moreover, we have 
\begin{enumerate} 
\item[(i)]if $p\in\s_{0}$ is isolated then there is $r > 0$ and $\lambda\in 
\mathbb{R}$ with $|2\lambda| = |H |$ such that the set described as 
\begin{equation*}
D_{r} (p) = \{\gamma^{\lambda}_{p,v}  (s)| v \in T_{p}\s, |v| = 1, s \in [0, r)\}, 
\end{equation*}
is an open neighborhood of $p$ in $\s$, where $\gamma^{\lambda}_{p,v}$ denote the characteristic curve starting from $p$ in the direction $v$ with curvature $\lambda$ \eqref{eq:carcurvesph};
\item[(ii)] if  $p$ is not isolated, it is contained in a $\mathcal{C}^1$ curve 
$\Gamma\subset\s_{0}$. Furthermore there is a neighborhood $B$ of $p$ in $\s$ such that $B-\Gamma$ is the 
union of two disjoint connected open sets $B_{+}$ and $B_{-}$ contained in $\s-\s_{0}$, 
and $\nu_{h}$ extends continuously to $\Gamma$ from both sides of $B-\Gamma$, i.e., the limits 
\begin{equation*}
\nu_{h}^{+}(q)=\lim\limits_{x\rightarrow q, x\in B_{+}} \nu_{h}(x), \hspace{1cm} \nu_{h}^{-}(q)=\lim\limits_{x\rightarrow q, x\in B_{-}} \nu_{h}(x)
\end{equation*}
exist for any $q\in \Gamma\cap B$. These extensions satisfy $\nu_{h}^{+}(q)=-\nu_{h}^{-}(q)$. Moreover, there are exactly two characteristic curves $\gamma^{\lambda}_{1}\subset B_{+}$ and  
$\gamma^{\lambda}_{2}\subset B_{-}$ starting from  $q$ and meeting transversally $\Gamma$ at $q$ with initial velocities $(\gamma^{\lambda}_{1})^{\prime}(0)=-(\gamma^{\lambda}_{2})^{\prime}(0)$. The curvature $\lambda$ does not depend on $q$ and satisfies $| \lambda | = |H|$.
\end{enumerate}
\end{theorem}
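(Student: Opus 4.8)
The plan is to reduce the statement to the known structure of the singular set in the Heisenberg group, established in \cite{CJHMY} and \cite{Ga-Ri}, by passing to a Darboux chart. Fix $p\in\s_0$ and, by Darboux's Theorem \cite{Bl}, choose a neighborhood $U$ of $p$ on which $\omega=\omega_0$; in the associated coordinates $\s$ is a graph $G_u$ over an open set $\Om\subset\rr^2$, exactly as set up before the statement. The two facts that make the reduction work are that the projection $\Om_0=\{z\in\Om:\n u+F=0\}$ of the singular set and the characteristic direction field $TG_u\cap\hhh$ are both independent of the chosen metric $g_{\hhh_0}$. Consequently the singular set of $\s$ and its characteristic curves, regarded as \emph{unparametrised} curves, coincide with those of the same graph $G_u$ viewed inside the standard Heisenberg group $\hh^1\equiv(\rr^3,g_0,\omega_0)$.

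The first step, which I expect to be the main obstacle, is to transfer the topological and qualitative description of $\s_0$. The difficulty is that a Darboux map is only a contact transformation and does not preserve the sub-Riemannian area, hence it does \emph{not} preserve the mean curvature: a surface with constant mean curvature with respect to $g_{\hhh_0}$ need not have constant mean curvature with respect to $g_0$. What survives is boundedness. Indeed, writing the mean curvature of $G_u$ with respect to $g_{\hhh_0}$ as in Lemma \ref{1variation:graph} and applying Lemma \ref{equivcurvaturamedia}, constancy of $H$ yields
\[
\det(g)\,\divv\bigg(\frac{\n u+F}{\escpr{\n u+F,\n u+F}^{1/2}}\bigg)=\mu-H-\rho \qquad\text{on }\Om\setminus\Om_0 ,
\]
and since $g_{\hhh_0}$ is positive definite $\det(g)$ is bounded away from zero, while $\mu$ and $\rho$ are bounded. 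Hence the Heisenberg mean curvature of $G_u$ is bounded, and the structure theorem of \cite{CJHMY} for surfaces of bounded mean curvature in $\hh^1$ applies: it gives that $\s_0$ consists of isolated points and of $\mathcal{C}^1$ curves with non-vanishing tangent, together with the behaviour of $\nu_h$ near $\s_0$, namely the existence of the one-sided limits $\nu_h^\pm$ and the relation $\nu_h^+=-\nu_h^-$ in part (ii), the fact that a full neighborhood of an isolated point is swept out by the characteristic curves issuing from it in part (i), and that at each point of a singular curve exactly two characteristic curves meet transversally with opposite initial velocities. All of these are assertions about the singular set and the characteristic direction field, which are metric independent, so they pass verbatim back to $(M,g_\hhh,\omega)$.

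It then remains to pin down the curvature $\lambda$ of the characteristic curves, and here I would abandon the Heisenberg picture and argue directly in the original metric, where $\s$ has constant mean curvature $H$ by hypothesis. By the definition of mean curvature \eqref{eq:meancur} and the characteristic-curve equation \eqref{eq:carcurvesph}, every characteristic curve of $\s$ satisfies $\n_Z Z+c_1\lambda\,J(Z)=0$ with one and the same constant $\lambda$ fixed by the relation $H=c_1\lambda$; this is precisely the value asserted in the statement and it is manifestly independent of the base point $q$ in part (ii). Feeding this common value of $\lambda$ into the foliation and the transversal pairing already obtained completes the description. The crux is thus the separation carried out in the second step: the purely topological and geometric content of the singular-set theorem requires only bounded mean curvature, which Lemma \ref{equivcurvaturamedia} secures after the change of metric, whereas the metric-dependent datum --- the constant curvature of the characteristic curves --- is read off in the original pseudo-hermitian structure through \eqref{eq:carcurvesph}, where the constancy of $H$ genuinely holds.
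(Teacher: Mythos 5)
Your proposal follows essentially the same route as the paper: reduce to a Darboux chart where $\s$ is a graph $G_u$, use Lemma \ref{1variation:graph} and Lemma \ref{equivcurvaturamedia} to see that constancy of $H$ forces the Heisenberg mean curvature of $G_u$ to be bounded, invoke the structure theorems of \cite{CJHMY}, and read off the common curvature $\lambda$ from \eqref{eq:carcurvesph} back in the original metric; in fact you make explicit the boundedness transfer that the paper only cites. The one point stated a bit loosely is calling the behaviour of $\nu_h$ ``metric independent'' --- $\nu_h$ itself depends on $g_\hhh$, but the existence of the one-sided limits and the relation $\nu_h^+=-\nu_h^-$ still transfer because $\nu_h=b(\n u+F)/\escpr{\n u+F,b(\n u+F)}^{1/2}$ is a smooth invertible function of the Heisenberg normal, which is exactly how the paper justifies part (ii).
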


\begin{proof} By  \cite[Theorem~B]{CJHMY}, Lemma \ref{1variation:graph} and Lemma \ref{equivcurvaturamedia}, $\s_{0}$ consists of isolated points and $\mathcal{C}^1$ curves with non-vanishing tangent vector. Also (i) follows easily.

Writing 
\[
\nu_h=\frac{b(\n u+F)}{\escpr{\n u+F,b(\n u+F)}^{1/2}}
\]
because of \cite[Theorem~3.10, Corollary~3.6]{CJHMY}, we get (ii).
\end{proof}

\begin{corollary}\label{charmeetsingular} Let $\s$ be a $\mathcal{C}^2$  minimal surface with singular set $\s_{0}$. Then $\s$ is area stationary if and only if the characteristic curves meet the singular curves orthogonally with respect the metric $g$. 
\end{corollary}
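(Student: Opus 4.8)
The plan is to reduce area-stationarity to a test against normal variations, turn the first variation of a minimal surface into a single boundary integral supported on the singular curves, and then recognize that the vanishing of that integral for all test functions is exactly the orthogonality condition.

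First I would argue that it suffices to test against normal variations $U=uN$ with $u\in\mathcal{C}^1_0(\s)$. Writing an arbitrary compactly supported field as $U=uN+U^{\mathrm{tan}}$ with $u=g(U,N)$, by linearity of \eqref{var} the tangential part contributes only a term of the form $\int_\s\divv_\s(|N_h|U^{\mathrm{tan}})\,d\s$, as in Corollary \ref{1var:tang}. Applying the Riemannian divergence theorem on $\s-\s_0^\eps$, where $\s_0^\eps$ is an $\eps$-tube around $\s_0$, the resulting boundary terms live on $\partial\s_0^\eps$ and carry a factor $|N_h|$, which tends to $0$ on $\s_0$; since $\partial\s_0^\eps$ has bounded length, the tangential contribution vanishes as $\eps\to0$ (assuming $\partial\s=\emptyset$, or that $U$ is supported away from $\partial\s$). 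Hence the first variation depends only on $u$.

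Next I would insert minimality into Corollary \ref{1var:normal}. Since $H=-g(\n_Z\nu_h,Z)=0$, the interior term drops and
\[
\frac{d}{ds}\Big|_{s=0}A(\varphi_s(\s))=-\int_{\s-\s_0}\divv_\s\big(u\,g(N,T)\,S\big)\,d\s.
\]
I would evaluate this again by the divergence theorem on $\s-\s_0^\eps$ and let $\eps\to0$. Isolated singular points contribute nothing, since their bounding circles have length $O(\eps)$ while the integrand stays bounded, so only the $\mathcal{C}^1$ singular curves $\Gamma$ of Theorem \ref{singularsetth}(ii) survive. Along such a $\Gamma$ I would use that $N$ is continuous, so $g(N,T)\to b\in\{\pm1\}$ from both sides, while $\nu_h^+=-\nu_h^-$; consequently $S=g(N,T)\nu_h-|N_h|T\to b\,\nu_h^\pm$ satisfies $S^+=-S^-$. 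Writing $\nu$ for the unit normal of $\Gamma$ inside $T\s$ pointing into $B_+$, the outward conormals of $\s-\s_0^\eps$ on the two sides are $-\nu$ and $+\nu$; the two sign flips (of $S$ and of the conormal) cancel against each other, so the two sides add rather than cancel, and I expect to obtain
\[
\frac{d}{ds}\Big|_{s=0}A(\varphi_s(\s))=2\int_{\Gamma}u\,g(\nu_h^+,\nu)\,dl,
\]
summed over all singular curves. Then I would interpret the factor geometrically: on $\Gamma$ we have $T_q\s=\hhh_q$, and both $\{\nu_h^+,Z^+\}$ with $Z^+=J(\nu_h^+)$ the limiting characteristic direction (orthonormal by \eqref{Jrotazione}) and $\{\dot\Gamma,\nu\}$ are orthonormal bases of this horizontal plane. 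Thus $g(\nu_h^+,\nu)=0$ iff $\nu_h^+\parallel\dot\Gamma$ iff $Z^+=J(\nu_h^+)\perp\dot\Gamma$, i.e. iff the characteristic curve meets $\Gamma$ orthogonally. Since stationarity means the above integral vanishes for every $u\in\mathcal{C}^1_0(\s)$, the fundamental lemma of the calculus of variations forces $g(\nu_h^+,\nu)\equiv0$ on each $\Gamma$, which is precisely the orthogonality condition, and the converse is immediate.

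The main obstacle I anticipate lies in the middle step: making the tube-domain limit rigorous, checking that isolated points and the $|N_h|$-weighted tangential terms genuinely drop out, and—most delicately—pinning down the orientations of the two conormals so that the jump $\nu_h^+=-\nu_h^-$ yields a coherent doubled boundary integral rather than a cancellation. All the structural input for this (existence of two-sided limits of $\nu_h$, and the two transversal characteristic curves with opposite initial velocities) is exactly what Theorem \ref{singularsetth}(ii) provides.
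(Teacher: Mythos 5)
Your proposal is correct and follows essentially the same route as the paper, which simply defers to the Heisenberg case \cite[Theorem~4.16]{Ri-Ro}: kill the tangential contribution via the $|N_h|$ weight, reduce to Corollary \ref{1var:normal}, use $H=0$ to leave only the divergence term, and convert it by a tube-limit into a line integral $\pm2\int_\Gamma u\,g(\nu_h^+,\nu)\,dl$ whose vanishing for all $u$ is the orthogonality condition. Your handling of the two sign flips ($S^+=-S^-$ versus the opposite outward conormals) and of the isolated singular points is exactly the intended adaptation, so nothing further is needed.
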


The proof is a straightforward adaptation of the Heisenberg one, \cite[Theorem~4.16]{Ri-Ro}. Another version of the last corollary is presented in \cite[Proposition~6.2]{CHY} and \cite[p.~20]{ChengHwang2nd}.

\begin{remark}  \cite[Proposition~4.19]{Ri-Ro} implies that, for $\s$ a $\mathcal{C}^2$ oriented immersed area-stationary surface (with or without a volume constraint), any singular curve of $\s$ is a $\mathcal{C}^2$ smooth curve. 
\end{remark}

\begin{remark}
Another approach to characterize the local behavior of  the singular set is provided in \cite{Na}, where the author constructs a circle bundle over the surface and studies the projection of the singular set.   
\end{remark}

Now we are able to generalize \cite[Theorem~E]{CJHMY} to general three-dimensional contact sub-Riemannian manifolds.

\begin{theorem}\label{genusbound}
Let $\s$ be a $\mathcal{C}^2$ closed, connected surface immersed in a three-dimensional contact sub-Riemannian manifold $M$, with bounded mean curvature. Then $g(\s)\leq 1$, where $g(\s)$ denote the genus of $\s$.
\end{theorem}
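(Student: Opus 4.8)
The plan is to run a Poincar\'e--Hopf index argument for the characteristic \emph{line} field on $\s$, reducing the bounded--mean--curvature case to the analysis already available in the pseudo-hermitian model. First I would fix $p\in\s_0$ and pass to a Darboux chart centred at $p$, so that near $p$ the surface is a graph $G_u$ over $\Om\subset\rr^2$ inside $(\rr^3,g_{\hhh_0},\omega_0)$. By Lemma \ref{1variation:graph} and Lemma \ref{equivcurvaturamedia}, together with the transformation rule for the mean curvature under Darboux's diffeomorphism from \cite{Ga-Ri}, the hypothesis that $\s$ has bounded mean curvature yields a bounded right-hand side for the graph equation in the chart. Moreover the projection $\Om_0$ of the singular set and the characteristic curves depend only on the contact structure: they are cut out by $\n u+F$ and by $TG_u\cap\hhh$, hence are independent of the chosen horizontal metric. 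This is exactly the situation of \cite[Theorem~B]{CJHMY}, so $\s_0$ consists of isolated points and $\mc{C}^1$ curves with non-vanishing tangent, and the local models are the metric-independent ones recorded in Theorem \ref{singularsetth}.

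Next I would build the characteristic line field. On $\s-\s_0$ the characteristic vector field $Z=J(\nu_h)$ is a continuous unit tangent field, so $q\mapsto\rr\,Z_q$ is a line field there. The key observation is that it extends continuously across every singular curve $\Ga\subset\s_0$: by the local model of Theorem \ref{singularsetth}(ii) the two one-sided limits of the horizontal Gauss map satisfy $\nu_h^+=-\nu_h^-$ along $\Ga$, whence $Z^+=J(\nu_h^+)=-J(\nu_h^-)=-Z^-$. The vectors are opposite but the \emph{lines} $\rr Z^\pm$ agree, and they are non-zero because characteristic curves meet $\Ga$ transversally with non-vanishing velocity. Thus the line field has no singularity along the singular curves, and its only singularities are the isolated points of $\s_0$. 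At such an isolated point the radial model of Theorem \ref{singularsetth}(i) shows that the characteristic curves issue in every direction and sweep out a full neighbourhood, so the line field has the dicritical (radial) picture and index $+1$.

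Finally I would invoke the Poincar\'e--Hopf theorem for line fields on a closed surface: the sum of the indices of a line field with isolated singularities equals the Euler characteristic. Here each index equals $+1$, so $\chi(\s)$ equals the number $k\geq0$ of isolated singular points; in particular $\chi(\s)\geq0$, which for an orientable closed surface gives $g(\s)\leq1$. If $\s$ is only immersed and possibly non-orientable I would pass to the orientable double cover, where the pulled-back line field again has nonnegative index sum, forcing $\chi\geq0$ and hence the stated genus bound; the limiting cases (empty singular set or only closed singular curves) give $\chi(\s)=0$ and are consistent with $g(\s)=1$.

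The main obstacle is the content of the first paragraph: one must certify that the whole local description of the singular set --- isolated points with the radial structure, and curves carrying the sign-reversal $\nu_h^+=-\nu_h^-$ --- survives the passage from constant to merely \emph{bounded} mean curvature and from the Heisenberg metric to an arbitrary horizontal metric $g_\hhh$. This is precisely where the metric-independence of the characteristic directions $TG_u\cap\hhh$ and the control of the mean curvature under Darboux charts (Lemma \ref{equivcurvaturamedia} and \cite{Ga-Ri}) do the real work, allowing \cite[Theorem~B]{CJHMY} and the topological argument of \cite[Theorem~E]{CJHMY} to be applied essentially verbatim. Checking that these ingredients still reproduce index $+1$ at the isolated points, without leaning on a constant-curvature hypothesis, is the delicate step.
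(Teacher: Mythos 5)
Your argument is essentially the paper's own proof: both reduce to the local structure of the singular set via Darboux charts and the CJHMY analysis, observe that the characteristic line field extends across singular curves and has index $+1$ at each isolated singular point, and conclude $\chi(\s)\geq 0$ by the Hopf index theorem. The only cosmetic difference is that the paper assembles a global vector field with a partition of unity subordinate to the Darboux covering and quotes \cite[Lemma~3.8]{CJHMY} for the index, whereas you work directly with the line field and the sign-reversal $\nu_h^+=-\nu_h^-$; the substance is the same.
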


\begin{proof} By Theorem \ref{singularsetth} the singular set $\s_0$ consists of  singular curves and isolated singular points. The  line field associated to the characteristic foliation, extended to the singular curves, has a  contribution to the index only due to the isolated singular points, Theorem \ref{singularsetth}.  Now consider a partition of unity $\{\eta_i\}_{i\in I}$ subordinate to a covering of $\s$ with Darboux's charts $\{U_i\}_{i\in I}$. By \cite[Lemma~3.8]{CJHMY} the index associated to the characteristic line field with respect to the Heisenberg metric in the Darboux coordinates is 1 and follows that the index of the vector field 
\[
\sum\limits_{i\in I} \eta_i( \varphi^{-1}_{i}(Z_0) )
\]
is 1 in each singular point of $\s$, since the Darboux's diffeomorphism preserve the index, \cite[Lemma~27, p.~446]{spivak1}. Here $\varphi_i$ denotes the Darboux's diffeomorphism in each chart $U_i$ and $Z_0$ denote the characteristic vector field associate to the Heisenberg metric in $\varphi_i(U_i)$. We get $\chi(\s)\geq 0$, by the Hopf index Theorem, \cite{spivak1}. 

On the other hand for a closed surface $\chi(\s)=2-2g(\s)$, which implies $g(\s)\leq 1$. 
\end{proof}

\begin{remark} When $\s$ is a compact $\mathcal{C}^2$ surface without boundary in a three-dimensional pseudo-hermitian sub-Riemannian manifold, Theorem I in \cite{CJHMY2} implies immediately $\chi(\s)\geq 0$. Then $g(\s)\leq 1$.
\end{remark}

\section{The first variation formula for $\mc{C}^2_\hhh$ surfaces}
\label{sec:1c2h}

Now we present a first variation formula for surfaces of class $\mathcal{C}^2_\hhh$ using variations supported in the non-singular set. Given a surface $\s$ of class $\mathcal{C}^2_\hhh$, we can express $\s$ as the zero level set of a function $f\in\mc{C}^2_\hhh$ with non-vanishing horizontal gradient. 

\begin{remark}\label{convergenzaHhei} In $\hh^1$, by \cite[Proposition~1.20]{Fo-St}, see also \cite[Lemma~2.4]{Ba-SC-Vi} and the proof of Theorem 6.5, step 1, in \cite{FSSC}, the family of smooth surfaces $\{\s_j\}_{j\in\nn}=\{p\in M:f_j(p)=0\}$, where $f_j:=\rho_j \ast f$ and $\rho_j$ are the standard Friedrichs' mollifiers, converge to $\s$ on compact subsets of $\s-\s_0$. Furthermore also the second order derivatives of $f_j$ with respect to horizontal vectors fields converge to the second derivatives of $f$.  
We denote by $Z_j,(\nu_h)_j$ and $N_j$, respectively, the characteristic vector field, the horizontal unit normal and the unit normal of $\s_j$. Furthermore let $(Z_j(p),S_j(p))$ be an orthonormal basis of $T_p\s_j$. We have that
\[
\nu_h=\frac{(Xf)X+(Yf)Y}{\sqrt{(Xf)^2+(Yf)^2}}, \quad Z=\frac{-(Yf)X+(Xf)Y}{\sqrt{(Xf)^2+(Yf)^2}}
\]
and
\[
(\nu_h)_j=\frac{(Xf_j)X+(Yf_j)Y}{\sqrt{(Xf_j)^2+(Yf_j)^2}}, \quad Z_j=\frac{-(Yf_j)X+(Xf_j)Y}{\sqrt{(Xf_j)^2+(Yf_j)^2}}
\]
so $Z_j$ ( resp. $(\nu_j)$) converges to $Z$ (resp. $\nu_h$) with theirs horizontal derivatives. On the other hand
\[
N=\frac{(Xf)X+(Yf)Y+(Tf)T}{\sqrt{(Xf)^2+(Yf)^2+(Tf)^2}}
\]
and
\[
N_j=\frac{(Xf_j)X+(Yf_j)Y+(Tf_j)T}{\sqrt{(Xf_j)^2+(Yf_j)^2+(Tf_j)^2}},
\]
which implies that $N_j$ (resp. $S_j$) converges to $N$ (resp. $S$) but there are not convergence of theirs derivatives.

\end{remark}

\begin{lemma}\label{ConvergenzaH}
Let $\s$ be a $\mc{C}^1$ surface immersed in $M$, such that the derivative in the $Z$-direction of $\nu_h$ exists and is continuous. Assume $\s_0=\emptyset$. Then exists a family of smooth surfaces $\{\s_j\}_{i\in\nn}$ such that 
\[
\lim\limits_{j\rightarrow +\infty}g(\n_{Z_j}(\nu_h)_j,Z_j)=g(\n_Z\nu_h, Z)
\]
uniformly on compact subsets of $\s$.
\end{lemma}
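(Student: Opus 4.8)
The plan is to approximate the $\mathcal{C}^1$ surface $\s$ by the smooth surfaces $\s_j$ obtained by mollifying the defining function $f$, as set up in Remark~\ref{convergenzaHhei}, and then prove that the mean curvature of $\s_j$ converges to that of $\s$ uniformly on compact sets. The quantity $g(\n_{Z_j}(\nu_h)_j, Z_j)$ is, up to sign, exactly the mean curvature of $\s_j$, so the entire difficulty is reduced to controlling this expression in terms of the horizontal derivatives of $f_j$, for which we already have convergence.

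\emph{First} I would express the horizontal unit normal and the characteristic vector field explicitly in terms of the horizontal derivatives of the defining function. Using the formulas from Remark~\ref{convergenzaHhei},
\[
(\nu_h)_j=\frac{(Xf_j)X+(Yf_j)Y}{\sqrt{(Xf_j)^2+(Yf_j)^2}},\qquad Z_j=\frac{-(Yf_j)X+(Xf_j)Y}{\sqrt{(Xf_j)^2+(Yf_j)^2}},
\]
I would compute $g(\n_{Z_j}(\nu_h)_j,Z_j)$ by the product and quotient rules. The key structural point is that the pseudo-hermitian connection coefficients (the $\n_X Y$ etc.) are smooth and depend only on the ambient manifold, not on $j$; hence the resulting expression is a rational function whose numerator is a polynomial in the first and second horizontal derivatives $Xf_j, Yf_j, X(Xf_j), X(Yf_j), Y(Xf_j), Y(Yf_j)$ and whose denominator is a positive power of $(Xf_j)^2+(Yf_j)^2$.

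\emph{Then} I would invoke the two convergence facts from Remark~\ref{convergenzaHhei}: the first horizontal derivatives $Xf_j, Yf_j$ converge to $Xf, Yf$ uniformly on compact subsets of $\s$, and the second horizontal derivatives of $f_j$ likewise converge to those of $f$ (which exist and are continuous precisely because $\n_Z\nu_h$ exists and is continuous, so $g(\n_Z\nu_h,Z)$ is well defined). Since $\s_0=\emptyset$, the denominator $(Xf)^2+(Yf)^2=|\nu_h|^2$ is bounded away from zero on any compact subset $K\subset\s$; by uniform convergence the denominators for $\s_j$ are uniformly bounded below on $K$ for $j$ large. A rational function with nonvanishing denominator is continuous, so uniform convergence of all the numerator and denominator arguments forces uniform convergence of $g(\n_{Z_j}(\nu_h)_j,Z_j)$ to $g(\n_Z\nu_h,Z)$ on $K$.

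\emph{The main obstacle} will be justifying that the second horizontal derivatives of $f_j$ converge to the corresponding derivatives of $f$ under the sole hypothesis that $\n_Z\nu_h$ exists and is continuous, since $f$ is only assumed $\mathcal{C}^1$ in the Euclidean sense. The point is that $g(\n_Z\nu_h,Z)$ is a second-order horizontal quantity, and one must check that it equals a continuous combination of the horizontal Hessian entries of $f$, so that mollification of $f$ produces the expected convergence of exactly those second-order horizontal derivatives appearing in the mean-curvature expression. This is the content of the cited results in Remark~\ref{convergenzaHhei} (Proposition~1.20 of \cite{Fo-St} and the mollifier convergence in \cite{FSSC}), and once it is granted, the uniform convergence on compact sets follows routinely from the continuity of the rational expression.
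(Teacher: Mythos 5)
Your proposal is correct and follows essentially the same route as the paper: both reduce the claim to the convergence of the first and second horizontal derivatives of the mollified defining functions $f_j$ established in Remark~\ref{convergenzaHhei}, and then observe that $g(\n_{Z_j}(\nu_h)_j,Z_j)$ is a continuous (rational) expression in these quantities with smooth, $j$-independent coefficients and a denominator bounded below on compact sets because $\s_0=\emptyset$. The only step the paper makes explicit that you leave implicit is the localization to a Darboux chart --- needed so that mollification of $f$ makes sense on a general contact sub-Riemannian $M$ --- together with the remark that the arbitrary-metric $\nu_h$ differs from the Heisenberg one only by the smooth coefficients $g^{ij}$ while the $Z$-direction is metric-independent; your ``smooth connection coefficients'' observation covers the same point in substance.
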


\begin{proof} It is sufficient to prove the result locally in a Darboux's chart. So we consider $\s$ in $(\rr^3,\hhh_0,g_{\hhh_0})$, where $g_{\hhh_0}$ is an arbitrary positive definite smooth metric in $\mathcal{H}_0$. We denote by $(\nu_h)_{0}$ the horizontal unit normal with respect the Heisenberg metric $g_0$. By Remark \ref{convergenzaHhei} the statement holds in the Heisenberg metric. 
As in \eqref{eq:tilden} and \eqref{eq:modtilden} we have
\[
(\nu_h)_j=\frac{(g^{11}X(f_j)+g^{12}Y(f_j))X+(g^{12}X(f_j)+g^{22}Y(f_j))Y}{\sqrt{\escpr{(X(f_j),Y(f_j)),b(X(f_j),Y(f_j))}}}
\]
and
\[
((\nu_h)_j)_{0}=\frac{X(f_j)\,X+Y(f_j)\,Y}{\sqrt{\escpr{(X(f_j),Y(f_j)),b(X(f_j),Y(f_j))}}}.
\]
Similar expressions hold for $\nu_h$ and $(\nu_h)_0$. The $Z$-direction does not depend on the metric, since it is determined by $T\Sg\cap\hhh$. Furthermore, since the coefficients $g^{il}$ are smooth, the convergence also holds in the arbitrary metric.

\end{proof}

Now we are able to prove

\begin{proposition}\label{1var:c2h} Let $\s$ be an oriented immersed $\mathcal{C}^2_\hhh$ surface in a contact sub-Riemannian three-dimensional manifold $(M,g_\hhh,\omega)$. Then the first variation of the area induced by the vector field  $U=f \nu_h+lZ+hT$, with  $f,l,h\in\mathcal{C}^1_{0}(\s-\s_0)$, is 
\begin{equation*}
\frac{d}{ds}\Big{|}_{s=0} A(\varphi_{s}(\s))=-\m_{\s} g(U,N)\,Hd\s.
\end{equation*}
\end{proposition}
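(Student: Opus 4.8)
The plan is to use the linearity of the general first variation \eqref{var} in the deforming field $U$, reduce to a normal-plus-tangential decomposition along $\s$, and then cure the lack of Euclidean $\mc{C}^2$ regularity by the smooth approximation for which Lemma \ref{ConvergenzaH} is tailored.

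First I would rewrite the frame. Inverting $N=g(N,T)\,T+|N_h|\,\nu_h$ together with \eqref{def:S}, and using $|N|=1$, gives $\nu_h=|N_h|\,N+g(N,T)\,S$ and $T=g(N,T)\,N-|N_h|\,S$. Hence
\begin{equation*}
U=\big(f|N_h|+h\,g(N,T)\big)\,N+l\,Z+\big(f\,g(N,T)-h|N_h|\big)\,S,
\end{equation*}
so $g(U,N)=f|N_h|+h\,g(N,T)$ and the tangential part is $U^{\mathrm{tan}}=l\,Z+(f\,g(N,T)-h|N_h|)\,S$. Because the integrand of \eqref{var} is linear in $U$, the first variation is the sum of the contributions of the normal field $g(U,N)\,N$ and of $U^{\mathrm{tan}}$.

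On a smooth (in particular $\mc{C}^2$) surface I would evaluate these two contributions by the corollaries already proved. Applying Corollary \ref{1var:normal} to $g(U,N)\,N$, and recalling the definition \eqref{eq:meancur} $H=-g(\n_Z\nu_h,Z)$, the normal contribution equals $\m_\s g(U,N)\,g(\n_Z\nu_h,Z)\,d\s=-\m_\s g(U,N)\,H\,d\s$; since $f,l,h\in\mc{C}^1_0(\s-\s_0)$ we have $g(U,N)\in\mc{C}^1_0(\s-\s_0)$, so the divergence term in Corollary \ref{1var:normal} drops out. Applying Corollary \ref{1var:tang} to $U^{\mathrm{tan}}$, whose coefficients also lie in $\mc{C}^1_0(\s-\s_0)$, the tangential contribution is $\m_\s\divv_\s(|N_h|\,U^{\mathrm{tan}})\,d\s$, which vanishes by the Riemannian divergence theorem. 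Summing gives $-\m_\s g(U,N)\,H\,d\s$, the claimed formula in the smooth case. As a check this also drops straight out of \eqref{var}: the $Z$-terms $c_1g(N,T)\,l+|N_h|\,Z(l)$ integrate to zero, being the first variation of the tangential field $lZ$, while $-S(h)+|N_h|\,h\,g(\tau(Z),Z)$ reassembles into $-h\,g(N,T)\,H$.

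To descend to class $\mc{C}^2_\hhh$ I would approximate $\s$, on compact subsets of $\s-\s_0$, by the smooth surfaces $\s_j$ of Remark \ref{convergenzaHhei}, put $U_j:=f(\nu_h)_j+l\,Z_j+h\,T$, apply the smooth formula to each $\s_j$ to get $\frac{d}{ds}\big|_{0}A(\varphi_s(\s_j))=-\m_{\s_j}g(U_j,N_j)\,H_j\,d\s_j$, and let $j\to\infty$. By Remark \ref{convergenzaHhei} one has $(\nu_h)_j\to\nu_h$, $Z_j\to Z$, $N_j\to N$ and $d\s_j\to d\s$, so $g(U_j,N_j)\to g(U,N)$, while the decisive convergence $H_j\to H$ uniformly on compact subsets is exactly Lemma \ref{ConvergenzaH}. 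The main obstacle lives precisely here: Remark \ref{convergenzaHhei} warns that the non-horizontal derivatives of $N_j$ and $S_j$ do \emph{not} converge, so I must carry out the algebraic simplification on each smooth $\s_j$ first, discarding the tangential term as an exact divergence, and only afterwards pass to the limit in the reduced expression, which contains $N$ and $H$ but none of the divergent derivatives. The remaining technical point is to justify that the $s$-derivative of the areas of the approximating deformations converges to that of $\s$, i.e.\ a $\mc{C}^1$-in-$s$ convergence of the area functionals along the variation.
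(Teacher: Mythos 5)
Your smooth-case computation is correct and is only a cosmetic repackaging of the paper's: you split $U$ into $g(U,N)N$ plus a tangential field and invoke Corollaries \ref{1var:normal} and \ref{1var:tang}, whereas the paper splits $U$ into its $\nu_h$-, $Z$- and $T$-components of \eqref{var} and integrates by parts via Lemma \ref{devergenceteorem}; both give $-\int_\s g(U,N)H\,d\s$ for $\mc{C}^2$ surfaces. The genuine divergence is in how the approximation is deployed, and this is where your argument has a gap that the paper's does not. You apply the \emph{final} formula to each smooth approximant, obtaining $\frac{d}{ds}\big|_0 A(\varphi^j_s(\s_j))=-\int_{\s_j}g(U_j,N_j)H_j\,d\s_j$, and then need the left-hand sides to converge to $\frac{d}{ds}\big|_0 A(\varphi_s(\s))$ --- a $\mc{C}^1$-in-$s$ convergence of area functionals along \emph{different} variations of \emph{different} surfaces. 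You flag this as ``the remaining technical point'' but do not prove it, and it is not automatic: nothing in Remark \ref{convergenzaHhei} or Lemma \ref{ConvergenzaH} directly controls derivatives of areas of deformed approximants.

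The paper avoids this issue entirely by exploiting that formula \eqref{var} is already valid for $\mc{C}^1$ surfaces, hence applies \emph{directly} to the $\mc{C}^2_\hhh$ surface $\s$. The approximation by $\s_j$ is then used only to manipulate the resulting \emph{integrals of fixed expressions}: each integral over $\s$ is written as $\lim_j$ of the same integral over $\s_j$, the integration by parts (Lemma \ref{devergenceteorem} plus the divergence theorem) is performed on the smooth $\s_j$, and one passes to the limit in the reduced integrand, which involves only $N_j$, $Z_j$, $(\nu_h)_j$ and $g(\n_{Z_j}(\nu_h)_j,Z_j)$ --- precisely the quantities that converge. If you want to rescue your version, the natural repair is exactly this observation: apply \eqref{var} on $\s$ and on each $\s_j$ and check term-by-term convergence of its integrand (note that $S_j(h)\to S(h)$ needs only $S_j\to S$ applied to the fixed function $h$, not convergence of derivatives of $S_j$), at which point your proof collapses into the paper's. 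So: correct strategy, correct smooth case, but the limiting step as you have arranged it is left unjustified and should be reorganized along the lines above.
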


\begin{proof} Due to the linearity of  \eqref{var} respect  to $U$ we can compute separatly the variations in the direction of $Z$, $\nu_h$ and $T$. By \eqref{var} the variation in the direction of $\nu_h$ becomes
\begin{equation*}
\begin{aligned}
\m_{\s}f|N_{h}|g(\n_{Z}\nu_h,Z) d\s.
\end{aligned}
\end{equation*}  
The variation produced by $T$ is \eqref{var}
\[
\m_{\s}\{-S(h)+hg(N,T)|N_h|g(\tau(Z),Z)\}d\s=\m_{\s}hg(N,T)g(\n_{Z}\nu_h,Z) d\s,
\]
because of 
\begin{equation*}
\begin{aligned}
\m_{\s}\{-S(h)&+hg(N,T)|N_h|g(\tau(Z),Z)\}d\s=\lim\limits_{j\rightarrow +\infty}\m_{\s_j}\{-S_j(h)+hg(N_j,T)|(N_h)_j|g(\tau(Z_j),Z_j)\}d\s_j\\
&=\lim\limits_{j\rightarrow +\infty} \Big\{ \m_{\s_j}hg(N_j,T) \,g(\n_{Z_j}\nu^j_h,Z_j)d\s_j-\m_{\s_j} \divv_{\s_j} (hg(N_j,T)^2S_j)d\s_j\Big\} \\
&=\m_{\s}hg(N,T) \,g(\n_{Z}\nu_h,Z)d\s,
\end{aligned}
\end{equation*}
where we have used the Riemannian divergence theorem in the last equality. In an analog way
\begin{equation*}
\begin{aligned}
\m_{\s}\{&c_1g(N,T)g(J(\nu_h),Z)+|N_h|Z(l)\}d\s=\lim\limits_{j\rightarrow +\infty}\m_{\s_j}\{c_1g(N_j,T)g(J((\nu_h))_j,Z_j)+|(N_h)_j|Z_j(l)\}d\s_j.
\end{aligned}
\end{equation*}
Now since $|N_h|Z(l)=Z(|N_h|l)-lZ(|N_h|)$, by Lemma \ref{conti} and Lemma \ref{devergenceteorem} we get
\begin{equation*}
\begin{aligned}
\m_{\s}\{&c_1g(N,T)g(J(\nu_h),Z)+|N_h|Z(l)\}d\s=\lim\limits_{j\rightarrow +\infty}\m_{\s_j}\divv_{\s_j}(l|(N_h)_j|Z_j)d\s_j=0,
\end{aligned}
\end{equation*}
so we have proved that the variation produced by $Z$ vanishes. Since $g(U,N)=f|N_h|+hg(N,T)$ we finally get 
\begin{equation*}
\frac{d}{ds}\Big{|}_{s=0} A(\varphi_{s}(\s))=-\m_{\s} g(U,N)\,Hd\s.
\end{equation*}
\end{proof}

\begin{remark}\label{osservazione:regolaritaottimale} Proposition \ref{1var:c2h} holds also for a $\mc{C}^1$ surface $\s$ in which $\nu_h$ (or equivantelly $Z$) is $\mc{C}^1$ in the $Z$-direction. It does not imply the $\mc{C}^2_\hhh$ regularity when $T\s\cap\hhh$ has dimension one. We thank F. Serra Cassano for pointing out this fact.  
\end{remark}

\section{Second variation formulas }

In this section we will compute a second variation formula for a minimal surface considering variations in the direction of $N$ and $T$ in the regular part and variation induced by the Reeb vector field supported near the singular set of the surface. We restrict ourself to the case of pseudo-hermitian manifolds. Consider the orthonormal basis $\{Z,\nu_{h},T\}$, we can compute
\begin{equation}\label{liebrachet}
\begin{aligned}
&[Z,\nu_{h}]= c_1T+\theta(Z)Z+\theta(\nu_{h})\nu_{h} \\
&[Z,T]=g(\tau(Z),Z)Z+\{g(\tau(Z),\nu_h)+\theta(T)\}\nu_{h}\\
&[\nu_{h},T]=  \{ g(\tau(Z),\nu_h) -\theta(T)\}Z+g(\tau(\nu_h),\nu_h)\nu_{h} 
\end{aligned}
\end{equation}
where $\theta$ defined in \ref{def:theta} and  we have computed $\theta(T)=-1+g(D_{T}\nu_{h},Z)$ using \eqref{phvsLC}. 

\begin{lemma} Let $\s$ be a $\mc{C}^2$ immersed oriented surface with constant mean curvature $H$  in a \phMM. We consider a point $p\in\s-\s_0$ and we denote by $\alpha:I\rightarrow \s-\s_0$ the integral curve of $S_p$. Then the results in Proposition \ref{jacobifields} hold with $U_{\alpha(\eps)}=Z_{\alpha(\eps)}$. Furthermore in 
$\s-\s_{0}$, the normal vector $N$ is $\mathcal{C}^{\infty}$ in the direction of the characteristic field $Z$.
\end{lemma}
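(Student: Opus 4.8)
The plan is to recognize the curves $\gamma_\eps$ of Proposition \ref{jacobifields} as the characteristic curves of $\s$ and then read off both assertions from that identification. First I would set $\lambda := H/c_1$, so that $H = c_1\lambda$. Since $\s$ is of class $\mc{C}^2$, the normal $N$ is $\mc{C}^1$, hence $g(N,T)$, $|N_h|$ and $\nu_h$ are $\mc{C}^1$ on $\s-\s_0$; by \eqref{def:S} the field $S$ is then $\mc{C}^1$, so its integral curve $\alpha$ through $p$ is well-defined and of class $\mc{C}^1$ with $\dot\alpha = S$, and $U := Z = J(\nu_h)$ is a $\mc{C}^1$ unit horizontal field along $\alpha$. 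This is exactly the data required by Proposition \ref{jacobifields}, provided the map $F(\eps,s)=\gamma_\eps(s)$ is well-defined.

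To verify this I would observe that the characteristic curve of $\s$ issuing from $\alpha(\eps)$ is the integral curve of $Z$ in $\s-\s_0$ and, because $\s$ has constant mean curvature $H=c_1\lambda$, satisfies \eqref{eq:carcurvesph}, namely $\n_Z Z + c_1\lambda J(Z)=0$, with $\gamma(0)=\alpha(\eps)$ and $\dot\gamma(0)=Z_{\alpha(\eps)}$. This is precisely the ODE defining $\gamma_\eps$, so by uniqueness for the (smooth) characteristic equation the two curves coincide. In particular $\gamma_\eps$ remains in the open set $\s-\s_0$ for $(\eps,s)$ near $(0,0)$, the map $F$ takes values in $\s-\s_0$, and all conclusions (i)--(iv) of Proposition \ref{jacobifields} hold with $U_{\alpha(\eps)}=Z_{\alpha(\eps)}$.

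For the regularity of $N$, since $F$ takes values in $\s$, both $\partial F/\partial s = \dot\gamma_\eps = Z$ and $\partial F/\partial\eps = V_\eps$ are tangent to $\s$; at $s=0$ one has $V_\eps(0)=\dot\alpha = S$, which is orthogonal to $Z$ and nonzero, so $\{Z,V_\eps\}$ is a basis of $T\s$ near $p$ and $F$ is a local parametrization. Fixing a characteristic curve $\gamma=\gamma_\eps$, it is $\mc{C}^\infty$ in $s$ as a solution of the smooth ODE \eqref{eq:carcurvesph}, whence $Z=\dot\gamma$ is $\mc{C}^\infty$ in $s$; and $V_\eps$ is $\mc{C}^\infty$ in $s$ by Proposition \ref{jacobifields}(i). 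Since $N$ is, up to the fixed orientation, the $g$-unit vector orthogonal to $\text{span}\{Z,V_\eps\}$, it is a smooth algebraic function of $Z$ and $V_\eps$, hence $\mc{C}^\infty$ along $\gamma$, i.e. in the direction of the characteristic field $Z$.

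The main obstacle is the identification of the geodesic-like family $\gamma_\eps$ with the characteristic curves of $\s$, which rests on the constant mean curvature hypothesis and on uniqueness for \eqref{eq:carcurvesph}. The feature worth emphasizing is that, although $\s$ is only $\mc{C}^2$ and the initial data depend merely $\mc{C}^1$ on $\eps$, the restriction to each characteristic curve is governed by smooth ODEs, so smoothness in the $s$-variable (the $Z$-direction) is automatic; no regularity in the transverse direction is asserted or needed.
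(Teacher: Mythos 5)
Your proposal is correct and follows essentially the same route as the paper: both identify the family $\gamma_\eps$ with the characteristic curves via the constant mean curvature equation \eqref{eq:carcurvesph}, invoke Proposition \ref{jacobifields}(i) for the smoothness of $\dot\gamma_\eps$ and $V_\eps$ along each characteristic curve, and recover $N$ as the normalized cross product of these two tangent fields. Your write-up merely makes explicit the identification step and the nondegeneracy of $\{Z,V_\eps\}$, which the paper leaves implicit.
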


\begin{proof} From (i) in Proposition \ref{jacobifields} and from \eqref{eq:carcurvesph} follows that $V_\eps$ and $\dot{\ga_\eps}$ are $\mc{C}^{\infty}$ along characteristic curves and we express the unit normal to $\s$ along $\ga_\eps$ by
\[
N=\pm\frac{\dot{\ga_\eps}\times V_\eps}{|\dot{\ga_\eps}\times V_\eps|},
\]
where $\times$ denote the cross product in $(M, g)$. We conclude that $N$ is $\mc{C}^{\infty}$ along $\ga_\eps$.
\end{proof}

\subsection{Second variation in the regular set}
Now we present a variation formula in the regular part of the surface induced by a vector field of the form $vN+wT$. 

\begin{lemma} Let $\Sigma$ be a $\mathcal{C}^2$ surface of constant mean curvature $H$ in a \phMM. Then we have
\begin{equation}\label{curvaturaT}
g(R(T,Z)\nu_h,Z)=-\nu_h(g(\tau(Z),Z))+Z(g(\tau(Z),\nu_h))-2\omega(\nu_h)g(\tau(Z),\nu_h)+2Hg(\tau(Z),Z).
\end{equation}
\end{lemma}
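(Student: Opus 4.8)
The statement is a pointwise identity in the orthonormal frame $\{Z,\nuh,T\}$, so the plan is to compute the left-hand side directly from the definition of $R$, reduce everything to the connection coefficients of $\n$ in this frame, and then eliminate the non-geometric coefficients by means of the Jacobi identity.

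First I would record the action of $\n$ on the frame. Because $\n$ is metric and $\n_vT=0$ by \eqref{nablaT}, the vector $\n_v\nuh$ is orthogonal to both $T$ and $\nuh$, hence proportional to $Z$; by \eqref{def:theta} the proportionality factor is $\theta(v)$, so $\n_v\nuh=\theta(v)Z$ for every $v\in TM$. Applying $J$, using $\n J=0$ from \eqref{nablaJ} together with $J(\nuh)=Z$ and $J(Z)=-\nuh$, gives $\n_vZ=-\theta(v)\nuh$. Finally \eqref{eq:meancur} identifies $\theta(Z)=-H$. These three facts are all the differential input the computation needs. Next I would expand
\[
R(T,Z)\nuh=\n_Z\n_T\nuh-\n_T\n_Z\nuh+\n_{[T,Z]}\nuh,
\]
substitute $\n_T\nuh=\theta(T)Z$ and $\n_Z\nuh=\theta(Z)Z$, and replace $[T,Z]=-[Z,T]$ by its value in \eqref{liebrachet}. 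Taking the scalar product with $Z$ (the $\nuh$-components cancel and play no role) and inserting $\theta(Z)=-H$, I expect to reach
\[
g(R(T,Z)\nuh,Z)=Z(\theta(T))-T(\theta(Z))+H\,g(\tau(Z),Z)-\big(g(\tau(Z),\nuh)+\theta(T)\big)\theta(\nuh).
\]

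The terms $Z(\theta(T))$ and $T(\theta(Z))$ are frame coefficients, not yet geometric, and the plan is to remove them with the first structure equation. Writing the Jacobi identity $[[Z,\nuh],T]+[[\nuh,T],Z]+[[T,Z],\nuh]=0$ and reading off its $Z$-component from the brackets \eqref{liebrachet}, where one uses the trace-freeness $g(\tau(\nuh),\nuh)=-g(\tau(Z),Z)$ (equivalently the standard anti-commutation $\tau J=-J\tau$ of the pseudo-hermitian torsion, \cite{Dr-To}), yields
\[
Z(\theta(T))-T(\theta(Z))-\theta(T)\theta(\nuh)=-\nuh(g(\tau(Z),Z))+Z(g(\tau(Z),\nuh))+H\,g(\tau(Z),Z)-\theta(\nuh)g(\tau(Z),\nuh).
\]
Substituting this into the expression for $g(R(T,Z)\nuh,Z)$ collapses the connection coefficients and leaves exactly
\[
g(R(T,Z)\nuh,Z)=-\nuh(g(\tau(Z),Z))+Z(g(\tau(Z),\nuh))-2\theta(\nuh)g(\tau(Z),\nuh)+2H\,g(\tau(Z),Z),
\]
which is the claim, the coefficient written $\omega(\nuh)$ in the statement being the connection one-form $\theta(\nuh)=g(\n_{\nuh}\nuh,Z)$.

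The main obstacle is the honest handling of the two connection coefficients $\theta(T)$ and $\theta(\nuh)$ produced by the raw curvature expansion: they are frame-dependent and not individually geometric, so the computation only closes once the Jacobi identity is invoked to trade $Z(\theta(T))$ and $T(\theta(Z))$ for derivatives of the torsion, with the trace-freeness of $\tau$ supplying the extra $H\,g(\tau(Z),Z)$ that doubles the last term. Care is also needed with the sign and ordering convention in the definition of $R$, and with reading the coefficient $\omega(\nuh)$ as $\theta(\nuh)$ rather than the contact form evaluated on the horizontal vector $\nuh$, which would vanish.
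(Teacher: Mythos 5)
Your proof is correct, and it follows a genuinely different route from the paper's. The paper first converts the pseudo-hermitian curvature into a Levi-Civita one, $g(R(T,Z)\nu_h,Z)=g(R^{LC}(\nu_h,Z)T,Z)$ via \eqref{phvsLC}, and then computes the latter directly from $D_VT=\sigma(V)=\tau(V)+\tfrac{c_1}{2}J(V)$ (equation \eqref{def:sigma}); since $\sigma$ is a smooth ambient tensor, that computation only ever differentiates $\nu_h$ once, so it is valid verbatim for $\mathcal{C}^2$ surfaces. You instead stay entirely within the pseudo-hermitian connection, expand $R(T,Z)\nu_h$ in the adapted frame through the connection form $\theta$, and eliminate the non-tensorial combination $Z(\theta(T))-T(\theta(Z))$ by the Jacobi identity applied to the brackets \eqref{liebrachet}; I have checked that the $Z$-component of that identity gives exactly the relation you state (using $g(\tau(\nu_h),\nu_h)=-g(\tau(Z),Z)$, which does hold here since $c_1$ is constant), and the substitution then reproduces \eqref{curvaturaT}. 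What your route buys is transparency: it isolates precisely where each term comes from, it makes explicit that the coefficient written $\omega(\nu_h)$ in the statement must be read as the connection form $\theta(\nu_h)=g(\nabla_{\nu_h}\nu_h,Z)$ (your reading is confirmed by the paper's own later use of the formula in \eqref{dertornu} and Lemma \ref{R-e1tor}), and it keeps the whole argument inside the pseudo-hermitian calculus. The one point you should add a remark on is regularity: your intermediate quantities $Z(\theta(T))$ and $T(\theta(Z))$ involve second derivatives of (an extension of) $\nu_h$, which need not exist individually for a merely $\mathcal{C}^2$ surface; since both sides of \eqref{curvaturaT} depend only on the $1$-jet of $\nu_h$, this is repaired by proving the identity for smooth surfaces and passing to the limit, but the paper's Levi-Civita route avoids the issue altogether.
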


\begin{proof}  By \eqref{phvsLC} it is not difficult show (see \cite[Theorem~1.6]{Dr-To} for the case in which $c_1=2$)
\begin{equation}\label{equivRTRLCT}
g(R(T,Z)\nu_h,Z)=g(R^{LC}(\nu_h,Z)T,Z),
\end{equation}
where $R^{LC}$ is the curvature tensor with respect to the Levi-Civita connection, that can be easily computed as 
\[
g(R^{LC}(\nu_h,Z)T,Z)=-\nu_h(g(\tau(Z),Z))+Z(g(\tau(Z),\nu_h))-2\omega(\nu_h)g(\tau(Z),\nu_h)+2Hg(\tau(Z),Z),
\]
take in account \eqref{phvsLC} and \eqref{def:sigma}.
\end{proof}

\begin{theorem}\label{seconvariation}
Let $\Sigma$ be a $\mathcal{C}^2$ minimal surface in a \phMM, with singular set $\Sigma_{0}$. We consider a $\mathcal{C}^1$ vector field $U=vN+wT$, where $N$ is the unit normal vector to $\Sigma$
 and $w,v\in \mathcal{C}^1_{0}(\Sigma-\s_{0})$. Then the second derivative of the area for the variation induced by  $U$ is given by
 \begin{equation}
\label{ secondaregular}
A''(0)=\m\limits_{\Sigma} \{|N_{h}|^{-1} Z(u)^2 +u^2 q \}  d\Sigma+\m\limits_{\Sigma} div_{\Sigma}(\xi Z+\zeta Z+\eta S)d\Sigma,
\end{equation}
with
\begin{equation*}
\xi=g(N,T)\{|N_{h}|\theta(S)+c_1g(N,T)^2+(1+g(N,T)^2)g(\tau(Z),\nu_h)\}u^2, 
\end{equation*}
\begin{equation*}
\zeta=|N_h|^2|\{g(N,T)(|N_{h}|\theta(S)+c_1g(N,T)^2+(1+g(N,T)^2)g(\tau(Z),\nu_h))w^2  -2g(B(Z),S)vw  \}, 
\end{equation*}
\begin{equation*}
\eta=(|N_h|^2v^2-(g(N,T)v+w)^2)g(\tau(Z),Z)
\end{equation*}
and 
\begin{equation*}
\begin{aligned}
q&=|N_{h}|\{ -W+c_1^2+c_1g(\tau(Z),\nu_h)   \}-|N_{h}| (|N_{h}|(c_1+g(\tau(Z),\nu_h))-\theta(S))^2\\
&+g(N,T)g(R(Z,T)\nu_{h},Z)-g(N,T)Z(g(\tau(Z),\nu_h)),
\end{aligned}
\end{equation*}
where $u=g(U,N)$, $R$ is the pseudo-hermitian curvature tensor and $B$ is the Riemannian shape operator.

\end{theorem}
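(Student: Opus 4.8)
The plan is to compute the second derivative $A''(0)$ directly from the area functional by differentiating twice the integrand appearing in the first variation, reducing everything to quantities along characteristic curves where the Jacobi-like machinery of Proposition \ref{jacobifields} applies. First I would set $u = g(U,N) = v|N_h| + w\,g(N,T)$ (using $g(N,T)$ and $|N_h|$ as in \eqref{def:S}) and express the area of $\varphi_s(\Sigma)$ via the horizontal Jacobian of Lemma \ref{jacobianoorizzontale}, choosing flow-invariant extensions $E_1(s), E_2(s)$ of the orthonormal frame $\{Z,S\}$ so that $[E_i,U]=0$ exactly as in the proof of the first variation formula \eqref{var}. Then $A(\varphi_s(\Sigma)) = \int_\Sigma |V(s)|\,d\Sigma$ with $V(s) = g(T,E_1(s))E_2(s) - g(T,E_2(s))E_1(s)$, and I would write
\[
A''(0) = \m_\Sigma \left\{ \frac{g(\n_U\n_U V, V(0)) + |\n_U V|^2}{|V(0)|} - \frac{g(\n_U V, V(0))^2}{|V(0)|^3} \right\} d\Sigma,
\]
evaluating each piece at $s=0$ where $V(0) = |N_h|Z$.

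The core computation is to expand $\n_U V$ and $\n_U\n_U V$ using $[E_i,U]=0$, equation \eqref{nablaT}, the torsion definition \eqref{def:torsion}, and the Lie bracket relations \eqref{liebrachet}. The key simplification is that since $U = vN + wT$ and $\Sigma$ is minimal ($H=0$, so $g(\n_Z\nu_h,Z)=0$), many first-order terms either vanish or combine into divergence terms. I would use Lemma \ref{conti}, parts (iii), (iv), (v), to rewrite $g(B(Z),S)$ and the combination $|N_h|^{-1}Z(g(N,T))$ in terms of $c_1$, $\theta(S)$, $g(\tau(Z),\nu_h)$, and $g(N,T)$; these are precisely the geometric terms appearing in $\xi$, $\zeta$, and $q$. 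The second-order normal derivatives force the appearance of the curvature tensor $R$: the term $g(R(Z,T)\nu_h,Z)$ in $q$ arises from commuting covariant derivatives, and here I would invoke \eqref{curvaturaT} together with \eqref{equivRTRLCT} to convert between pseudo-hermitian and Levi-Civita curvature and to absorb the $Z(g(\tau(Z),\nu_h))$ contribution. The leading term $|N_h|^{-1}Z(u)^2$ comes from differentiating $u$ along the characteristic direction, which is the natural second-order quantity once the frame is transported by the flow.

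The main obstacle will be organizing the genuinely second-order terms so that everything not of the form $|N_h|^{-1}Z(u)^2 + u^2 q$ collects cleanly into the single divergence $\divv_\Sigma(\xi Z + \zeta Z + \eta S)$. The non-vanishing pseudo-hermitian torsion $\tau$ is what makes this delicate: unlike the Sasakian case, the cross terms $g(\tau(Z),\nu_h)$ and $g(\tau(Z),Z)$ do not drop out, and tracking them through the expansion of $\n_U\n_U V$ is where sign errors and missing contributions are most likely. I expect the correct strategy is to compute the raw second variation first and only afterwards isolate the total-divergence part by reverse-engineering $\xi$, $\zeta$, $\eta$ from Lemma \ref{devergenceteorem} (which gives $\divv_\Sigma$ of $fZ$ and $fS$ in terms of $\theta$, $g(\tau(Z),Z)$, and $g(\tau(\nu_h),Z)$). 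The consistency check that $\divv_\Sigma(fZ) - \divv_\Sigma(fS)$-type relations close up, together with the symmetry $g(B(Z),S)=g(B(S),Z)$ (used to derive Lemma \ref{conti}(v)), is what should ultimately pin down the stated coefficients. Finally, since $v,w \in \mathcal{C}^1_0(\Sigma-\Sigma_0)$ have compact support in the regular set, the divergence term integrates to zero by the Riemannian divergence theorem, but I would retain it explicitly as in \eqref{ secondaregular} because it is needed for the stability operator constructed in the following section.
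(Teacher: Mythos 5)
Your plan follows essentially the same route as the paper's own proof: the same horizontal-Jacobian setup with flow-invariant frames $E_1,E_2$, the identical formula for $U(U(|V|))$ split into a squared first-order part and a $g(\n_U\n_U V,V)$ part (which the paper further decomposes into curvature, bracket and torsion pieces via $[E_i,U]=0$), and the same closing ingredients --- Lemma \ref{conti}, Lemma \ref{devergenceteorem}, \eqref{curvaturaT}/\eqref{equivRTRLCT}, and the relation for $Z(\theta(S))$ --- to reorganize everything into $|N_h|^{-1}Z(u)^2+qu^2$ plus a tangential divergence. One small slip to fix before carrying out the computation: since $U=vN+wT$ with $N$ the unit Riemannian normal, $u=g(U,N)=v+w\,g(N,T)$, not $v|N_h|+w\,g(N,T)$ (the latter would correspond to $U=v\nu_h+wT$).
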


\begin{remark} If $\s$ is area stationary without boundary, then 
\begin{equation*}
\m_{\s}div_{\s}(fS)=0,
\end{equation*}
for every $f\in\mathcal{C}^1_{0}(\s)$, by  Corollary \ref{charmeetsingular}.
\end{remark}

\begin{proof} (of Theorem \ref{seconvariation}) We can reason as in the proof of the first variation formula. We have 
\begin{equation*}
U(|V|)=\frac{1}{|V|} g(\n_U V,V)
\end{equation*}
and\begin{equation*}
U(U(|V|))=-\frac{1}{|V|^3}g(\n_{U}V,V)^2 +\frac{1}{|V|}\left(g(\n_{U}\n_{U}V,V)+|\n_{U}V|^2\right).
\end{equation*}
We fix 
\begin{equation*}
\lambda:= g(\n_{U}V,V),
\end{equation*}
so we get
\begin{equation}\label{2derivataastratta}
U(U(|V|))=\frac{1}{|V|}\left\{\underbrace{ g(\n_{U}\n_{U}V,V)}_{II}+\underbrace{g(\n_{U}V,\n_{U}V -\frac{\lambda}{|V|^2}V)}_{I}\right\}.
\end{equation}
As $V=g(E_{1},T)E_{2}-g(E_{2},T)E_{1}$ we compute
\begin{equation}
\begin{aligned}
\n_{U}V(0)&=U(g(E_{1},T))S-U(g(T,E_{2}))Z+|N_{h}|\n_{U}E_{1}.
\end{aligned}
\end{equation}
Observing $g(E_1,T)=0$ and
\begin{equation*}
-\frac{\lambda V(0)}{|V(0)|^2}=-g(\n_{U}V,Z)Z,
\end{equation*}
we have
\begin{equation}
\n_{U}V-\frac{\lambda V(0)}{|V(0)|^2}=U(g(E_{1},T))S+|N_{h}|g(\n_{U}E_{1},S)S+|N_{h}|g(\n_{U}E_{1},N)N.
\end{equation}
By $|N_{h}|S=g(N,T)N-T$ and $|N_{h}|N+g(N,T)S=\nu_{h}$ we obtain
\begin{equation*}
\begin{aligned}
I&=g(\n_{U}E_{1},N)^2.
\end{aligned}
\end{equation*}
Using (i) in lemma \ref{conti}, $T=g(N,T)N-|N_h|S$ and $\nu_h=g(N,T)S+|N_h|N$ we have
\begin{equation}
\begin{aligned}
I&=\{Z(g(U,N)+g(N,T)|N_h|g(U,N)(c_1+g(\tau(Z),\nu_h))-g(U,S)|N_h|\theta(S)\}^2
\end{aligned}
\end{equation}
and
\begin{equation}\tag{I}
\begin{aligned}
|N_h|^{-1}I&=|N_h|^{-1}Z(u)^2+2g(N,T)uZ(u)(c_1+g(\tau(Z),\nu_h))\\
&+g(N,T)^2|N_h|u^2(c_1+g(\tau(Z),\nu_h))^2+2Z(u)|N_h|w\theta(S)\\
&+2g(N,T)|N_h|^2uw\theta(S)(c_1+g(\tau(Z),\nu_h))+w^2|N_h|^3\theta(S)^2,
\end{aligned}
\end{equation}
where $u=g(U,N)$.

Now we consider
\begin{equation*}
\begin{aligned}
|N_h|^{-1}II&=\underbrace{g(R(V,U)U,Z)}_{A}+\underbrace{g(\n_{V}\n_{U}U,Z)}_{B}+\underbrace{g(\n_{[U,V]}U,Z)+g(\n_{U}[U,V],Z)}_{C}\\
&+g(\underbrace{\n_{U}Tor_{\n}(U,V),Z)}_{D},
\end{aligned}
\end{equation*} 
as
\begin{equation*}
\begin{aligned}
g(\n_{U}\n_{V}U,V)&=g(R(V,U)U,V)+g(\n_{V}\n_{U}U,V)+g(\n_{[U,V]}U,V).
\end{aligned}
\end{equation*} 
By equation (\ref{phvsLC}) we obtain
\begin{equation}\label{derUU}
\begin{aligned}
\n_U U=-g(U,\nu_h)^2g(\tau(Z),Z) T-g(U,T)g(U,\nu_h)(c_1Z+\tau(\nu_h)) ,
\end{aligned}
\end{equation} 
furthermore by  (i) in Lemma \ref{conti} and $D_{U}U=0$ we have
\begin{equation*}
\begin{aligned}
B=&- Z(v(g(N,T)v+w)|N_{h}|^2(c_1g(\tau(Z),\nu_h)))\\
&- g(N,T)Z(g(N,T))v(g(N,T)v+w)(c_1g(\tau(Z),\nu_h))
\end{aligned}
\end{equation*} 
and for the linearity of $R$ we get
\begin{equation*}
\begin{aligned}
A&=|N_{h}|^2v (R(Z,U)\nu_{h},Z).
\end{aligned}
\end{equation*} 
On the other hand as $[U,V]=U(g(E_1,T))E_2-U(g(E_2,T))E_1$ and $\s$ stationary we have
\begin{equation*}
\begin{aligned}
C&= (Z(g(U,T))+c_1g(U,\nu_h))(2g(U,\nu_h) \theta(S)-(g(N,T)g(U,T)-|N_h|g(U,\nu_h))g(\tau(Z),\nu_h)) \\
&   -S(g(U,T))g(U,T)g(\tau(Z),Z)     -U(U(g(T,E_{2})))                  
\end{aligned}
\end{equation*} 
and writing $-U(U(g(T,E_{2}))$ as
\[
-S(g(\n_U U,T))+c_1g(N,T)g(\n_U U,Z)-c_1g(\n_{S}U,J(U))-c_1g(Tor_{\n}(U,S),J(U))
\]
that is
\begin{equation*}
\begin{aligned}
&-S(g(U,\nu_h)^2g(\tau(Z),Z)) -c_1g(N,T)g(U,T)g(U,\nu_h)(2g(\tau(Z),\nu_h))\\
&+c_1g(U,\nu_h)(g(N,T)g(U,T)-|N_h|g(U,\nu_h))g(\tau(Z),\nu_h)-c_1g(U,\nu_h)^2\theta(S),
\end{aligned}
\end{equation*} 
we get that $C$ equals
\begin{equation*}
\begin{aligned}
&Z(g(N,T)v+w)ug(\tau(Z),\nu_h)+c_1|N_h|^2v^2\theta(S)-c_1g(N,T)|N_h|(g(N,T)v+w)v(2g(\tau(Z),\nu_h))\\
&+2 Z(g(N,T)v+w)|N_h|v \theta(S)+ S(g(U,\nu_h)^2g(\tau(Z),Z))    -S(g(U,T))g(U,T)g(\tau(Z),Z) .
\end{aligned}
\end{equation*}

Now $V\in\mathcal{H}$ implies $D=-|N_h|g(U,\nu_h)^2(g(\tau(Z),Z))^2+g(U,T)g(\n_U \tau(V),Z )$. On the other hand
\begin{equation*}
\begin{aligned}
g(\n_{U}\tau(V),Z)=g(\n_{U}\tau(V)-\n_{V}U,Z)+g(\n_{V}\tau(U),Z),
\end{aligned}
\end{equation*} 
where $g(\n_{V}\tau(U),Z)=Z(|N_{h}|g(U,\nu_h)g(\tau(Z),\nu_h))-Z(|N_h|)g(U,\nu_h)g(\tau(Z),\nu_h)$ and
\begin{equation*}
\begin{aligned}
g(\n_{U}\tau(V)-\n_{V}\tau(U),Z)=g((\n_{U}\tau)V-(\n_{V}\tau)U,Z)+g(\tau(Z),[U,V]+Tor(U,V)).
\end{aligned}
\end{equation*} 
We have that $g((\n_{U}\tau)V-(\n_{V}\tau)U,Z)$ is equal to
\begin{equation*}
\begin{aligned}
&|N_{h}|g(U,\nu_h)g((\n_{\nu_{h}}\tau)Z-(\n_{Z}\tau)\nu_{h},Z)+g(U,T)|N_{h}|g((\n_{T}\tau)Z,Z)
\end{aligned}
\end{equation*} 
and by Theorem 1.6 in \cite{Dr-To} and the fact that $(\n_{X}\tau)Y$ is a tensor we obtain
\begin{equation*}
\begin{aligned}
g((\n_{U}\tau)V-(\n_{V}\tau)U,Z)&=-|N_{h}|g(U,\nu_h)g(R(T,Z)\nu_{h},Z)+|N_{h}|g(U,T)(T(g(\tau(Z),Z))\\
&+2g(\tau(Z),\nu_h) \omega(T)).
\end{aligned}
\end{equation*} 
Finally since $g(Tor(U,V),\tau(Z))=g(U,T)|N_{h}|(g(\tau(Z),\nu_h)^2+(g(\tau(Z),Z))^2)$ and writing $g(\tau(Z),[U,V])$ as
\begin{equation*}
\begin{aligned}
&-S(g(U,T))g(U,T)g(\tau(Z),Z)+g(N,T)g(\tau(Z),\nu_h) (Z(g(U,T))+c_1g(U,\nu_h))
\end{aligned}
\end{equation*} 
together with (i) in Lemma \ref{conti} we obtain that $D$ equals 
\begin{equation*}
\begin{aligned}
&-|N_h|g(U,\nu_h)^2(g(\tau(Z),Z))^2+|N_h|(g(N,T)v+w)^2(T(g(\tau(Z),Z))+g(\tau(Z),\nu_h)^2\\
&+(g(\tau(Z),Z))^2)-|N_h|^2(g(N,T)v+w)vg(R(T,Z)\nu_h,Z) +2\omega(T)g(\tau(Z),\nu_h) \\
&+|N_h|(g(N,T)v+w)Z(|N_h|vg(\tau(Z),\nu_h)) +c_1g(N,T)|N_h|v(g(N,T)v+w)g(\tau(Z),\nu_h) \\
&-S(g(U,T))g(U,T)g(\tau(Z),Z)+g(N,T)Z(g(N,T)v+w)(g(N,T)v+w)g(\tau(Z),\nu_h).
\end{aligned}
\end{equation*} 

The sum of all terms that contain $g(\tau(Z),Z)$, after have used lemma \ref{devergenceteorem} is 
\begin{equation*}
div_{\s}((|N_h|^2v^2-(g(N,T)v+w)^2)g(\tau(Z),Z) S)+g(N,T)(g(N,T)v+w)^2\nu_h(g(\tau(Z),Z)).
\end{equation*}

By the definition of $\theta$ we have
\begin{equation}\label{deromega}
\begin{aligned}
Z(\theta(S))&=g(R(S,Z)\nu_{h},Z)+\theta([Z,S])\\
&=g(R(S,Z)\nu_{h},Z)+g(N,T)\theta(S)^2-g(N,T)|N_{h}|\theta(S)(c_1+g(\tau(Z),\nu_h)),
\end{aligned}
\end{equation}
where we used that $[Z,S]$ is tangent to $\s$. We note that since $\n_{Z} \nu_h\equiv 0$ \eqref{deromega} make sense when $\s$ is of class $\mc{C}^2$. Furthermore by Lemma \ref{devergenceteorem} and equation (\ref{deromega}) we have that $2Z(u)|N_h|w\theta(S)+2Z(g(N,T)v+w)|N_h|v\theta(S)$ is equal to
\begin{equation*}
\begin{aligned}
&\textcolor{black}{div_{\s}(g(N,T)|N_h|(v^2+w^2)\theta(S)Z)}+\frac{Z(g(N,T))}{|N_h|}\theta(S)(v^2+w^2)\\
&\textcolor{black}{+2div_{\s}(|N_h|vw\theta(S)Z)}-2vwZ(|N_h|)\theta(S)-2vw|N_h|g(R(S,Z)\nu_h,Z)\\
&-g(N,T)|N_h|(v^2+w^2)g(R(S,Z)\nu_h,Z)
\end{aligned}
\end{equation*}
and similary $B$ equals
\begin{equation*}
\begin{aligned}
&c_1g(N,T)|N_h|v(g(N,T)v+w)(c_1g(\tau(Z),\nu_h))\textcolor{black}{-div_{\s}(|N_h|^2v(g(N,T)v+w)(c_1g(\tau(Z),\nu_h))Z)}.
\end{aligned}
\end{equation*}
In the same way $g(N,T)Z(u^2)(g(\tau(Z),\nu_h))+g(N,T)^2|N_h|u^2(c_1+g(\tau(Z),\nu_h))^2$ can be expressed 
\begin{equation*}
\begin{aligned}
&\textcolor{black}{div_{\s}(g(N,T)u^2(g(\tau(Z),\nu_h))Z)-g(N,T)u^2Z(g(\tau(Z),\nu_h))}\textcolor{black}{+u^2|N_h|c_1(c_1+g(\tau(Z),\nu_h))}\\
&+u^2(g(\tau(Z),\nu_h))\theta(S)\textcolor{black}{-u^2|N_h|^3(g(\tau(Z),\nu_h))^2}.
\end{aligned}
\end{equation*}
Furthermore $-Z(g(N,T)v+w)(u+g(N,T)^2v-g(N,T)w) +|N_h|(g(N,T)v+w)Z(|N_h|vg(\tau(Z),\nu_h))$ become
\begin{equation*}
\begin{aligned}
&\textcolor{black}{div_{\s}(g(\tau(Z),\nu_h)(g(N,T)v+w)(v+g(N,T)w)Z)}\\
&+g(N,T)(-\theta(S)-|N_h|(2g(\tau(Z),\nu_h)))g(\tau(Z),\nu_h)(g(N,T)v+w)(v+g(N,T)w)\\
&-g(N,T)(g(N,T)v+w)^2Z(g(\tau(Z),\nu_h))-Z(g(N,T))g(\tau(Z),\nu_h) w(g(N,T)v+w)
\end{aligned}
\end{equation*}
and all the other terms not considered are
\begin{equation*}
\begin{aligned}
\textcolor{black}{|N_h|^{-1}Z(u)^2}+|N_h|^3v^2g(R(Z,\nu_h)\nu_h,Z)-2|N_h|^2v(g(N,T)v+w)g(R(T,Z)\nu_h,Z)
\end{aligned}
\end{equation*}
and
\begin{equation*}
\begin{aligned}
&2g(N,T)|N_h|^2uw\theta(S)(2g(\tau(Z),\nu_h))+|N_h|^3\theta(S)^2w^2+2|N_h|^2v^2\theta(S) \\
&-4g(N,T)|N_h|v(g(N,T)v+w)-|N_h|(g(N,T)v+w)^2(g(\tau(Z),\nu_h)^2-2\omega(T)g(\tau(Z),\nu_h)).
\end{aligned}
\end{equation*}
Since $g(R(Z,\nu_h)\nu_h,Z)=-W$ we have that the terms in which appear the curvature tensor are equal to
\begin{equation}\label{O}
\begin{aligned}
-|N_h|Wu^2+g(N,T)|N_h|^2(w^2-v^2)g(R(T,Z)\nu_h,Z)
\end{aligned}
\end{equation}
and by equation (\ref{curvaturaT}) we have \eqref{O} sums with $g(N,T)(g(N,T)v+w)^2(\nu_h(g(\tau(Z),Z))+2\omega(\nu_h)g(\tau(Z),\nu_h)-Z(g(\tau(Z),\nu_h)))$ is
\begin{equation*}
\begin{aligned}
-|N_h|Wu^2-g(N,T)u^2g(R(T,Z)\nu_h,Z).
\end{aligned}
\end{equation*}
Finally a long but standard computation shows that the remaining terms add up to
 \begin{equation*}
\begin{aligned}
&c_1u^2|N_{h}|(c_1+g(\tau(Z),\nu_h))-|N_{h}| (|N_{h}|(c_1+g(\tau(Z),\nu_h))-\theta(S))^2u^2-g(N,T)u^2Z(g(\tau(Z),\nu_h))\\
&+g(N,T)(g(N,T)v+w)^2(\nu_h(g(\tau(Z),Z))+2\omega(\nu_h)g(\tau(Z),\nu_h)-Z(g(\tau(Z),\nu_h))).
\end{aligned}
\end{equation*}
Since that $u^2=v^2+2g(N,T)vw+g(N,T)^2w^2$, we get the statement.

\end{proof}

\begin{remark} If we suppose that our variation is not by Riemannian geodesics, i.e. we remove the hypothesis $D_{U}U=0$, we get the additional term
\begin{equation}\label{var:nongeodesic}
-\m_{\s} div_{\s}(|N_{h}|g(D_U U,Z)Z)d\s+\m_{\s}div_{\s}(g(D_{U}U,T)S)d\s.
\end{equation}
It is worth mention to remark that \eqref{var:nongeodesic} vanishes when the variation functions $w,v$ have support in the regular set.
 \end{remark}

 \begin{proof} From the term $-U(U(g(E_{2},T)))$ we have $S(g(D_{U}U,T))-c_1g(N,T)g(D_U U,Z)$. Furthermore 
 $$|N_{h}|g(\n_{Z}D_{U}U,Z)=-Z(|N_h|g(D_U U,Z))+g(D_U U,Z)Z(|N_h|)$$
 and $|N_{h}|^{-1}g(D_{U}U,T)g(\tau(V),V)=g(D_{U}U,T)|N_{h}|g(\tau(Z),Z)$. By lemma \ref{conti} we have
 \begin{equation*}
Z(|N_h|)=g(N,T)(|N_h|\theta(S)+c_1-|N_h|^2(2g(\tau(Z),\nu_h)))
\end{equation*}
and, as no others terms are involved, we conclude applying Lemma \ref{devergenceteorem}.
 \end{proof}

\subsection{Second variation moving the singular set}

By Theorem \ref{singularsetth} the singular set of a $\mc{C}^2$ surface is composed of singular curves and isolated singular points without accumulation points. 
First we present a second variation formula induced by a vertical variation near a singular curve, i.e. in a tubular neighborhood of radius $\varepsilon > 0$ of the singular curve that is the union of all the characteristic curves centered at $(\s_0)_c$ defined in the interval $[-\eps,\eps]$.

\begin{lemma}\label{secondvar:scurve} Let $\s$ be a complete  $\mathcal{C}^2$ area-stationary surface immersed in $M$, with a singular curve $\Gamma$ of class $\mathcal{C}^{3}$. Let $w\in\mathcal{C}^{2}_{0}(\s)$. We consider the variation of $\s$ given by $p \rightarrow \exp_{p}(rw(p)T_{p})$. Let $U$ be a tubular neighborhood of $supp(w)\cap\Gamma$, and assume that $w$ is constant along the characteristic curves in $U$. Then there is a tubular neighborhood $U' \subset U$ of $supp(w) \cap \Gamma$ so that
 \begin{equation*}
\begin{aligned}
A''(\varphi_{r}(U'))=\frac{d^2}{dr^2}A(\varphi_r(U'))&=\m_{\s}\{2 w^{2}|N_{h}|(g(\tau(Z),\nu_h)^{2}+g(\tau(Z),Z)^{2}   ) \}d\s\\
&+\m_{\s}div_{\s}(w^{2}g(\tau(Z),Z) S)d\s+\m_{\Gamma}S(w)^{2}d\Gamma.
\end{aligned}
\end{equation*}

\end{lemma}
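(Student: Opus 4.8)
The plan is to obtain the formula on the regular part by the same scheme as in Theorem \ref{seconvariation} and to capture the effect of the \emph{moving} singular curve as the boundary integral over $\Ga$. Since the integral curves of $T$ are geodesics, the variational field is $U=wT$; thus in the notation of Theorem \ref{seconvariation} one has $v=0$, $u=g(U,N)=w\,g(N,T)$, and $D_UU=0$, so no non-geodesic correction \eqref{var:nongeodesic} appears. The hypothesis that $w$ be constant along the characteristic curves gives $Z(w)=0$, hence $Z(u)=w\,Z(g(N,T))$. Because the density $U(U(|V|))$ is only controlled on $\s-\s_0$, I would work on $U'_\delta:=U'\cap\{\mnh\geq\delta\}$, apply the computation of Theorem \ref{seconvariation} there, and pass to the limit $\delta\to0$.

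First I would set $v=0$ in $\xi,\zeta,\eta,q$ and simplify using $g(N,T)^2+\mnh^2=1$, the identities of Lemma \ref{conti}, and $Z(w)=0$. One finds $\eta=-w^2g(\tau(Z),Z)$, producing an $S$-divergence of exactly the type $\divv_\s(w^2g(\tau(Z),Z)S)$ that appears in the statement, while the genuinely interior part of $\mnh^{-1}Z(u)^2+u^2q$ should collapse to the density $2w^2\mnh(g(\tau(Z),\nu_h)^2+g(\tau(Z),Z)^2)$. A point to verify here is integrability up to $\Ga$: since $\s$ is $\mc{C}^2$ and $\Ga$ is $\mc{C}^3$, the form $\theta(S)=g(\n_S\nu_h,Z)$ stays bounded near $\Ga$, and then Lemma \ref{conti}(v) forces $\mnh\,\theta(S)\to0$, so the a priori singular contributions of $q$ (those carrying $\mnh\,\theta(S)^2$) are harmless.

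The decisive step is the behaviour at $\Ga$. Applying the Riemannian divergence theorem on $U'_\delta$, the outer boundary contributes nothing because $w$ has compact support, and on the two inner curves $\{\mnh=\delta\}$ approaching $\Ga$ from $B_+$ and $B_-$ the characteristic curves meet $\Ga$ orthogonally by Corollary \ref{charmeetsingular}, so $Z$ is the conormal and $S$ is tangent. I would evaluate these limits using the local description of Theorem \ref{singularsetth}(ii): continuity of $N$ (hence of $g(N,T)$) across $\Ga$ together with $\nu_h^+=-\nu_h^-$, $Z^+=-Z^-$, $S^+=-S^-$. The remaining source of a term localised on $\Ga$ is that the vertical flow moves the singular set: the singular curve $\Ga_r$ of $\varphi_r(\s)$ sweeps across $U'$, the area density $|V(r)|$ vanishes on $\Ga_r$ and is merely Lipschitz in $r$ there, so $\tfrac{d^2}{dr^2}|V(r)|$ carries a mass concentrated on $\Ga$. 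Matching the $\delta\to0$ limit of the $Z$-divergence boundary integrals against this concentration is what should produce exactly $\int_\Ga S(w)^2\,d\Ga$; note that $Z(w)=0$ makes $S(w)$ the tangential derivative of $w$ along $\Ga$, and that $\int_\Ga S(w)^2\,d\Ga$ is non-negative, consistent with a sweeping (kinetic) contribution quadratic in the speed at which $\Ga$ is displaced.

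The main obstacle is precisely this combined limiting analysis near the moving singular curve: one must show that the potentially divergent pieces arising from $\theta(S)$ and from $\mnh^{-1}Z(g(N,T))$ as one approaches $\Ga$ cancel, and that the boundary integrals from the divergence theorem together with the concentration of $\tfrac{d^2}{dr^2}|V(r)|$ on $\Ga_r$ assemble into the single clean term $\int_\Ga S(w)^2\,d\Ga$ with the correct constant. The $\mc{C}^3$ regularity of $\Ga$ and the $\mc{C}^2$ regularity of $\s$ are what guarantee that all the relevant one-sided limits along the characteristic curves exist, so that the extraction of the boundary term is legitimate; restricting to a possibly smaller tube $U'\subset U$ as in the statement ensures the parametrisation by $(\rho,s)$, arclength along $\Ga$ and signed arclength along the characteristic curves with $\mnh$ vanishing linearly in $s$, is a diffeomorphism where these estimates are carried out.
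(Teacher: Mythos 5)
Your physical picture is right---the term $\int_\Ga S(w)^2\,d\Ga$ must come from the kink of the area density at the moving singular curve---but the decisive step is missing, and the framework you set up does not actually produce it. You propose to apply Theorem \ref{seconvariation} on $U'_\delta=U'\cap\{\mnh\geq\delta\}$ and let $\delta\to 0$; however $A''(\varphi_r(U'))$ is the second $r$-derivative of $\int_{U'}|V(r)|\,d\s$, and for points $p$ near $\Ga$ the function $r\mapsto|V(r)|(p)$ is only Lipschitz at the parameter where $\varphi_r(p)$ becomes singular, so one cannot differentiate twice under the integral sign on $U'_\delta$ and then pass to the limit: the interchange of the two limits is precisely where the boundary term is created, and your scheme leaves this as the assertion that the concentration ``should produce exactly'' $\int_\Ga S(w)^2\,d\Ga$. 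That assertion is the content of the lemma. The paper extracts it concretely: it parametrizes $U'$ by characteristic coordinates $(\eps,s)$, writes $|V(r)|=|F(p,s,r)|\,|E_1|$ with $F=g(E_2,T)$, uses the Jacobi-like equation to see that $F$ is affine in $r$, applies the implicit function theorem (legitimate because $\ptl F/\ptl s=-Z(\mnh)=-c_1\neq0$ on $\Ga$) to write the zero set of $F$ as a graph $s(\eps,r)$ over $\Ga$, splits the $s$-integral there so the absolute value can be removed, and then the second derivative of the moving limits of integration yields the term $\dot w(\eps)^2$ exactly. The interior part is computed directly as $2S(w)U(|E_1|)+\mnh\,U(U(|E_1|))$ for the vertical field $U=wT$ with $Z(w)=0$, which is far shorter than specializing Theorem \ref{seconvariation}.

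Two further points in your regular-part analysis need repair. First, with $v=0$ one gets $\eta=-w^2g(\tau(Z),Z)$, whose divergence enters with the \emph{opposite} sign to the one in the statement, so the claimed collapse of $\mnh^{-1}Z(u)^2+u^2q$ plus the divergence terms onto $2w^2\mnh\big(g(\tau(Z),\nu_h)^2+g(\tau(Z),Z)^2\big)$ is not a formality and must be checked term by term. Second, your integrability argument near $\Ga$ is too quick: $\mc{C}^2$ regularity of $\s$ does not make $\theta(S)=g(\n_S\nu_h,Z)$ bounded near $\Ga$ (recall $\nu_h^+=-\nu_h^-$ across $\Ga$); what Lemma \ref{conti}(v) gives is only $\mnh\,\theta(S)\to0$, and the contributions to $q$ of the form $\mnh\,\theta(S)^2=(\mnh\theta(S))^2/\mnh$ are then $o(1)/\mnh$, which is not obviously integrable in $s$ since $\mnh$ vanishes linearly along characteristic curves. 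This is another reason the paper avoids Theorem \ref{seconvariation} here and works with the much simpler density $U(U(|E_1|))=Z(w)^2+w^2\{2g(\tau(Z),\nu_h)^2+g(\tau(Z),Z)^2\}$, in which no such singular terms appear.
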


\begin{proof} We consider the singular curve $\Gamma$ parametrized by arc-length with variable $\varepsilon$. By Theorem \ref{charmeetsingular} we can parametrize $\s$ in a neighborhood of $supp(w)\cap\Gamma$ by $(s,\varepsilon)$, so that the curves with $\varepsilon$ constant are the characteristic curves of $\s$.
 As $E_{i}$ are Jacobi-like vector fields it is easy to prove that $g(E_{i},T)''=0$, so that $g(E_{i},T)=g(E_{i},T)'(0)r+g(E_{i},T)(0)$ and, in particular, we have $g(E_{1},T)=0$. This means that $|V(r)|=|g(E_{2},T)||E_{1}|=|F(p,s,r)||E_{1}|$ which vanishes if and only if $F(p,s,r)=0$. As 
\begin{equation*}
\frac{\partial F(p,0,0)}{\partial s}=-Z(|N_{h}|)=\frac{g(N,T)}{N_{h}}Z(g(N,T))=-c_1,
\end{equation*}
we can apply the implicit function theorem i.e., there exists $s(\varepsilon,r)$ such that the curve $F(p,s(\varepsilon,r),r)=0$ is a graph on $\Gamma(\varepsilon)$. We have obtained
\begin{equation*}
\begin{aligned}
A(\varphi_{r}(U'))&=\m_{-s_{0}}^{s_{0}}\m_{-\varepsilon_{0}}^{\varepsilon_{0}} |F(p,s,r)||E_{1}|dsd\varepsilon\\
&=   \m_{-\varepsilon_{0}}^{\varepsilon_{0}}       \left\{  \m_{-s_{0}}^{s(\varepsilon,r)} F(p,s,r)|E_{1}|  ds  -\m_{s(\varepsilon,r)}^{s_{0}}  F(p,s,r)|E_{1}|       ds\right\}d\varepsilon=  \m_{-\varepsilon_{0}}^{\varepsilon_{0}}     f_{\varepsilon}(r)d\varepsilon
\end{aligned}
\end{equation*}
and
\begin{equation*}
\begin{aligned}
f_{\varepsilon}''(r)&=2\frac{\partial F(p,s(\varepsilon,r),r)}{\partial r}|E_{1}|\frac{\partial s(\varepsilon,r)}{\partial r}+ \m_{-s_{0}}^{s(\varepsilon,r)} (2\frac{\partial F(p,s,r)}{\partial r} \frac{\partial |E_{1}|}{\partial r}+F(p.s,r)\frac{\partial^{2} |E_{1}|}{\partial r^{2}} )ds\\
& -\m_{s(\varepsilon,r)}^{s_{0}}  \Big(2\frac{\partial F(p,s,r)}{\partial r} \frac{\partial |E_{1}|}{\partial r}+F(p,s,r)\frac{\partial^{2} |E_{1}|}{\partial r^{2}} \Big)     ds\\
&=\dot{w}(\varepsilon)^{2}+\m_{-s_{0}}^{s_{0}}2S(w)U(|E_{1}|)+|N_{h}|U(U(|E_{1}|))ds
\end{aligned}
\end{equation*}
 we have that the second variation formula becomes
 \begin{equation*}
\begin{aligned}
A''(\varphi_{r}(U'))&=\m_{(\s_{0})_{c}}S(w)^{2}+\m_{\s}\{2S(w)U(|E_{1}|)+|N_{h}|U(U(|E_{1}|))\}d\s\\
&=\m_{\s}\{ 2wS(w)g(\tau(Z),Z)+w^{2}|N_{h}|(2g(\tau(Z),\nu_h)^{2}+g(\tau(Z),Z)^{2}   )   \}d\s\\
&+\m_{(\s_{0})_{c}}S(w)^{2}\\
&=\m_{\s}\{ 2w^{2}|N_{h}|(g(\tau(Z),\nu_h)^{2}+g(\tau(Z),Z)^{2}   )  \}d\s+\m_{\s}div_{\s}(w^{2}g(\tau(Z),Z) S)d\s\\
&+\m_{(\s_{0})_{c}}S(w)^{2},
\end{aligned}
\end{equation*}
where we have used lemma \ref{devergenceteorem} and 
 \begin{equation*}
\begin{aligned}
U(U(|E_{1}|))&=g(\n_{U}\n_{U}E_{1},E_{1})+g(\n_{U}E_{1},\n_{U}E_{1}-(g(\n_{U}E_{1},E_{1})/|E_{1}|^{2})E_{1})\\
&=wg(\n_{U}\tau(E_{1}),E_{1})+g(\n_{U}E_{1},N)^{2}+g(\n_{U}E_{1},E_{2})^{2}\\
&=Z(w)^{2}+w^{2}\{ 2g(\tau(Z),\nu_h)^{2}+g(\tau(Z),Z)^{2}  \}
\end{aligned}
\end{equation*}
as 
\[
g(\n_U \tau(E_1),E_1)=w(g(\tau(Z),Z)^2+g(\tau(Z),\nu_h)^2)
\]
because of \cite[equation~1.77]{Dr-To}.

\end{proof}

\begin{remark} The hypothesis $\Ga\in\mc{C}^3$ is purely technical. We only need it when we apply the implicit function theorem and it can be weakened. On the other hand in all examples in our knowledge singular curves in area-stationary surfaces are $\mc{C}^{\infty}$.
\end{remark}

Finally we consider a variation which are constant in a neighborhood of the singular set. This hypothesis is reasonable when we move the surface close to isolated singular points, otherwise the second variation blow-up. By a tubular neighborhood of a singular point $q$ we mean the union of all the characteristic segments of length $\varepsilon$ going in $q$ or coming out from $q$.

\begin{lemma}\label{secondvar:spoint} Let $\s$ be a complete $\mathcal{C}^2$ area-stationary surface immersed in $M$ with an isolated singular point $p_0$. Let $w\in\mathcal{C}^{2}_{0}(\s)$. We consider the variation of $\s$ given by $p \rightarrow \exp_{p}(rw(p)T_{p})$. Let $U$ be a tubular neighborhood of $p_0$ and assume that $w$ is constant near $p_0$. Then there is a tubular neighborhood $U' \subset U$ of $p_0$ so that
 \begin{equation*}
\begin{aligned}
A''(\varphi_{r}(U'))&=\m_{\s}\{2 w^{2}|N_{h}|(g(\tau(Z),\nu_h)^{2}+g(\tau(Z),Z)^{2}   ) \}d\s.
\end{aligned}
\end{equation*}
\end{lemma}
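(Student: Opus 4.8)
The plan is to adapt the scheme of Lemma \ref{secondvar:scurve} to the case in which the singular set reduces to the isolated point $p_0$ rather than a singular curve. The essential change is the parametrization. By Theorem \ref{singularsetth}(i) a neighborhood of $p_0$ in $\s$ is swept out by the characteristic curves $\gamma^{\lambda}_{p_0,v}(s)$ issuing from $p_0$, so I would introduce polar-type coordinates $(s,\vartheta)$ on $U'$, where $\vartheta$ parametrizes the unit directions $v\in T_{p_0}\s$ and $s\in[0,s_0]$ is arc-length along the corresponding characteristic curve. As in the curve case I set $E_1=Z$, $E_2=S$; by the lemma asserting that the conclusions of Proposition \ref{jacobifields} hold along the characteristic curves with $U=Z$, these are Jacobi-like fields, so $g(E_1,T)\equiv0$ and $g(E_i,T)''=0$, and writing $F=g(E_2,T)$ we again have $|V(r)|=|F|\,|E_1|$, which vanishes exactly on the (displaced) singular set. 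Since $\partial F/\partial s=-c_1\neq0$ at $p_0$, the implicit function theorem locates the moved singular set as a graph $s(\vartheta,r)$, just as before.

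Next I would differentiate $A(\varphi_r(U'))=\int_{U'}|F|\,|E_1|\,ds\,d\vartheta$ twice in $r$. Exactly as in Lemma \ref{secondvar:scurve} this yields a boundary contribution coming from the moving singular locus $s(\vartheta,r)$, together with the interior integral $\int_{U'}\{2S(w)U(|E_1|)+|N_h|U(U(|E_1|))\}$. Here the hypothesis that $w$ be constant near $p_0$ is decisive: it forces $Z(w)=S(w)=0$ on $U'$, so the term $2S(w)U(|E_1|)=2wS(w)g(\tau(Z),Z)$ drops out, and the boundary contribution, which in the curve case integrated to $\int_{(\s_0)_c}S(w)^2$, now vanishes, both because it is governed by the tangential derivative of $w$ and because the singular set is a single point carrying no $1$-dimensional measure.

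It then remains to evaluate $\int_{U'}|N_h|\,U(U(|E_1|))\,d\s$. Using $\nabla_U E_1=Z(w)T+w\tau(Z)$, the identity $g(\nabla_U\tau(E_1),E_1)=w(g(\tau(Z),Z)^2+g(\tau(Z),\nu_h)^2)$ from \cite[equation~1.77]{Dr-To}, and $|N_h|^2+g(N,T)^2=1$, the same computation as in Lemma \ref{secondvar:scurve} gives $U(U(|E_1|))=w^2\{2g(\tau(Z),\nu_h)^2+g(\tau(Z),Z)^2\}$, the $Z(w)^2$ term dropping out since $Z(w)=0$. To reach the symmetric coefficient in the statement I would recast the $g(\tau(Z),Z)^2$ part in divergence form via Lemma \ref{devergenceteorem}, using that minimality gives $\theta(Z)=-H=0$ by \eqref{def:theta} and \eqref{eq:meancur}; as in the curve case the excess $w^2|N_h|g(\tau(Z),Z)^2$ is then absorbed into the surface divergence of $w^2g(\tau(Z),Z)S$, which over a tubular neighborhood of the \emph{point} produces no surviving contribution, thereby promoting the coefficient of $g(\tau(Z),Z)^2$ from $1$ to $2$ and yielding the stated formula.

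The step I expect to be most delicate is the rigorous treatment of the isolated singular point itself: making the polar parametrization and the implicit function theorem argument precise where the adapted frame $\{Z,S\}$ degenerates at $s=0$, and verifying that neither the moving-boundary contribution at $p_0$ nor the surface-divergence term survives. Controlling these requires excising a small disk $D_\delta(p_0)$ and letting $\delta\to0$, using that $|N_h|\to0$ at the singular point to annihilate the inner boundary integrals; this is exactly where the hypothesis that $w$ be constant near $p_0$ is indispensable, since it prevents any derivative of $w$ from generating an unbounded contribution as $\delta\to0$.
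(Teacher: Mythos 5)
Your proposal is correct and takes essentially the same route as the paper, which disposes of this lemma in two lines by declaring it ``an analog of the previous one'' and observing that $F(p,s,r)=0$ only at the original singular point $p_0$; your polar parametrization by the characteristic rays out of $p_0$, the disappearance of the moving-boundary term for lack of a one-dimensional singular locus, and the use of the constancy of $w$ near $p_0$ to kill the $S(w)$ and $Z(w)$ contributions are exactly the intended fill-in. The only point worth noting is that your final coefficient bookkeeping (promoting $g(\tau(Z),Z)^2$ from $1$ to $2$ by absorbing the excess into $\divv_{\Sigma}(w^2 g(\tau(Z),Z)S)$) rests on the same tacit discarding of that divergence term over the tubular neighborhood that the paper already performs in Lemma \ref{secondvar:scurve}, so you are faithful to, rather than weaker than, the original argument.
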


The proof is an analog of the previous one using variations moving singular curves. We note that in this case $F(p,s,r)=0$ if and only if $p$ is equal to the original singular point $p_0$ and the statement follows.

\begin{remark} We note that Theorem \ref{seconvariation} coincide with \cite[Theorem~3.7]{Ri-Ro-Hu} in the spacial case of the Heisenberg group $\mathbb{H}^1$ and with \cite[Theorem~5.2]{Ro} for three Sasakian sub-Riemannian manifolds. It can be easily see by (iv) in Lemma \ref{conti}. Furthermore we will see in the next section that (\ref{ secondaregular}) generalize the second variation formula in \cite{CJHMY}.
\end{remark}

\section{Two stability operators.}

The first stability operator which we present gives a criteria for instability in the regular set of a surface. It is the counterpart of the Riemannian one. 

\begin{proposition}\label{stabilityoperatorI} Let $\s$ be a $\mathcal{C}^2$ immersed surface with unit normal vector $N$ and singular set $\s_{0}$ in a pseudo-hermitian manifold $(M,g_\hhh,\omega,J)$. Consider two functions $u\in \mathcal{C}_{0} (\s -\s_{0})$ and $v\in \mathcal{C}_{0} (\s -\s_{0})$
which are $\mathcal{C}^1$ and $\mathcal{C}^2$ in the $Z$-direction respectively. If $\s$ is stable then the index form 
\begin{equation*}
\mathcal{I}(u,v):=\m_{\s}\{|N_h|^{-1}Z(u)Z(v)+quv\}d\s=-\m_{\s} u\mathcal{L}(v)d\s\geq 0
\end{equation*}
 where $\mathcal{L}$ is the following second order differential operator 
 \begin{equation}
\mathcal{L}(v):=|N_{h}|^{-1}\{ Z(Z(v))+|N_{h}|^{-1}g(N,T)(-2|N_{h}|\theta(S)-c_1+2|N_{h}|^2(c_1+g(\tau(Z),\nu_h)))Z(v)-q|N_{h}|v   \},
\end{equation}
with $q$ defined in Theorem \ref{seconvariation}.
\end{proposition}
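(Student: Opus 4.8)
The plan is to establish two separate facts packaged into the statement: the integral (bilinear) identity $\mathcal{I}(u,v)=-\int_{\s} u\,\mathcal{L}(v)\,d\s$, and the nonnegativity $\mathcal{I}(u,u)\ge 0$. The sign is the easier half and I would do it first. Apply Theorem \ref{seconvariation} to the normal variation $U=uN$, for which $g(U,N)=u$; since $u,v$ are supported in $\s-\s_0$, each coefficient $\xi,\zeta,\eta$ of the divergence integral appearing in Theorem \ref{seconvariation} is compactly supported in the regular set, so the Riemannian divergence theorem forces $\int_{\s}\divv_{\s}(\xi Z+\zeta Z+\eta S)\,d\s=0$. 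Hence $A''(0)=\int_{\s}\{|N_h|^{-1}Z(u)^2+qu^2\}\,d\s=\mathcal{I}(u,u)$, and stability ($A''(0)\ge0$) gives $\mathcal{I}(u,u)\ge0$. The identity, first obtained for $\mathcal{C}^1_0$ test functions where Theorem \ref{seconvariation} directly applies, extends to the stated weaker class (continuous, $\mathcal{C}^1$ resp. $\mathcal{C}^2$ only in the $Z$-direction) by approximation, since both sides depend solely on $Z(u)$, $Z(v)$ and $q$.

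Next I would prove the operator identity by integration by parts along the characteristic curves. The only term requiring work is $\int_{\s}|N_h|^{-1}Z(u)Z(v)\,d\s$. Introduce the tangent field $X:=u\,|N_h|^{-1}Z(v)\,Z$ and compute $\divv_{\s}X$ using Lemma \ref{devergenceteorem}. Writing that formula as $\divv_{\s}(fZ)=Z(f)+f\Psi$ with $\Psi:=-g(N,T)\theta(S)+g(N,T)|N_h|g(\tau(\nu_h),Z)+c_1g(N,T)|N_h|g(J(\nu_h),Z)$, and expanding $Z(f)$ for $f=u|N_h|^{-1}Z(v)$, one finds $\divv_{\s}X=|N_h|^{-1}Z(u)Z(v)+u[Z(|N_h|^{-1}Z(v))+|N_h|^{-1}Z(v)\Psi]$. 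As $X$ is compactly supported in $\s-\s_0$, its integral vanishes, whence $\int_{\s}|N_h|^{-1}Z(u)Z(v)\,d\s=-\int_{\s}u[Z(|N_h|^{-1}Z(v))+|N_h|^{-1}Z(v)\Psi]\,d\s$, and therefore $\mathcal{L}(v)=Z(|N_h|^{-1}Z(v))+|N_h|^{-1}Z(v)\Psi-qv$.

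Finally I would recover the stated form. Using $Z(|N_h|^{-1}Z(v))=|N_h|^{-1}Z(Z(v))-|N_h|^{-2}Z(|N_h|)Z(v)$ and factoring $|N_h|^{-1}$ gives $\mathcal{L}(v)=|N_h|^{-1}\{Z(Z(v))+[\Psi-|N_h|^{-1}Z(|N_h|)]Z(v)-q|N_h|v\}$, so the remaining task is to simplify the first-order coefficient $\Psi-|N_h|^{-1}Z(|N_h|)$. Here I would substitute the expression for $Z(|N_h|)$ obtained from Lemma \ref{conti}(i),(v), and use the symmetry of $\tau$ (so $g(\tau(\nu_h),Z)=g(\tau(Z),\nu_h)$), the relation $J(Z)=-\nu_h$ (hence $g(J(\nu_h),Z)=1$), together with $|N_h|^2+g(N,T)^2=1$. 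Reducing the raw expression to the compact coefficient $|N_h|^{-1}g(N,T)(-2|N_h|\theta(S)-c_1+2|N_h|^2(c_1+g(\tau(Z),\nu_h)))$ is the main obstacle: several $\theta(S)$-, $c_1$- and $g(\tau(Z),\nu_h)$-terms must be collected carefully, and the factors generated by $|N_h|^{-1}Z(|N_h|)$ have to be tracked exactly, which is the only delicate point of the argument. Combining the resulting expression for $\mathcal{L}$ with the inequality $\mathcal{I}(u,u)\ge0$ from the first step completes the proof.
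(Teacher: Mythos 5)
Your proposal is correct and follows essentially the same route as the paper: the nonnegativity comes from Theorem \ref{seconvariation} applied to $U=uN$ with the divergence terms killed by compact support in $\s-\s_0$, and the operator identity comes from integrating $\divv_{\s}\bigl(u\,|N_{h}|^{-1}Z(v)\,Z\bigr)=0$ and expanding via Lemma \ref{devergenceteorem} (your $\Psi$ is exactly $\divv_{\s}Z$) and Lemma \ref{conti}, which is precisely the paper's computation. The coefficient bookkeeping you flag as the delicate point does close up as you describe, and the relaxation to functions that are only $\mathcal{C}^1$/$\mathcal{C}^2$ in the $Z$-direction is handled in the paper by the same approximation remark you invoke.
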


\begin{proof} Following \cite[Proposition~3.14]{Ri-Ro-Hu} we prove that $\mathcal{L}(v)=div_{\s}(|N_{h}|^{-1}Z(v)Z)+|N_{h}|^{-1}q$. In fact 
\begin{equation*}
div_{\s}(|N_{h}|^{-1}Z(v)Z)=Z(|N_{h}|^{-1}Z(v))+|N_{h}|^{-1}Z(v)div_{\s}Z.
\end{equation*}
So by Lemma \ref{conti}  we have
\begin{equation*}
Z(|N_{h}|^{-1}Z(v))=|N_{h}|^{-1}Z(Z(v))+|N_{h}|^{-2}g(N,T)(-|N_{h}|\theta(S)-c_1+|N_{h}|^2(c_1+g(\tau(Z),\nu_h)))Z(v)
\end{equation*}
and  $div_{\s}(Z)=-g(N,T)\theta(S)+g(N,T)|N_{h}|(c_1+g(\tau(Z),\nu_h))$. 

Finally it is sufficient to observe
\begin{equation*}
\begin{aligned}
0&=\m_{\s}div_{\s}(|N_{h}|^{-1}Z(v)uZ)d\s=\m_{\s}u\,div_{\s}(|N_{h}|^{-1}Z(v)Z)d\s+\m_{\s}|N_{h}|^{-1}Z(v)Z(u)d\s\\
&=\m_{\s}u\mathcal{L}(v)d\s+ \mathcal{I}(u,v).
\end{aligned}
\end{equation*} 

Really we need $u,v\in\mathcal{C}^1_0(\s-\s_0)$, but this condition can be weakened with an approximation argument, as in \cite[Proposition~3.2]{Ri-Ro-Hu}.

\end{proof}

Now we present the analogous of \cite[Lemma~3.17]{Ri-Ro-Hu} and \cite[Lemma~4.1]{Ri-Ro-Hu}, which are a sort of integration by parts and a useful stability operator for non-singular surfaces respectively. The proofs of the following Lemmae are straightforward generalization of the Heisenberg case.

\begin{lemma}\label{intbypart2} Let $\s$ be a $\mathcal{C}^2$ immersed surface in a \phMM, with unit normal vector $N$ and singular set $\s_{0}$. Consider two functions $u\in \mathcal{C}_{0} (\s -\s_{0})$ and $v\in \mathcal{C}(\s -\s_{0})$
which are $\mathcal{C}^1$ and $\mathcal{C}^2$ in the $Z$-direction respectively. Then we have
\begin{equation*}
\m_{\s} |N_{h}| \{  Z(u)Z(v)+uZ(Z(v))+c_1|N_{h}|^{-1}g(N,T)uZ(v)   \}d\s=0.
\end{equation*}

\end{lemma}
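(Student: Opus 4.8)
The plan is to recognize the integrand as the Riemannian tangential divergence of a single compactly supported vector field and then to apply the divergence theorem. Concretely, I would set $f:=|N_h|\,u\,Z(v)$ and show that the bracketed integrand equals $\di(fZ)$ pointwise on $\s-\s_0$. Since $u\in\mathcal{C}_0(\s-\s_0)$, the field $fZ$ has compact support in the regular set, so the Riemannian divergence theorem on $\s$ forces $\m_\s\di(fZ)\,d\s=0$, which is exactly the claim.

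To carry this out I would start from the second identity of Lemma~\ref{devergenceteorem}, which expresses $\di(fZ)$ for an arbitrary function $f$. Using $Z=J(\nu_h)$ one has $g(J(\nu_h),Z)=1$, and the symmetry of $\tau$ gives $g(\tau(\nu_h),Z)=g(\tau(Z),\nu_h)$, so that Lemma~\ref{devergenceteorem} reads
\[
\di(fZ)=Z(f)+f\,g(N,T)\big(-\theta(S)+|N_h|g(\tau(Z),\nu_h)+c_1|N_h|\big).
\]
Expanding $Z(f)$ by the Leibniz rule with $f=|N_h|uZ(v)$ produces $|N_h|Z(u)Z(v)+|N_h|uZ(Z(v))+Z(|N_h|)uZ(v)$, whose first two terms are precisely the bracketed part of the desired integrand. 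Hence the whole matter reduces to the pointwise scalar identity
\[
-Z(|N_h|)+c_1g(N,T)=|N_h|\,g(N,T)\big(-\theta(S)+|N_h|g(\tau(Z),\nu_h)+c_1|N_h|\big).
\]
This I would verify by inserting the value of $Z(|N_h|)$ obtained by combining parts (i) and (v) of Lemma~\ref{conti}, namely $Z(|N_h|)=g(N,T)\big(c_1g(N,T)^2-|N_h|^2g(\tau(Z),\nu_h)+|N_h|\theta(S)\big)$, and then using $g(N,T)^2+|N_h|^2=1$ to rewrite $c_1-c_1g(N,T)^2=c_1|N_h|^2$. After this substitution both sides collapse to $|N_h|g(N,T)\big(-\theta(S)+|N_h|g(\tau(Z),\nu_h)+c_1|N_h|\big)$, confirming that the integrand is exactly $\di(fZ)$.

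I expect the only genuine subtlety to be regularity rather than algebra. Note that $\di(fZ)=Z(f)+f\,g(D_S Z,S)$ involves only the $Z$-derivative of $f$, so the expression is well defined under the stated hypotheses ($u$ of class $\mathcal{C}^1$ and $v$ of class $\mathcal{C}^2$ in the $Z$-direction), even though $u,v$ need not be differentiable in the $S$-direction. To apply the divergence theorem honestly one should first establish the identity for $u,v\in\mathcal{C}^1_0(\s-\s_0)$ and then pass to the stated class by an approximation argument, exactly as in the proof of Proposition~\ref{stabilityoperatorI} and \cite[Proposition~3.2]{Ri-Ro-Hu}. The bookkeeping of the torsion term $g(\tau(Z),\nu_h)$ and of the constant $c_1$ in the scalar identity above is the main place where a sign or normalization slip could occur, so that is the step I would check most carefully.
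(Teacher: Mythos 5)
Your argument is correct: the integrand is indeed $\divv_\Sigma\big(|N_h|\,u\,Z(v)\,Z\big)$, your scalar identity checks out against Lemma \ref{conti} (i) and (v) together with $g(N,T)^2+|N_h|^2=1$, and the divergence theorem plus the approximation remark finishes the proof. This is essentially the same route the paper takes, since it defers to the Heisenberg-group proof of \cite[Lemma~3.17]{Ri-Ro-Hu}, which is exactly this computation.
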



\begin{lemma}\label{stabopercanonical} Let $\s$ be a $\mathcal{C}^2$ immersed minimal surface in a \phMM, with unit normal vector $N$ and singular set $\s_{0}$. For any function $u\in \mathcal{C}_{0} (\s -\s_{0})$ 
which is also $\mathcal{C}^1$ in the $Z$-direction we have
\begin{equation*}
\mathcal{I}(u|N_{h}|,u|N_{h}|)=\m_{\s} |N_{h}| \{  Z(u)^2-\mathcal{L}(|N_{h}|)u^2   \}d\s.
\end{equation*}

\end{lemma}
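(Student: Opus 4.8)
The plan is to reduce everything to the two facts already extracted for the operator $\mathcal{L}$ in the proof of Proposition~\ref{stabilityoperatorI}: the pairing $\mathcal{I}(u,v)=-\m_{\s}u\,\mathcal{L}(v)\,d\s$, and the divergence form $\mathcal{L}(w)=\divv_{\s}(|N_{h}|^{-1}Z(w)Z)-qw$. First I would set $w=u|N_{h}|$ and write $\mathcal{I}(u|N_{h}|,u|N_{h}|)=-\m_{\s}u|N_{h}|\,\mathcal{L}(u|N_{h}|)\,d\s$. The advantage of the divergence form is that it satisfies a clean Leibniz rule: since $|N_{h}|^{-1}Z(u|N_{h}|)=Z(u)+u|N_{h}|^{-1}Z(|N_{h}|)$, expanding $\divv_{\s}$ of the two summands and recognizing $\divv_{\s}(|N_{h}|^{-1}Z(|N_{h}|)Z)=\mathcal{L}(|N_{h}|)+q|N_{h}|$ gives
\begin{equation*}
\mathcal{L}(u|N_{h}|)=Z(Z(u))+Z(u)\,\divv_{\s}(Z)+|N_{h}|^{-1}Z(|N_{h}|)\,Z(u)+u\,\mathcal{L}(|N_{h}|),
\end{equation*}
where the term $u\,\mathcal{L}(|N_{h}|)$ absorbs the two $q$-contributions automatically.

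Multiplying by $-u|N_{h}|$ and integrating, the summand $-\m_{\s}u^{2}|N_{h}|\,\mathcal{L}(|N_{h}|)\,d\s$ is already the second term of the claimed identity, so it remains to show that the first-order part
\begin{equation*}
\m_{\s}\big\{-u|N_{h}|\,Z(Z(u))-u|N_{h}|\,Z(u)\,\divv_{\s}(Z)-uZ(u)\,Z(|N_{h}|)\big\}\,d\s
\end{equation*}
equals $\m_{\s}|N_{h}|\,Z(u)^{2}\,d\s$. Here I would use the pointwise identity $|N_{h}|\,\divv_{\s}(Z)+Z(|N_{h}|)=c_1\,g(N,T)$, obtained by combining $\divv_{\s}(Z)=-g(N,T)\theta(S)+g(N,T)|N_{h}|(c_1+g(\tau(Z),\nu_h))$ with the expression for $Z(|N_{h}|)$ coming from (i) and (v) of Lemma~\ref{conti}; this is exactly the step where the $\theta(S)$ and $g(\tau(Z),\nu_h)$ terms cancel and one uses $g(N,T)^2+|N_{h}|^2=1$. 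After this substitution the displayed integral becomes $\m_{\s}\{-u|N_{h}|Z(Z(u))-c_1 g(N,T)uZ(u)\}\,d\s$, and applying Lemma~\ref{intbypart2} with $v=u$ converts it into $\m_{\s}|N_{h}|Z(u)^{2}\,d\s$, which completes the identity.

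The algebra above is routine bookkeeping; the genuine difficulty, and where I expect the real work to sit, is regularity. A surface of class $\mathcal{C}^{2}$ yields $|N_{h}|$ of class $\mathcal{C}^{1}$ only, while the argument manipulates $Z(Z(u))$, $Z(|N_{h}|)$ and performs an integration by parts, so a naive divergence theorem is unavailable for $u$ merely $\mathcal{C}^{1}$ in the $Z$-direction. The resolution is that on $\s-\s_{0}$ the unit normal $N$ (hence $|N_{h}|$, $Z$ and $\nu_{h}$) is $\mathcal{C}^{\infty}$ \emph{along the characteristic direction}, as established in the lemma following \eqref{liebrachet}, so all iterated $Z$-derivatives above are meaningful and continuous; moreover the integration by parts is invoked not directly but through Lemma~\ref{intbypart2}, which is tailored to exactly this regularity and already encodes the cancellation $Z(|N_{h}|)+|N_{h}|\divv_{\s}(Z)=c_1 g(N,T)$. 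For this reason I would carry out the argument in the order above, through the operator $\mathcal{L}$ and Lemma~\ref{intbypart2}, rather than by a direct expansion of $\mathcal{I}(u|N_{h}|,u|N_{h}|)$, even though the latter is formally shorter.
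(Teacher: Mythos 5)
Your algebra is correct: the divergence form $\mathcal{L}(w)=\divv_{\s}(|N_{h}|^{-1}Z(w)Z)-qw$ (the corrected version of the formula displayed in Proposition~\ref{stabilityoperatorI}), the Leibniz expansion of $\mathcal{L}(u|N_{h}|)$, the pointwise identity $|N_{h}|\divv_{\s}(Z)+Z(|N_{h}|)=c_1\,g(N,T)$ (which indeed follows from $\divv_{\s}(Z)$ and Lemma~\ref{conti}~(i),(v) via $g(N,T)^2+|N_{h}|^2=1$), and the final appeal to Lemma~\ref{intbypart2} all check out. Your route is, however, genuinely different from the paper's: the paper (following \cite[Lemma~4.1]{Ri-Ro-Hu}) expands the quadratic form $\mathcal{I}(u|N_{h}|,u|N_{h}|)=\m_{\s}\{|N_{h}|^{-1}Z(u|N_{h}|)^2+q\,u^2|N_{h}|^2\}d\s$ directly, writes $Z(u|N_{h}|)=|N_{h}|Z(u)+uZ(|N_{h}|)$, and integrates the cross term by parts so that the \emph{second} $Z$-derivative falls on $|N_{h}|$, which is then recognized inside $\mathcal{L}(|N_{h}|)$. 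Your route instead pushes $u|N_{h}|$ through the second-order operator $\mathcal{L}$ and only afterwards transfers a derivative back off $u$. What the paper's order of operations buys is precisely the stated regularity: it never differentiates $u$ twice.

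That is where your argument has a real (though repairable) gap. The lemma assumes only that $u$ is $\mathcal{C}^1$ in the $Z$-direction, but both the identity $\mathcal{I}(u|N_{h}|,u|N_{h}|)=-\m_{\s}u|N_{h}|\,\mathcal{L}(u|N_{h}|)\,d\s$ and your application of Lemma~\ref{intbypart2} with $v=u$ require $Z(Z(u))$ to exist and be continuous; Lemma~\ref{intbypart2} explicitly demands that its second argument be $\mathcal{C}^2$ in the $Z$-direction. Your regularity discussion addresses the smoothness of $N$, $|N_{h}|$, $Z$, $\nu_{h}$ along characteristic curves, which takes care of the surface data but not of the test function $u$ --- and $u$ is the object your route differentiates twice. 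As written you have proved the lemma only for $u$ of class $\mathcal{C}^2$ in the $Z$-direction; to recover the stated statement you must either add an approximation argument (both sides of the identity are continuous in $u$ with respect to $\mathcal{C}^1$-convergence along $Z$ on compact sets), or rearrange the integration by parts as the paper does so that the iterated derivative lands on $|N_{h}|$, which is $\mathcal{C}^{\infty}$ along characteristic curves.
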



Now it is interesting compute $\mc{L}(|N_h|)$.

\begin{lemma} \label{LN_{h}} Let $\s$ be a $\mathcal{C}^2$ immersed minimal surface in a \phMM. Then
\begin{equation*}
\begin{aligned}
\mathcal{L}(|N_{h}|)&=W+c_1g(\tau(Z),\nu_h)-2c_1|N_{h}|^{-2}(|N_h|\theta(S)-|N_h|^2g(\tau(Z),\nu_h))-c_1^2|N_h|^{-2}g(N,T)^2).
\end{aligned}
\end{equation*}

\end{lemma}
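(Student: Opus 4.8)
The plan is to substitute $v=|N_h|$ directly into the second-order operator $\mathcal{L}$ defined in Proposition \ref{stabilityoperatorI} and to simplify the outcome using the minimality hypothesis $H=0$ together with Lemma \ref{conti}, the structure equations \eqref{liebrachet}, and the curvature identity \eqref{curvaturaT}. Writing $\mathcal{L}(|N_h|)$ out, it is the sum of three pieces: the second-order term $|N_h|^{-1}Z(Z(|N_h|))$, the first-order term $|N_h|^{-2}g(N,T)\big(-2|N_h|\theta(S)-c_1+2|N_h|^2(c_1+g(\tau(Z),\nu_h))\big)Z(|N_h|)$, and the zeroth-order term $-q|N_h|$ with $q$ read off from Theorem \ref{seconvariation}. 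So the computation reduces to finding $Z(|N_h|)$ and $Z(Z(|N_h|))$ and then collecting everything, keeping $|N_h|^2+g(N,T)^2=1$ at hand throughout to pass freely between the two normal components.

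First I would compute the first derivative. By Lemma \ref{conti}(i) one has $Z(|N_h|)=-|N_h|^{-1}g(N,T)\,Z(g(N,T))$, and feeding in the expression for $Z(g(N,T))$ from Lemma \ref{conti}(v) gives $Z(|N_h|)$ as an explicit polynomial in $g(N,T)$, $|N_h|$, $\theta(S)$, $c_1$ and $g(\tau(Z),\nu_h)$; this is the same formula already recorded in the non-geodesic remark following Theorem \ref{seconvariation}. The genuine work is the second derivative $Z(Z(|N_h|))$, obtained by differentiating this expression in the $Z$-direction: the derivative of $\theta(S)$ is supplied by \eqref{deromega} (which uses that $[Z,S]$ is tangent to $\s$ and produces the curvature term $g(R(S,Z)\nu_h,Z)$), the derivative of $g(N,T)$ is again Lemma \ref{conti}(v), and $Z$ applied to the torsion coefficient $g(\tau(Z),\nu_h)$ is left as such. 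Here I would use minimality in the form $\theta(Z)=g(\n_Z\nu_h,Z)=-H=0$ to drop the terms it annihilates.

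Finally I would assemble the three pieces. The curvature contribution $g(R(S,Z)\nu_h,Z)$ arising from $Z(Z(|N_h|))$ must be converted, via \eqref{curvaturaT} and the identity $g(R(Z,\nu_h)\nu_h,Z)=-W$, into the Webster scalar curvature $W$ and torsion terms, while the curvature term $g(N,T)g(R(Z,T)\nu_h,Z)$ buried in $-q|N_h|$ is treated the same way and cancels the spurious curvature. I expect the main obstacle to be purely combinatorial bookkeeping: the torsion monomials in $\theta(S)$, $g(\tau(Z),\nu_h)$ and their $Z$-derivatives proliferate, and one must verify that $Z(g(\tau(Z),\nu_h))$ and $\nu_h(g(\tau(Z),Z))$ cancel against the corresponding terms inside $q$, leaving only $W+c_1g(\tau(Z),\nu_h)$ together with the explicit $\theta(S)$ and $g(N,T)^2$ terms of the statement. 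A repeated appeal to $|N_h|^2+g(N,T)^2=1$ is then needed to collapse the coefficient of the curvature from the $|N_h|^2 W$ that appears in $-q|N_h|$ (the missing $g(N,T)^2W$ being precisely what the curvature term of $Z(Z(|N_h|))$ supplies) down to the bare $W$ of the conclusion.
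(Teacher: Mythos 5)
Your proposal is correct and follows essentially the same route as the paper: substitute $v=|N_h|$ into $\mathcal{L}$, get $Z(|N_h|)$ from Lemma \ref{conti}(i),(v), differentiate once more using \eqref{deromega} for $Z(\theta(S))$, and collect terms using $|N_h|^2+g(N,T)^2=1$, with the curvature contributions from $Z(Z(|N_h|))$ and from $-q|N_h|$ combining to the bare $W$ exactly as you describe. The only cosmetic difference is that the paper cancels the two $g(R(T,Z)\nu_h,Z)$ terms (and the two $Z(g(\tau(Z),\nu_h))$ terms) directly against each other rather than first expanding them via \eqref{curvaturaT}, which saves a little bookkeeping but changes nothing substantive.
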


\begin{proof}(of Lemma \ref{LN_{h}}) By (v) in Lemma \ref{conti} we have that
\begin{equation*}
\begin{aligned}
Z(|N_{h}|)=g(N,T)(|N_{h}|\theta(S)+c_1-|N_{h}|^2(c_1+g(\tau(Z),\nu_h)))
\end{aligned}
\end{equation*}
and so
\begin{equation*}
\begin{aligned}
Z(Z(|N_{h}|))=&-|N_{h}|(|N_{h}|\theta(S)+c_1-|N_{h}|^2(c_1+g(\tau(Z),\nu_h)))^2\\
&+Z(|N_{h}|)(g(N,T)\theta(S)-2g(N,T)|N_{h}|(c_1+g(\tau(Z),\nu_h)))\\
&+g(N,T)|N_{h}|Z(\theta(S))-g(N,T)|N_{h}|^2Z(g(\tau(Z),\nu_h)).
\end{aligned}
\end{equation*}
Now we observe that
\begin{equation*}
\begin{aligned}
-g(N,T)(c_1|N_{h}|^{-1}+\theta(S))&Z(|N_{h}|)=Z(|N_{h}|)(g(N,T)\theta(S)-g(N,T)|N_{h}|(c_1+g(\tau(Z),\nu_h)))\\
&+\frac{g(N,T)}{|N_{h}|}(-2|N_{h}|\theta(S)-c_1+|N_{h}|^2(c_1+g(\tau(Z),\nu_h)))Z(|N_{h}|)
\end{aligned}
\end{equation*}
and
\begin{equation*}
\begin{aligned}
-|N_{h}|(|N_{h}|\theta(S)+c_1-|N_{h}|^2(c_1+&g(\tau(Z),\nu_h)))^2=-|N_{h}|^3(\theta(S)-|N_{h}|(c_1+g(\tau(Z),\nu_h)))^2\\
&-c_1^2|N_{h}|-2c_1|N_{h}|^2(\theta(S)-|N_{h}|(c_1+g(\tau(Z),\nu_h))).
\end{aligned}
\end{equation*}
Now it is sufficient to substitute, use  Lemma \ref{conti} (v), and \eqref{deromega} to obtain the required formula.

\end{proof}

\begin{remark} By Lemma \ref{LN_{h}} it is simple to prove that our formula coincide with the one in \cite{CJHMY} in the special case of $\mathcal{C}^3$ surfaces, as the authors obtained that formula by deriving the mean curvature. By
\begin{equation*}
Z\left(\frac{g(N,T)}{|N_{h}|}\right)=|N_{h}|^{-3}Z(g(N,T))
\end{equation*}
and (v) in Lemma \ref{conti} we have
\begin{equation}\label{varyang}
\mathcal{L}(|N_{h}|)=W-c_1g(\tau(Z),\nu_h) +2c_1Z\Big(\frac{g(N,T)}{|N_{h}|}\Big)+c_1^2\frac{g(N,T)^2}{|N_{h}|^2}.
\end{equation}
Equation \eqref{varyang} gives an easy criterion for the stability of \emph{vertical surfaces}, which are the surfaces in which $g(N,T)\equiv 0$ holds. In the Heisenberg group these surfaces are vertical planes and their stability was first proved in \cite{Da-Ga-Nh-Pa}. 
\end{remark}

We conclude this section pasting the variations in the regular and in the singular set, to obtain a stability operator in the spirit of \cite[Proposition~4.11]{Ri-Ro-Hu}. By a tubular neighborhood of $(\s_0)_c\cap supp(u)$ we mean the union of the tubular neighborhood of each singular curve and each singular point lie in $supp(u)$. We note that we are interested in a finite number of singular curves and singular points, as we use variation function $u$ compactly supported and by Theorem \ref{singularsetth}.

\begin{theorem}\label{Stability operator II} Let $\s$ be a $\mathcal{C}^2$ oriented minimal surface immersed in a \phMM, with singular set $\s_0$ and $\partial\s=\emptyset$. If $\s$ is stable then, for any function $u\in\mathcal{C}^1_0(\s)$ such that $Z(u)=0$ in a tubular neighborhood of a singular curve and constant in a tubular neighborhood of an isolated singular point, we have $\mathcal{Q}(u)\geq 0$, where
\begin{equation*}
\mathcal{Q}(u):=\m_{\s}\{|N_h|^{-1}Z(u)^2+qu^2\}d\s+2\m_{(\s_0)_c}(\xi+\zeta)g(Z,\nu)u^2d(\s_0)_c+\m_{(\s_0)_c} S(u)^2  d(\s_0)_c.
\end{equation*}
Here $d(\s_0)_c$ is the Riemannian length measure on $(\s_0)_c$, $\nu$ is the external unit normal to $(\s_0)_c$ and $q,\xi,\zeta$ are defined in Theorem \ref{seconvariation}.
\end{theorem}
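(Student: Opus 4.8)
The plan is to realize $\mathcal{Q}(u)$ as the second derivative $A''(0)$ of the area under a single admissible variation and then invoke stability. Given $u\in\mathcal{C}^1_0(\s)$ satisfying the hypotheses, I would first build a variation vector field $U$ whose normal component equals $u$: in the regular part take $U=vN+wT$ as in Theorem \ref{seconvariation} (so that $u=g(U,N)=v+g(N,T)w$), while in a tubular neighborhood $U'_\eps$ of $(\s_0)_c\cap\mathrm{supp}(u)$ take the purely vertical field $U=wT$ with $w$ constant along the characteristic curves, exactly the variation treated in Lemma \ref{secondvar:scurve} and Lemma \ref{secondvar:spoint}. The condition that $Z(u)=0$ near each singular curve and that $u$ be constant near each isolated singular point is precisely what makes these two descriptions compatible and, more importantly, what guarantees integrability of the bulk term below. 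Since the area is additive over regions, I would split $A''(0)=A''(\s\setminus U'_\eps)+A''(U'_\eps)$ and analyse the two pieces separately before letting $\eps\to0$.

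On the regular piece $\s\setminus U'_\eps$ I would apply Theorem \ref{seconvariation}. This produces the bulk integrand $|N_h|^{-1}Z(u)^2+qu^2$ together with $\divv_\s(\xi Z+\zeta Z+\eta S)$; unlike the boundaryless case this divergence no longer integrates to zero, and the Riemannian divergence theorem converts it into a flux $\m_{\partial U'_\eps} g(\xi Z+\zeta Z+\eta S,\nu_{\mathrm{out}})\,d\ell$ over the inner boundary. The geometry of $U'_\eps$ is crucial here: because $U'_\eps$ is swept out by characteristic segments meeting $(\s_0)_c$ orthogonally (Corollary \ref{charmeetsingular}), the outward conormal $\nu_{\mathrm{out}}$ to $\partial U'_\eps$ is horizontal and points along $\pm Z$. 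Hence $g(\eta S,\nu_{\mathrm{out}})=0$ and only the $\xi+\zeta$ contribution survives. On the tubular piece $U'_\eps$ I would invoke Lemma \ref{secondvar:scurve} (and Lemma \ref{secondvar:spoint} near isolated points), whose bulk term $2w^2|N_h|(g(\tau(Z),\nu_h)^2+g(\tau(Z),Z)^2)$ carries a factor $|N_h|$ that vanishes on $(\s_0)_c$, while its own divergence term $\divv_\s(w^2g(\tau(Z),Z)S)$ again yields no flux since $S\perp\nu_{\mathrm{out}}$; the genuinely surviving contribution is the conormal integral $\m_{(\s_0)_c}S(w)^2$.

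Letting $\eps\to0$ I would then assemble the pieces. The regular bulk integral tends to $\m_{\s}\{|N_h|^{-1}Z(u)^2+qu^2\}d\s$, and this limit is finite precisely because the hypotheses force $Z(u)\equiv0$ on the tubular neighborhoods, killing the otherwise singular factor $|N_h|^{-1}$; the singular bulk integral tends to $0$ as $|U'_\eps|\to0$ and $|N_h|\to0$. The flux terms collapse onto $(\s_0)_c$: the two sheets $B_+$ and $B_-$ of $\s$ adjacent to a singular curve carry opposite $Z$ and opposite $g(N,T)$ (Theorem \ref{singularsetth}(ii)), so $\xi Z$ and $\zeta Z$ are continuous across $(\s_0)_c$ and the two one-sided limits of the flux add up rather than cancel, yielding the factor $2$ and producing $2\m_{(\s_0)_c}(\xi+\zeta)g(Z,\nu)u^2\,d(\s_0)_c$ (note $\zeta\to0$ there because of its $|N_h|^2$ factor). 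Finally, on each singular curve $N=\pm T$, so $u=\pm w$ and $S(w)^2=S(u)^2$, turning $\m_{(\s_0)_c}S(w)^2$ into the desired $\m_{(\s_0)_c}S(u)^2$. Summing gives $A''(0)=\mathcal{Q}(u)$, and stability yields $\mathcal{Q}(u)\geq0$.

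The main obstacle I expect is the careful bookkeeping of the boundary flux across $(\s_0)_c$: one must fix consistent orientations of $\nu_h,Z,N$ on both sheets, verify that $\xi Z+\zeta Z$ is continuous there, and check that the two one-sided flux limits combine with the correct sign to give the stated factor of $2$ rather than cancelling. A secondary technical point is justifying the $\eps\to0$ limits — in particular the integrability of $qu^2$ and the vanishing of the singular bulk and $S$-direction flux terms — which rests entirely on the regularity and constancy hypotheses on $u$ near $\s_0$ and on the orthogonal meeting of characteristic curves with the singular set.
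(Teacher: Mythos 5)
Your overall strategy --- realize $\mathcal{Q}(u)$ as $A''(0)$ for a variation that is normal in the regular part and vertical near $(\s_0)_c$, split the second variation over the two regions, convert the divergence terms into a flux along $\partial E_\sigma$, and let the tubular neighborhood shrink --- is the same as the paper's. But there is a genuine gap at the gluing step, and it is exactly the point to which the paper devotes the entire second half of its proof. If the variation is $U=wT$ near a singular curve and its normal component is to equal $u$, then $w=u/g(N,T)$. Lemma \ref{secondvar:scurve} requires $w$ to be constant along the characteristic curves in the tubular neighborhood, i.e. $Z\bigl(u/g(N,T)\bigr)=0$; this is \emph{not} implied by your hypothesis $Z(u)=0$, since $Z(g(N,T))=|N_h|\bigl(-c_1g(N,T)^2+|N_h|^2g(\tau(Z),\nu_h)-|N_h|\theta(S)\bigr)$ does not vanish identically off the singular curve. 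So your claim that ``$Z(u)=0$ near each singular curve \dots is precisely what makes these two descriptions compatible'' is false as stated: with $Z(u)=0$ the coefficient $u/g(N,T)$ of the vertical field is not constant along characteristics, and Lemma \ref{secondvar:scurve} does not apply.

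The paper bridges this in two stages. First it proves $\mathcal{Q}(v)\geq 0$ under the \emph{correct} admissibility condition $Z\bigl(v/g(N,T)\bigr)=0$ near $(\s_0)_c$, using the interpolated field $U_\sigma=(h_\sigma v)N+g_\sigma\,\tfrac{v}{g(N,T)}\,T$ with $h_\sigma+g_\sigma=1$ (note this keeps $g(U_\sigma,N)=v$ globally, which your proposal also leaves unexplained). Then, for $u$ with $Z(u)=0$ near $(\s_0)_c$, it sets $v_\sigma=\psi_\sigma u$ where $\psi_\sigma$ equals $|g(N,T)|$ on a shrinking neighborhood $D_\sigma$ and $1-\sigma$ outside, so that $v_\sigma/g(N,T)=\pm u$ is constant along characteristics there; the passage $\mathcal{Q}(v_\sigma)\to\mathcal{Q}(u)$ then rests on the nontrivial fact (from Lemma \ref{conti}) that $|N_h|^{-1}Z(g(N,T))^2$ extends continuously to $\s$, together with dominated convergence and Cauchy--Schwarz. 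Without this approximation argument, or some substitute for it, your proof does not establish the theorem under the stated hypothesis on $u$. The remainder of your sketch (orthogonality of characteristic curves to $(\s_0)_c$ from Corollary \ref{charmeetsingular} killing the $\eta S$ flux, the surviving $(\xi+\zeta)g(Z,\nu)$ boundary term, and $S(w)^2=S(u)^2$ on the singular curve since $g(N,T)=\pm1$ there) is consistent with the paper.
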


\begin{proof} First we observe that $\mathcal{Q}(u)$ is well defined for any $u\in\mathcal{C}_0(\s)$, which is piecewise $\mathcal{C}^1$ in the $Z$-direction and $\mathcal{C}^1$ when restricted to $\s_0$. First we prove
\begin{equation}\label{condizionepreliminare}
\begin{aligned}
\mathcal{Q}(v)\geq 0, &\text{ for any } v\in\mathcal{C}^1_0(\s) \text{ such that } Z(v/g(N,T))=0 \text{ in a small tubular} \\
&\text{ neighborhood } E \text{ of } (\s_0)_c.
\end{aligned}
\end{equation}
Here we denote $\s_0\cap supp(u)$ by $(\s_0)_c$.
Clearly the last hypothesis implies $|N_h|^{-1}Z(u)^2\in\mathcal{L}^1(\s)$. Denoting by $\sigma_0$ the radius of $E$ and by $K$ the support of $v$, respectively, we let $E_{\sigma}$ be the tubular neighborhood of $(\s_0)_c$ of radius $\sigma\in(0,\sigma_0/2)$ and let $h_{\sigma},g_{\sigma}$ be $\mathcal{C}^{\infty}_0(\s)$ functions such that $g_{\sigma}=1$ on $K\cap \overline{E_\sigma}$, $supp(g_{\sigma})\subset E_{2\sigma}$ and $h_{\sigma}+g_{\sigma}=1$ on $K$. Finally we define 
\begin{equation}
U_{\sigma}=(h_{\sigma}v)N+g_{\sigma}\frac{v}{g(N,T)}T.
\end{equation}
Observe that $supp(U_\sigma)\subset K$ and $g(U_{\sigma},N)=v$ on $K$. Now we define a variation $\varphi^{\sigma}_r (p)=exp_p(r(U_{\sigma})_p)$ and the area functional $A_{\sigma}(r)=A(\varphi^{\sigma}_r(\s))$. As this variation is vertical when restricted to $E_{\sigma}$ we have that $A''(\varphi^{\sigma}_r(E_{\sigma}))$ is given by 
\begin{equation*}
\begin{aligned}
A''(\varphi_{0}^{\sigma}(E_{\sigma}))&=\m_{E_{\sigma}}\{2 v^{2}|N_{h}|(g(\tau(Z),\nu_h)^{2}+g(\tau(Z),Z)^{2}   ) \}d\s+\m_{(\s_{0})_{c}}S(v)^{2}d\s_{0}
\end{aligned}
\end{equation*}
and by Theorem \ref{seconvariation} we have
\begin{equation*}
A''(\varphi_{0}^{\sigma}(\s-E_{\sigma}))=\m\limits_{\Sigma-E_{\sigma}} \{|N_{h}|^{-1} Z(u)^2 +u^2 q \}  d\Sigma+\m\limits_{\Sigma-E_{\sigma}} div_{\Sigma}(\xi Z+\zeta Z+\eta S)d\Sigma.
\end{equation*}
If $\s$ is stable then $A''(0)\geq 0$, so using the Riemannian divergence theorem we have
\begin{equation*}
\begin{aligned}
&\m_{E_{\sigma}}\{2 v^{2}|N_{h}|(g(\tau(Z),\nu_h)^{2}+(g(\tau(Z),Z))^{2}   ) \}d\s+\m_{(\s_{0})_{c}}S(v)^{2}d\s_{0}\\
&+\m\limits_{\Sigma-E_{\sigma}} \{|N_{h}|^{-1} Z(u)^2 +u^2 q \}  d\Sigma+2\m\limits_{\partial E_{\sigma}} (\xi Z+\zeta Z)g(Z,\nu)dl\geq 0,
\end{aligned}
\end{equation*}
where $\nu$ is the unit normal pointing into $E_{\sigma}$ and $dl$ denote the Riemannian length element. Letting $\sigma\rightarrow 0$, by the dominated convergence theorem we have proved condition \ref{condizionepreliminare}.

Now we suppose $u\in\mathcal{C}^1_0(\s)$ with $Z(u)=0$ in a tubular neighborhood $E$ of $(\s_0)_c$. Then for any $\sigma\in(0,1)$ let $D_{\sigma}$ be the open neighborhood of $(\s_0)_c$ such that $|g(N,T)|=1-\sigma$ on $\partial D_{\sigma}$. Exists $\sigma_0>0$ such that $D_{\sigma}\subset E$ for $\sigma\in(0,\sigma_0)$. Now we define the function $\phi_{\sigma}:\s\rightarrow [0,1]$ given by 
\begin{displaymath}
\phi_{\sigma}=\begin{cases}|g(N,T)|,    &  \text{in   }    \overline{D_{\sigma}},\\
1-\sigma,   &   \text{in   }  \s-D_{\sigma}.
\end{cases}
\end{displaymath}
We note that $\phi_{\sigma}$ is continuous, piecewise $\mathcal{C}^1$ in the $Z$-direction and the sequence $\{\phi_{\sigma}\}_{\sigma\in(0,\sigma_0)}$ pointwise converge to 1 when $\sigma\rightarrow 0$. Using Lemma \ref{conti}  we have that $|N_h|^{-1}Z(g(N,T))^2$ extends to a continuous function on $\s$, so
\begin{equation*}
\lim\limits_{\sigma\rightarrow\sigma_0}\m_{\s} |N_h|^{-1}Z(\phi_{\sigma})^2d\s=0.
\end{equation*}
Now little modifying $\phi_{\sigma}$ around $\partial D_{\sigma}$ we can consider a sequence of $\mathcal{C}^1$ functions $\{\psi_{\sigma}\}_{\sigma\in(0,\sigma_0)}$ with the same properties. Defining $v_{\sigma}=\psi_{\sigma}u$ we have $\mathcal{Q}(v_{\sigma})\geq 0$ for any $\sigma\in(0,\sigma_0)$ by condition \ref{condizionepreliminare}. Now is sufficient use the dominated convergence theorem and the Cauchy-Schwartz inequality in $L^2(\s)$ to show $\mathcal{Q}(v_{\sigma})\rightarrow \mathcal{Q}(u)$ for $\sigma\rightarrow 0$ and prove the statement.

\end{proof}

\section{Stable minimal surfaces inside a three-dimensional pseudo-hermitian sub-Riemannian manifolds.}

We present a generalization of \cite[Proposition~6.2]{Ro} in the case of a minimal vertical surface of class $\mathcal{C}^2$ inside a three dimensional pseudo-hermitian manifold. A surface $\s$  with unit normal vector $N$ is a \emph{vertical surface} if $g(N,T)\equiv 0$.  Obviously a vertical surface has empty singular set.

\begin{proposition}\label{stability:verticalsurfaces}
Let $\s$ be a $\mathcal{C}^2$ vertical minimal surface inside a \phMM. 
\begin{itemize}
\item [(i)] If $W-c_1g(\tau(Z),\nu_h)>0$ on $\s$, then $\s$ is unstable. 
\item [(ii)] If $W-c_1g(\tau(Z),\nu_h)\leq0$ on $\s$, then $\s$ is stable. 
\end{itemize}
\end{proposition}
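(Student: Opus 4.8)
The plan is to reduce the stability question for a vertical minimal surface to a one-dimensional problem along its characteristic curves, in which the sign of $W-c_1g(\tau(Z),\nu_h)$ decides everything. First I would record the consequences of verticality: since $g(N,T)\equiv 0$ we have $|N_{h}|\equiv 1$, the normal $N$ is horizontal so $N=\nu_h$ and $S=-T$, and the singular set is empty. With $\partial\s=\emptyset$ and compactly supported variation functions the divergence terms in Theorem \ref{seconvariation} integrate to zero, so for $U=vN+wT$ the second derivative of the area equals the index form $\mathcal{I}(u,u)$ with $u=g(U,N)=v$. Feeding $|N_{h}|\equiv 1$ into Lemma \ref{stabopercanonical} and using \eqref{varyang} (whose last two terms vanish because $g(N,T)\equiv 0$, giving $\mathcal{L}(|N_{h}|)=W-c_1g(\tau(Z),\nu_h)$), I obtain the clean expression
\[
\mathcal{I}(u,u)=\int_{\s}\big\{Z(u)^2-(W-c_1g(\tau(Z),\nu_h))\,u^2\big\}\,d\s,
\]
valid for every admissible $u$. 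Both assertions are then read off from this formula.

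For (ii), if $W-c_1g(\tau(Z),\nu_h)\le 0$ then the integrand is a sum of two nonnegative terms, so $\mathcal{I}(u,u)\ge\int_{\s}Z(u)^2\,d\s\ge 0$ for all admissible $u$; hence $\s$ is stable.

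For (i) I must exhibit a compactly supported $u$ with $\mathcal{I}(u,u)<0$. The decisive structural point is that the index form contains only the characteristic derivative $Z(u)$ and \emph{no} $S$-derivative, so it is effectively one-dimensional along the characteristic foliation. Fixing $p_0\in\s$, I would parametrize a neighborhood by $(s,\eps)$ with $s$ the arc-length along the characteristic curves (so $Z=\partial_s$) and $\eps$ labelling the curves, write $d\s=f\,ds\,d\eps$ with $f>0$, and test with $u(s,\eps)=\phi(s)\chi(\eps)$ for a fixed bump $\chi$ and a profile $\phi$ concentrated along the complete characteristic curve $\gamma$ through $p_0$. Along $\gamma$ the potential $P:=W-c_1g(\tau(Z),\nu_h)$ is strictly positive, so I would invoke the classical one-dimensional fact that $-\tfrac{d^2}{ds^2}-P$ with a positive potential always admits a destabilizing profile: taking $\phi_\lambda(s)=e^{-\lambda|s|}$ (suitably truncated) gives $\int(\phi_\lambda')^2\,ds=\lambda\to 0$ while $\int P\,\phi_\lambda^2\,ds$ stays bounded below by a positive constant, so the one-dimensional form is negative for small $\lambda$. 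Transplanting this profile into the tube and letting its transverse width shrink should then produce $u$ with $\mathcal{I}(u,u)<0$, proving instability.

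The hard part is exactly this last transplantation in the non-compact case. The index form over $\s$ does not factor as a product, because both $P$ and the Jacobian $f$ depend on the transverse variable, while the destabilizing profile $\phi_\lambda$ lives on an arbitrarily long piece of $\gamma$; I would therefore have to control the $\eps$-variation of $P$ and of $f$ along this long segment (by shrinking the tube fast enough, or through a weighted Sturm--Liouville argument with weight $f$) to guarantee that the negative one-dimensional contribution survives in two dimensions. When $\s$ is compact this obstacle evaporates, since $P\ge\min_{\s}P>0$, and one may simply localize on a characteristic segment of length exceeding $\pi/\sqrt{\min_{\s}P}$ and conclude by Sturm comparison.
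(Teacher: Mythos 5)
Your proposal is correct and follows essentially the same route as the paper: verticality gives $|N_h|\equiv 1$, so Lemma \ref{stabopercanonical} together with \eqref{varyang} reduces the index form to $\int_{\s}\{Z(u)^2-(W-c_1g(\tau(Z),\nu_h))u^2\}\,d\s$, from which (ii) is immediate and (i) follows by testing with (a cutoff of) the constant function $1$. The paper is in fact terser than you are on the non-compact case of (i) --- it simply invokes ``a suitable cut off of the constant function $1$'' --- so the transplantation issue you flag along long characteristic segments is glossed over there as well; your one-dimensional exponential profile along characteristic curves is a reasonable way to make that step explicit.
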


\begin{proof} For vertical surfaces \ref{varyang} becomes
\[
\mathcal{I}(u|N_{h}|,u|N_{h}|)=\m_{\s} |N_{h}| \Big\{  Z(u)^2-(W-c_1g(\tau(Z),\nu_h) )u^2   \Big\}d\s.
\]
When $W-c_1g(\tau(Z),\nu_h)>0$ and $\s$ is compact we can use the function $u\equiv 1$ to get the instability. In the non-compact case we can prove (i) with a suitable cut off of the constant function $1$. Point (ii) is immediate.
\end{proof}

It is remarkable that the sign of the quantities $W-c_1g(\tau(Z),\nu_h)$ can be studied at least for three-dimensional Lie groups carrying out a pseudo-hermitian structure. We have the following classification result \cite[Theorem~3.1]{Pe}

\begin{proposition}\label{classification} Let  $M$ be a simply connected contact 3-manifolds, homogeneous in the sense of Boothby and Wang, \cite{MR0112160}. Then $M$ is one of the following Lie group:
	\begin{itemize}
	\item [(1)] if $M$ is unimodular 
	         \begin{itemize}
	         \item the first Heisenberg group $\mathbb{H}^1$ when $W=|\tau|=0$;
	         \item the three-sphere group $SU(2)$ when $W> 2|\tau|$;
	         \item the group $\widetilde{SL(2,\mathbb{R})}$ when $-2|\tau|\neq W<2|\tau|$;
	         \item the group $\widetilde{E(2)}$, universal cover of the group of rigid motions of the Euclidean plane, when $W=2|\tau|>0$;
	         \item the group $E(1,1)$ of rigid motus of Minkowski 2-space, when $W=-2|\tau|<0$;
	         \end{itemize}
	\item [(2)] if $M$ is non-unimodular, the Lie algebra is given by
	        \[
	        [X,Y]=\alpha Y+2T ,\quad  [X,T]=\gamma Y, \quad  [Y,T]=0, \quad \alpha\neq 0,
	        \]
	        where $\{X,Y\}$ is an orthonormal basis of $\hhh$, $J(X)=Y$ and $T$ is the Reeb vector field. In this case $W<2|\tau|$ and when $\gamma=0$ the structure is Sasakian and $W=-\alpha^2$.
	\end{itemize}
	Here $|\tau|$ denote the norm of the matrix of the pseudo-hermitian torsion with respect to an orthonormal basis.
\end{proposition}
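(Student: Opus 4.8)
This is Theorem~3.1 of \cite{Pe}; I indicate the line of argument in the present notation. The plan is to pass to the Lie algebra. Since $M$ is simply connected and homogeneous in the sense of Boothby and Wang, \cite{MR0112160} identifies $M$ with a three-dimensional Lie group $G$, and the whole structure $(\omega,g,J,T)$ — hence also $\n$, the torsion $\tau$ and the Webster curvature $W$ — may be taken left-invariant. Thus every object is determined by its value on $\mathfrak{g}=T_eG$, equipped with the inner product $g_e$, the unit Reeb vector $T$ and the bracket, and the statement reduces to linear algebra on $\mathfrak{g}$.

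First I would handle the unimodular case following Milnor. Orienting $\mathfrak{g}$ and defining the endomorphism $L$ by $L(u\times v)=[u,v]$, unimodularity is equivalent to $L=L^{*}$, so there is an orthonormal eigenbasis $\{e_1,e_2,e_3\}$ with $[e_2,e_3]=\lambda_1e_1$, $[e_3,e_1]=\lambda_2e_2$, $[e_1,e_2]=\lambda_3e_3$. The first key step is to locate $T$: writing $T=\sum_ia_ie_i$, the Reeb condition $\mathcal{L}_T\omega=\iota_T\,d\omega=0$ becomes the three relations $a_ja_k(\lambda_k-\lambda_j)=0$ taken cyclically, and a short case analysis forces $T$ to be an eigenvector of $L$, say $T=e_3$; then $\hhh=\mathrm{span}\{e_1,e_2\}$, $X=e_1$ and $Y=e_2=J(X)$. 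Non-degeneracy of $d\omega$ on $\hhh$ forces $\lambda_3\neq0$, while $c_1=-g([X,Y],T)$ gives $|\lambda_3|=c_1$.

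The heart of the matter is to read off $\tau$ and $W$. Using \eqref{def:sigma} with $\sigma(V)=D_VT$ and the Koszul identity \eqref{phvsLC}, I would compute $\sigma$ on $\{e_1,e_2\}$: its symmetric part is $\tau$, and it is proportional to $\lambda_1-\lambda_2$, so $\tau$ vanishes (the Sasakian case) exactly when $\lambda_1=\lambda_2$, while its antisymmetric part reproduces $\tfrac{c_1}{2}J$. A longer but routine evaluation of the pseudo-hermitian curvature then expresses $W=-g(R(X,Y)Y,X)$ as a polynomial in $\lambda_1,\lambda_2,\lambda_3$. With $|\lambda_3|$ fixed, Milnor's classification by the signs of $(\lambda_1,\lambda_2)$ — which separates $\hh^1$, $SU(2)$, $\widetilde{SL(2,\mathbb{R})}$, $\widetilde{E(2)}$ and $E(1,1)$ — is then matched against the comparison of $W$ with $2|\tau|$, giving the five listed cases.

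Finally, in the non-unimodular case $L$ is no longer self-adjoint; the unimodular kernel $\mathfrak{u}=\{X:\mathrm{tr}(\mathrm{ad}_X)=0\}$ is a two-dimensional abelian ideal, and Milnor's normal form together with the contact conditions reduce the bracket to
\[
[X,Y]=\alpha Y+2T,\qquad [X,T]=\gamma Y,\qquad [Y,T]=0,\qquad \alpha\neq0,
\]
with $\{X,Y\}$ orthonormal and $J(X)=Y$. Computing $\tau$ and $W$ from these relations as before yields $W<2|\tau|$, and the case $\gamma=0$ makes $\tau$ vanish, leaving a Sasakian structure with $W=-\alpha^2$. I expect the principal obstacle throughout to be the explicit evaluation of $W$ via the pseudo-hermitian connection and the case-by-case verification that the resulting polynomials satisfy precisely the stated inequalities.
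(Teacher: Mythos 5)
The paper offers no proof of this proposition at all: it is stated verbatim as a quotation of \cite[Theorem~3.1]{Pe}, so there is no internal argument to compare against. Your sketch is a faithful reconstruction of Perrone's actual proof, which does proceed exactly as you describe: reduction to a left-invariant structure on a Lie algebra, Milnor's self-adjoint map $L$ with $L(u\times v)=[u,v]$ in the unimodular case, the observation that $\iota_T d\omega=0$ forces $T$ to be an eigenvector of $L$, and then the explicit computation of $\tau$ and $W$ in the resulting eigenbasis; the formulas \eqref{Wunimodular} and \eqref{Wnonunimodular} that the paper records immediately after the proposition are precisely the output of the ``routine evaluation'' you defer, and they confirm your claims that $2|\tau|$ is proportional to $|\lambda_1-\lambda_2|$ (equivalently $|c_2+c_3|$) while $W$ is proportional to $\lambda_3(\lambda_1+\lambda_2)$, so that the comparison of $W$ with $2|\tau|$ does match Milnor's sign classification of $(\lambda_1,\lambda_2)$. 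The only caveat is that your write-up is an outline rather than a complete verification --- the curvature computation and the five-way case check are asserted, not carried out --- but the structure of the argument is correct and is the one in the cited source.
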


A Lie Group is \emph{unimodular} when his left invariant Haar measure is also right invariant \cite[p.~248]{Pe}.

We remark that in \cite{Pe} the author gives the classification in terms of the equivalent invariant $W_1=W/4$ and $|\tau_1|=2\sqrt{2}|\tau|$. It is simple to show that if $M$ is unimodular then
\begin{equation}\label{Wunimodular}
W=\frac{c_1(c_3-c_2)}{2} \quad \text{and} \quad |\tau|=\frac{|c_2+c_3|}{2}, 
\end{equation}
where the Lie algebra of  $M$ is defined by
\[
[X,Y]=c_1 T, \quad [X,T]=c_2 Y, \quad [Y,T]=c_3 X,
\]
with $\{X,Y\}$ orthonormal basis of $\hhh$, $J(X)=Y$, $T$ the Reeb vector field and the normalization $c_1=-2$. In the non-unimodular case we have
\begin{equation}\label{Wnonunimodular}
W=-\alpha^2-\gamma \quad \text{and} \quad |\tau|=|\gamma|.
\end{equation}
Furthermore in a unimodular sub-Riemannian Lie group $G$ the matrix of $\tau$ in the ${X,Y,T}$ basis is 
\[
\left( \begin{array}{ccc} 
0 & \frac{c_2+c_3}{2}& 0 \\
\frac{c_2+c_3}{2} & 0& 0\\
0 & 0 & 0 \end{array} \right)
\]
and by \cite[p.~38]{Dr-To} we can compute the following derivatives
\begin{equation}\label{Gchistoffel}
\begin{aligned}
&\n_{X}X=0, \quad  \n_{Y}X=0,  \quad    \n_{T}X=\frac{c_3-c_2}{2}Y,    \\
&  \n_{X}Y=0, \quad    \n_{Y}Y=0, \quad    \n_{T}Y=\frac{c_2-c_3}{2}X.
\end{aligned}
\end{equation}


If we consider another orthonormal basis $\{X_{1},Y_{1},T\}$ where $J(X_{1})=Y_{1}, X_{1}=a_{1}X+a_{2}Y, Y_{1}=-a_{2}X+a_{1}Y$ the new torsion matrix becomes 
\begin{equation}\label{torsionframe}
\left( \begin{array}{ccc} 
(c_{2}+c_{3})a_{1}a_{2} & \frac{c_{2}+c_{3}}{2}(a_{1}^2-a_{2}^2)& 0 \\
\frac{c_{2}+c_{3}}{2}(a_{1}^2-a_{2}^2)& (c_{2}+c_{3})a_{1}a_{2}& 0\\
0 & 0 & 0 \end{array} \right).
\end{equation}

\begin{lemma}\label{R-e1tor} Let $\s$ a  surface of constant mean curvature $H$ immersed in a unimodular Lie group $G$ . Then
\[
g(R(T,Z)\nu_h,Z)-Z(g(\tau(Z),\nu_h))=2Hg(\tau(Z),Z),
\]
which vanishes when $\s$ is minimal. 
\end{lemma}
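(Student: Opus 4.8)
The plan is to split the left-hand side into its two natural pieces and evaluate each in the left-invariant orthonormal frame $\{X,Y,T\}$ of the unimodular group $G$, in which $Z$ and $\nuh$ are pointwise unit combinations of $X$ and $Y$. First I would show that the curvature term $g(R(T,Z)\nuh,Z)$ vanishes identically; then that the tangential derivative $Z(g(\tau(Z),\nuh))$ equals $-2Hg(\tau(Z),Z)$. Subtracting the second from the first leaves exactly $2Hg(\tau(Z),Z)$, which is $0$ precisely when $H=0$, i.e. when $\s$ is minimal.

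The vanishing of the curvature term is where the unimodular hypothesis does the work, through the Christoffel symbols \eqref{Gchistoffel}. All horizontal--horizontal covariant derivatives vanish and the structure constants are constant, so expanding $R(T,V)W=\n_W\n_T V-\n_T\n_W V+\n_{[T,W]}V$ for each pair $V,W\in\{X,Y\}$ gives $0$ in every case; by the tensoriality and bilinearity of $R$ this yields $g(R(T,Z)\nuh,Z)=0$. The same data supply a fact I shall reuse: since the pseudo-hermitian torsion has constant coefficients $\tau(X)=\frac{c_2+c_3}{2}Y$ and $\tau(Y)=\frac{c_2+c_3}{2}X$ (read off from \eqref{torsionframe}) while the horizontal Christoffel symbols vanish, one gets $(\n_V\tau)W=0$ for all horizontal $V,W$, hence $g((\n_Z\tau)Z,\nuh)=0$.

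For the derivative term I would expand it with the metric connection as
\[
Z(g(\tau(Z),\nuh))=g((\n_Z\tau)Z,\nuh)+g(\tau(\n_Z Z),\nuh)+g(\tau(Z),\n_Z\nuh).
\]
The first summand is $0$ by the previous step. Because $\n$ preserves $\hhh$ (as $\n T=0$ by \eqref{nablaT}) and $|Z|=|\nuh|=1$, the definition \eqref{eq:meancur} of the mean curvature gives $\n_Z\nuh=-HZ$, and correspondingly $\n_Z Z=H\nuh$ (this is the characteristic curve equation \eqref{eq:carcurvesph}). Substituting, and using that $\tau|_\hhh$ is trace-free — its matrix in $\{X,Y\}$ has zero diagonal, so $g(\tau(\nuh),\nuh)=-g(\tau(Z),Z)$ — I obtain $Z(g(\tau(Z),\nuh))=Hg(\tau(\nuh),\nuh)-Hg(\tau(Z),Z)=-2Hg(\tau(Z),Z)$. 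Combining with $g(R(T,Z)\nuh,Z)=0$ proves the identity.

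The one point that needs care is the normal direction. The tempting shortcut is to insert $g(R(T,Z)\nuh,Z)=0$ and $\omega(\nuh)=0$ into \eqref{curvaturaT}, which collapses the claim to $\nuh(g(\tau(Z),Z))=0$; I would avoid this, because $\nuh$ is transverse to $\s$ and that derivative depends on how $Z$ is extended off the surface, whereas the tangential derivative $Z(g(\tau(Z),\nuh))$ used above is intrinsic. So the real work — and the only place an error could hide — is in the two frame identities $(\n_V\tau)W=0$ and $g(\tau(\nuh),\nuh)=-g(\tau(Z),Z)$ together with the covariant relations $\n_Z\nuh=-HZ$ and $\n_Z Z=H\nuh$; each is short once \eqref{Gchistoffel} is in hand, and none requires moving off $\s$.
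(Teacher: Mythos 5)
Your proof is correct, and it takes a genuinely different route from the paper's on the curvature half of the identity. The paper never computes $g(R(T,Z)\nu_h,Z)$ directly: it converts it, via \eqref{equivRTRLCT} and the general identity \eqref{curvaturaT}, into the two torsion derivatives $Z(g(\tau(Z),\nu_h))$ and $\nu_h(g(\tau(Z),Z))$, and then evaluates both in the left-invariant frame (equations \eqref{dertor} and \eqref{dertornu}); in particular the paper's route does require the transverse derivative that you deliberately avoid. You instead observe that, because all horizontal Christoffel symbols in \eqref{Gchistoffel} vanish and the structure constants are constant, $R(T,V)W=0$ for all $V,W\in\{X,Y\}$, so $g(R(T,Z)\nu_h,Z)=0$ by tensoriality --- a statement consistent with, and in fact implied by, the paper's \eqref{dertor} combined with the lemma itself. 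Your treatment of the tangential derivative is essentially the same computation as \eqref{dertor}, written covariantly: $(\n_Z\tau)Z=0$, $\n_Z\nu_h=-HZ$, $\n_Z Z=H\nu_h$ and the trace-freeness of $\tau$ on $\hhh$ give $Z(g(\tau(Z),\nu_h))=-2Hg(\tau(Z),Z)$, which agrees with the paper's $2Hg(\tau(\nu_h),\nu_h)$. What your route buys is that it is entirely tangential (no extension of $Z$ off $\s$ is needed) and it isolates the unimodular hypothesis in the single clean fact that $R(T,\cdot)\cdot$ annihilates $\hhh$; what the paper's route buys is reuse of \eqref{curvaturaT}, which it needs anyway for the second variation formula. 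Two cosmetic points: the constant torsion matrix $\tau(X)=\tfrac{c_2+c_3}{2}Y$, $\tau(Y)=\tfrac{c_2+c_3}{2}X$ is the unlabeled display preceding \eqref{Gchistoffel} rather than \eqref{torsionframe} (the rotated-frame version), and the quantity $2Hg(\tau(Z),Z)$ vanishes when $H=0$ but also wherever $g(\tau(Z),Z)=0$, so ``precisely when $H=0$'' is a slight overstatement; neither affects the argument.
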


\begin{proof} By \eqref{torsionframe} we can express $g(\tau(Z),\nu_h)=(c_2+c_3)(1-2g(Z,X)^2)/2$ and
\[
Z(g(\tau(Z),\nu_h))=-2(c_2+c_3)g(Z,X)(g(\n_{Z}Z,X)+g(\n_{Z}X,Z)). 
\]
Taking into account \eqref{eq:carcurvesph} and \eqref{Gchistoffel} we obtain
\begin{equation}\label{dertor}
Z(g(\tau(Z),\nu_h))=2 H g(\tau(\nu_h),\nu_h).
\end{equation}
On the other hand $\nu_h(g(\tau(Z),Z))=(c_2+c_3)\nu_h(g(\nu_h,X)g(\nu_h,Y))$, calculating
\begin{equation}\label{dertornu}
\begin{aligned}
\nu_h(g(\tau(Z),Z))=&(c_2+c_3)g(\nu_h,Y)(g(\n_{\nu_h}\nu_h,X)+g(\n_{\nu_h}X,\nu_h))\\
&+(c_2+c_3)g(\nu_h,X)(g(\n_{\nu_h}\nu_h,Y)+g(\n_{\nu_h}Y,\nu_h))\\
&=-2\theta(\nu_h)g(\tau(Z),\nu_h),
\end{aligned}
\end{equation}
where we have used \eqref{Gchistoffel}. Finally taking into account \eqref{equivRTRLCT}, \eqref{curvaturaT}, \eqref{dertor} and \eqref{dertornu} we get the claim. 
\end{proof}

\begin{lemma}\label{indiceuv} Let $\s$ be a $\mathcal{C}^2$ immersed minimal surface in $M$. Consider two functions $u\in\mathcal{C}(\s-\s_0)$ and $v\in\mathcal{C}(\s-\s_0)$ which are $\mathcal{C}^1$ and $\mc{C}^2$ in the $Z$-direction, respectively. If $v$ never vanishes, then
\begin{equation}\label{eq:indiceuv}
\begin{aligned}
\mc{I}(uv^{-1}|N_h|,uv_{-1}|N_h|)=&\m_{\s}|N_h|v^{-2}Z(u)^2d\s\\
&+\m_{\s} |N_h|u^2\{  Z(v^{-1})^2-\frac{1}{2}Z(Z(v^{-4}))-\frac{c_1}{2}\frac{g(N,T)}{|N_h|}Z(v^{-4})  \}d\s\\
&-\m_{\s} |N_h|\mc{L}(|N_h|)(uv^{-1})^2d\s.
\end{aligned}
\end{equation}

\end{lemma}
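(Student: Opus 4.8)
The plan is to deduce the identity from the canonical stability formula of Lemma~\ref{stabopercanonical}, applied to the test function $uv^{-1}$, and then to remove the cross term produced by the Leibniz rule using the integration-by-parts identity of Lemma~\ref{intbypart2}. Since $v$ never vanishes on $\s-\s_0$ and $u,v$ have the prescribed $Z$-regularity, the product $uv^{-1}$ is again continuous and $\mc{C}^1$ in the $Z$-direction, hence an admissible argument for $\mc{I}$ and for Lemma~\ref{stabopercanonical}.

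First I would substitute $uv^{-1}$ for $u$ in Lemma~\ref{stabopercanonical}, which gives
\begin{equation*}
\mc{I}(uv^{-1}|N_h|,uv^{-1}|N_h|)=\m_{\s}|N_h|\{Z(uv^{-1})^2-\mc{L}(|N_h|)(uv^{-1})^2\}d\s.
\end{equation*}
The last summand is already the final term of \eqref{eq:indiceuv}, so it remains only to reorganize the gradient term. Expanding by the Leibniz rule,
\begin{equation*}
Z(uv^{-1})^2=v^{-2}Z(u)^2+2uv^{-1}Z(u)Z(v^{-1})+u^2Z(v^{-1})^2,
\end{equation*}
the first summand produces $\m_\s|N_h|v^{-2}Z(u)^2\,d\s$ and the third contributes the $u^2Z(v^{-1})^2$ piece of the middle bracket.

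Second, I would dispose of the cross term. Using $2uZ(u)=Z(u^2)$ and $v^{-1}Z(v^{-1})=\frac{1}{2}Z(v^{-2})$ I would rewrite it as $\frac{1}{2}Z(u^2)Z(v^{-2})$, and then apply Lemma~\ref{intbypart2} with the two functions in that lemma taken to be $u^2$ and $v^{-2}$. This replaces $\m_\s|N_h|Z(u^2)Z(v^{-2})\,d\s$ by $-\m_\s|N_h|\{u^2Z(Z(v^{-2}))+c_1|N_h|^{-1}g(N,T)\,u^2Z(v^{-2})\}\,d\s$; assembling this with the $u^2Z(v^{-1})^2$ term completes the middle bracket of \eqref{eq:indiceuv}.

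The part that requires the most care is the bookkeeping of the powers of $v$ in the second-derivative terms: the clean computation above organizes them around $Z(Z(v^{-2}))$ and $Z(v^{-2})$, and one should check this against the normalization written in the statement. Beyond this, the only thing to verify is the admissibility of Lemma~\ref{intbypart2}, namely that $u^2$ is compactly supported and $\mc{C}^1$ in the $Z$-direction and that $v^{-2}$ is $\mc{C}^2$ in the $Z$-direction, all inherited from the hypotheses on $u,v$ and from $v\neq0$. No new geometric ingredient is needed: the whole computation rests on Lemmae~\ref{stabopercanonical} and~\ref{intbypart2}.
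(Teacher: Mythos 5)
Your argument is exactly the paper's intended proof: the paper's own proof of Lemma \ref{indiceuv} is the single sentence that it is ``the same as \cite[Lemma~4.3]{Ri-Ro-Hu} with Lemma \ref{intbypart2} in place of \cite[Lemma~3.17]{Ri-Ro-Hu}'', and what you spell out --- Lemma \ref{stabopercanonical} applied to $uv^{-1}$, the Leibniz expansion of $Z(uv^{-1})^2$, and Lemma \ref{intbypart2} applied to the admissible pair $(u^{2},v^{-2})$ to eliminate the cross term $\tfrac12 Z(u^{2})Z(v^{-2})$ --- is precisely that argument. The one issue you flagged, the bookkeeping of the powers of $v$, is worth settling in your favour: the computation yields
\begin{equation*}
\m_{\s}|N_h|v^{-2}Z(u)^2\,d\s+\m_{\s} |N_h|u^2\Big\{  Z(v^{-1})^2-\tfrac{1}{2}Z(Z(v^{-2}))-\tfrac{c_1}{2}\tfrac{g(N,T)}{|N_h|}Z(v^{-2})  \Big\}\,d\s-\m_{\s} |N_h|\mc{L}(|N_h|)(uv^{-1})^2\,d\s,
\end{equation*}
with exponent $-2$ in the last two terms of the bracket. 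The exponent $-4$ printed in \eqref{eq:indiceuv} is a typo: the statement as written is internally inconsistent, since the test function $uv^{-1}$, the term $Z(v^{-1})^2$ and the weight $v^{-2}$ in the first integral all force $Z(Z(v^{-2}))$ and $Z(v^{-2})$ in the remaining terms (the cross term is unambiguously $v^{-1}Z(v^{-1})=\tfrac12 Z(v^{-2})$, and no manipulation produces $v^{-4}$ there). The expressions $((v_\eps^{-2})')^2$, $(v_\eps^{-4})''$, $(v_\eps^{-4})'$ appearing later in the proof of Proposition \ref{necessariastabilita} are consistent with the corrected lemma once one observes that there it is applied with $v$ replaced by $v_\eps^{2}=f_\eps|N_h|$. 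Your admissibility check for Lemma \ref{intbypart2} (that $u^{2}$ is compactly supported and $\mc{C}^1$ in the $Z$-direction, and $v^{-2}$ is $\mc{C}^2$ in the $Z$-direction because $v$ never vanishes) is the right one and holds.
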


The proof is the same as of \cite[Lemma~4.3]{Ri-Ro-Hu} except that Lemma \ref{intbypart2} is used instead of \cite[Lemma~3.17]{Ri-Ro-Hu}.

\begin{proposition}\label{segnoL} Let $\s$ be a complete orientable $\mathcal{C}^2$ minimal surface  with empty singular set immersed in a \phMM. We suppose that $g(R(T,Z)\nu_h,Z)-Z(g(\tau(Z),\nu_h))=0$ on $\s$ . If 
\[
(W-c_1g(\tau(Z),\nu_h))(p_0)\geq 0
\] 
for some $p_0\in \s$, then the operator $\mc{L}$ satisfies $\mc{L}(|N_h|)\geq 0$ on the characteristic curve $\ga_0$ passing through $p_0$. Moreover, $\mc{L}(|N_h|)=0$ over $\s$ if and only if $g(N,T)=0$ and $W-c_1g(\tau(Z),\nu_h)=0$ on $\ga_0$.

\end{proposition}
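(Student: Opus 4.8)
The plan is to convert the statement into an ordinary differential equation along the single characteristic curve $\ga_0$, parametrized by arc length $s$ so that $Z=d/ds$; since $\s$ is minimal, $\ga_0$ has curvature $\lambda=0$. First I would attach to $\ga_0$ the Jacobi-like field $V$ coming from the variation of $\ga_0$ by characteristic curves (the lemma opening Section~7 lets one apply Proposition~\ref{jacobifields} with $U=Z$ along the integral curve of $S$), and record its vertical component $\phi:=g(V,T)$. Applying Lemma~\ref{jacobianoorizzontale} to the basis $\{Z,V\}$ of $T\s$, and using $g(T,Z)=0$, yields $|N_h|=|\phi|\,G(Z,V)^{-1/2}$, so $|N_h|$ and $\phi$ differ only by the positive Jacobian factor; since $g(V,\nu_h)=-c_1^{-1}\phi'$ by \eqref{vt1}, this factor is $G(Z,V)^{1/2}=(\phi^2+c_1^{-2}(\phi')^2)^{1/2}$, and consequently $c_1\,g(N,T)/|N_h|=\pm\,\phi'/\phi$.

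Next I would use the hypothesis to collapse the third--order Jacobi equation of Proposition~\ref{jacobifields}(iv). With $\lambda=0$ and $J(Z)=-\nu_h$, the coefficient $\beta_2$ equals, up to sign, $g(R(T,Z)\nu_h,Z)-Z(g(\tau(Z),\nu_h))$, which vanishes on $\s$ by assumption; hence $\phi$ obeys the reduced equation $\phi'''+W_\ast\,\phi'=0$, where $W_\ast:=W-c_1g(\tau(Z),\nu_h)$, equivalently $\psi:=\phi'$ solves the Sturm equation $\psi''+W_\ast\,\psi=0$. (Here one must reconcile the sign of the torsion term in $\beta_1$ with the one in \eqref{varyang}; the vertical--surface computation behind Proposition~\ref{stability:verticalsurfaces} fixes the potential to be exactly $W_\ast$, and this is the normalization I would adopt.) Substituting $c_1\,g(N,T)/|N_h|=\phi'/\phi$ into the formula \eqref{varyang} for $\mc{L}(|N_h|)$ and simplifying gives the key identity
\[
\mc{L}(|N_h|)=\frac{2\phi''\phi-(\phi')^2+W_\ast\,\phi^2}{\phi^2}=:\frac{\mathcal{E}}{\phi^2},
\]
so the sign of $\mc{L}(|N_h|)$ is precisely the sign of the energy $\mathcal{E}$.

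The crucial global input enters now. Differentiating $\mathcal{E}$ and invoking the reduced equation produces the clean relation $\mathcal{E}'=W_\ast'\,\phi^2$; in particular $\mathcal{E}$ is constant whenever $W_\ast$ is constant along $\ga_0$, which is the situation in the intended applications. Because $\s$ is complete with empty singular set, $\ga_0$ is defined for all $s\in\rr$ and $|N_h|$, hence $\phi$, never vanishes; I fix the sign so that $\phi>0$ on $\rr$. For constant $W_\ast=k\ge0$ one then evaluates the (constant) $\mathcal{E}$ on the solutions of $\psi''+k\psi=0$ and checks that the requirement $\phi>0$ for all $s$ --- amplitude below the mean when $k>0$, and $4AC-B^2\ge0$ for the quadratic $\phi$ when $k=0$ --- is exactly equivalent to $\mathcal{E}\ge0$; since $W_\ast(p_0)=k\ge0$, this gives $\mc{L}(|N_h|)=\mathcal{E}/\phi^2\ge0$ along $\ga_0$. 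Tracing the equality case through the same computation shows $\mathcal{E}\equiv0$ forces $\phi'\equiv0$, i.e. $g(N,T)\equiv0$ (a vertical surface) together with $W_\ast\equiv0$, which is the asserted rigidity.

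I expect the main obstacle to be the positivity of $\mathcal{E}$ when $W_\ast$ is not constant along $\ga_0$, since then $\mathcal{E}$ genuinely varies and one controls only $\mathcal{E}'=W_\ast'\,\phi^2$. The right mechanism is a Sturm/conjugate--point argument: a sign change of $\mathcal{E}$ on the complete curve would force the Jacobi field $\phi$ (equivalently $|N_h|$) to acquire a zero, producing a point of $\s_0$ and contradicting both completeness and $\s_0=\emptyset$. Making this oscillation argument rigorous, while keeping the delicate sign conventions of $\beta_1$ versus \eqref{varyang} straight, is where the real work lies; the constant--$W_\ast$ computation above is the model that pins down both the inequality and the equality statement.
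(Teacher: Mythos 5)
Your proposal is correct and follows essentially the same route as the paper: the Jacobi-like field $V$ along $\ga_0$ with vertical component $\phi=g(V,T)$ never vanishing because $\s_0=\emptyset$, the reduced equation $\phi'''+(W-c_1g(\tau(Z),\nu_h))\phi'=0$ coming from Proposition \ref{jacobifields}(iv) under the curvature--torsion hypothesis, and the translation of non-vanishing of the explicit solutions into $\mc{L}(|N_h|)\geq 0$ via \eqref{varyang}; your conserved energy $\mathcal{E}$ with $\mc{L}(|N_h|)=\mathcal{E}/\phi^2$ is an invariant repackaging of the paper's explicit checks $b^2-4ac<0$ and $0<k^2|N_h|^2-2k|N_h|b-a^2=|N_h|^2\mc{L}(|N_h|)$. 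The non-constant case you defer as ``the real work'' is not treated in the paper either: its proof invokes \eqref{dertor} (Lemma \ref{R-e1tor}, giving $Z(g(\tau(Z),\nu_h))=0$ on minimal surfaces) to write $W-c_1g(\tau(Z),\nu_h)=k^2$ with $k$ constant along $\ga_0$ and then solves the constant-coefficient ODE exactly as you do.
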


\begin{proof} We consider a point $p\in\s$. Let $I$ an open interval containing the origin and $\alpha:I\rightarrow \s$ a piece of the integral curve of $S$ passing through $p$. Consider the characteristic curve $\gamma_\eps(s)$ of $\s$ with $\gamma_\eps(0)=\alpha(\eps)$. We define the map $F:I\times\rr\rightarrow \s$ given by $F(\eps,s)=\gamma_\eps(s)$ and denote $V(s):=(\partial F/\partial \eps)(0,s)$ which is a Jacobi-like vector field along $\gamma_0$, Proposition \ref{jacobifields}. Clearly $V(0)=(S)_p$. We denote by $'$ the derivatives of functions depending on $s$, and the covariant derivative along $\gamma_0$ respect to $\n$ and $\dot{\gamma}_0$ by $Z$. By \eqref{vt1} and \eqref{vt2}
\begin{equation}\label{vt1s}
g(V,T)'=-c_1g(V,\nu_h),
\end{equation}
\begin{equation}\label{vt2s}
\frac{1}{c_1}g(V,T)''=c_1\lambda g(V,Z)-Z(g(V,\nu_h))
\end{equation}
since 
\[
g(V',\nu_h)=Z(g(V,\nu_h))+g(V,J(\n_{Z}Z))=Z(g(V,\nu_h)).
\]
Now we show that $\{V,Z\}$ is a basis of $T\s$ along $\gamma_0$. It is sufficient to show that $g(V,T)$ and $g(V,\nu_h)$ do not vanish simultaneously. Suppose there exists $s_0$ such that $g(V,T)(s_0)=g(V,\nu_h)(s_0)=0$. This means that $V(s_0)$ is co-linear with $(Z)_{s_0}$ and $$g(V,T)'(s_0)=g(V,T)''(s_0)=0$$ by \eqref{vt1s} and \eqref{vt2s}.
As $g(V,T)$ satisfies the differential equation in Proposition \ref{jacobifields} (iv) we deduce $g(V,T)=0$ along $\gamma_0$ which is impossible as $g(V,T)(0)=-|N_h|<0$. We have proved that $g(V,T)$ never vanishes along $\gamma_0$ as $\s_0$ is empty.

By \eqref{dertor} we have $W-c_1g(\tau(Z),\nu_h)=k^2$, with $k\geq0$. If $k=0$ then solving the ordinary differential equation in Proposition \ref{jacobifields} (iv) we have
\[
g(V,T)(s)=as^2+bs+c,
\]
where $a,b,c$ are given by
\begin{align*}
&a=g(V,T)''(0)/2=-c_1Z(g(N,T))/2,\\
&b=g(V,T)'(0)=-c_1g(N,T),\\
&c=g(V,T)(0)=-|N_h|.
\end{align*}
Now $g(V,T)\neq 0$ implies $b^2-4ac<0$ or $a=b=0$. In the first case we get
\[
b^2-4ac=\{c_1^2g(N,T)^2-2c_1|N_h|Z(g(N,T))\}\geq -\{c_1^2g(N,T)^2+2c_1|N_h|Z(g(N,T))\}
\]
and the right term is equal to
\[
-|N_h|^2\left\{2c_1Z\left(\frac{g(N,T)}{|N_h|}\right)+c_1^2\frac{g(N,T)^2}{|N_h|^2}\right\},
\]
which implies $\mc{L}(|N_h|)\geq 0$. On the other hand $a=b=0$ implies that $\s$ is a vertical surface and $\mc{L}(|N_h|)=0$. We note that in any vertical surface $b^2-4ac=0$ so that $\mc{L}(|N_h|)=0$.

Now we suppose $k\neq0$. Then by Proposition \ref{jacobifields} (iv) we get 
\[
g(V,T)(s)=\frac{1}{k}(a\sin(k s)-b\cos(k s))+c, 
\]
with $a,b,c$ given by
\begin{align*}
&a=g(V,T)'(0)=-c_1g(N,T),\\
&b=\frac{1}{k}g(V,T)''(0)=-\frac{c_1}{k}Z(g(N,T)),\\
&c=\frac{1}{k^2}g(V,T)''(0)+g(V,T)(0)=\frac{b}{k}-|N_h|.
\end{align*}
As in \cite[Proof of Proposition~6.6]{Ro} we have $g(V,T)(s)\neq 0$ for all $s$ if and only if 
\[
0<k^2|N_h|^2-2 k|N_h|b-a^2=|N_h|^2\mc{L}(|N_h|),
\]
which implies $\mc{L}(|N_h|)>0$.

\end{proof}

\begin{lemma}\label{char:iniettive} Let $\s$ be a $\mc{C}^2$ complete, oriented, immersed, CMC surface with empty singular set in a \phMM. Then any characteristic curve of $\s$ is an injective curve or a close curve. 
\end{lemma}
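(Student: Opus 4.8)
The plan is to recognize the statement as an instance of the standard dichotomy for orbits of a flow generated by a non-vanishing, locally Lipschitz vector field on a complete manifold. First I would observe that, since $\s_0=\emptyset$, the characteristic vector field $Z=J(\nu_h)$ is globally defined on $\s$, nowhere vanishing and of unit length; moreover it is of class $\mc{C}^1$, because $\s$ is $\mc{C}^2$ (so $N$, and hence $\nu_h=N_h/\mnh$ with $\mnh>0$ everywhere, is $\mc{C}^1$, and $J$ is smooth). In particular $Z$ is locally Lipschitz, so for each $p\in\s$ the Cauchy problem $\dot{\ga}=Z(\ga)$, $\ga(0)=p$, has a unique maximal solution, and the characteristic curve through $p$ is exactly the orbit of $p$ under the local flow of $Z$.

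Next I would establish completeness of this flow using the completeness of $\s$. Since $|Z|\equiv 1$, any integral curve is parametrized by arc length, so on a finite time interval $[0,T]$ its image has length at most $T$ and therefore lies inside the closed metric ball $\overline{B}(p,T)$, which is compact by Hopf--Rinow. A maximal solution cannot leave every compact set in finite time, so its interval of existence is all of $\rr$; thus $Z$ generates a complete flow $\{\phi_t\}_{t\in\rr}$ on $\s$, with $\ga(t)=\phi_t(p)$.

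Then the dichotomy follows from the group structure of the flow. Fix a characteristic curve $\ga$, the orbit of a point $p$, and set $P:=\{t\in\rr:\phi_t(p)=p\}$. The flow identity $\phi_{t+s}=\phi_t\circ\phi_s$ together with uniqueness shows that $P$ is a subgroup of $(\rr,+)$, and continuity of $t\mapsto\phi_t(p)$ shows it is closed; hence $P$ is one of $\{0\}$, $\tau\mathbb{Z}$ (for some $\tau>0$), or $\rr$. The case $P=\rr$ would force $\ga$ to be constant and $Z(p)=0$, which is excluded, so only two cases remain. If $P=\{0\}$ and $\ga(t_1)=\ga(t_2)$, applying $\phi_{-t_1}$ gives $\phi_{t_2-t_1}(p)=p$, i.e.\ $t_2-t_1\in P=\{0\}$, whence $t_1=t_2$ and $\ga$ is injective. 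If $P=\tau\mathbb{Z}$, then $\ga$ is $\tau$-periodic and its image is a closed curve.

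I expect the only genuinely delicate point to be the completeness of the flow: one must make sure the integral curves of $Z$ are defined for all time, which is exactly where the completeness hypothesis on $\s$ and the fact that $Z$ has constant unit length (so that integral curves are arc-length parametrized and cannot escape a compact ball in finite time) are both used. It is worth remarking that the constant mean curvature hypothesis and the characteristic curve equation \eqref{eq:carcurvesph} play no direct role in this argument beyond guaranteeing that the characteristic curves are well-defined; the whole dichotomy is a consequence of $Z$ being a complete $\mc{C}^1$ unit vector field on $\s$.
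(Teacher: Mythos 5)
Your proof is correct and follows essentially the same route as the paper, whose entire argument is the one-line observation that characteristic curves are trajectories of the vector field $Z$ and are therefore injective or closed. You have simply supplied the standard details the paper leaves implicit (regularity and non-vanishing of $Z$ on $\s-\s_0=\s$, completeness of the flow via Hopf--Rinow and $|Z|\equiv 1$, and the closed-subgroup dichotomy for the period set), all of which are accurate.
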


\begin{proof} Since characteristic curves are the trajectories of the vector field $Z$, they are injectives or closed curves.

\end{proof}

\begin{remark} It is remarkable that Lemma \ref{char:iniettive}, together with \cite[Remark~6.8]{Ro}, implies that \cite[Theorem~6.7]{Ro} holds for all  homogeneous Sasakian sub-Riemannian 3-manifolds. We only have to reason as in the last part of the proof of Proposition \ref{necessariastabilita} below. 
\end{remark}

\begin{proposition}\label{necessariastabilita} Let $\Sigma$ be a $\mathcal{C}^2$ complete orientable surface with empty singular set immersed in a \phMM. We suppose that $g(R(T,Z)\nu_h,Z)-Z(g(\tau(Z),\nu_h))=0$ on $\s$ and the quantity $W-c_1g(\tau(Z),\nu_h)$ is constant along characteristic curves. We also assume that all characteristic curves in $\Sigma$ are either closed or non-closed. If $\Sigma$ is a stable minimal surface, then $W-c_1g(\tau(Z),\nu_h)\leq 0$ on $\s$. Moreover, if $W-c_1g(\tau(Z),\nu_h)=0$ then $\Sigma$ is a stable vertical surface.

\end{proposition}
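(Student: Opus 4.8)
The plan is to prove the inequality by contradiction, exhibiting an unstable variation whenever $W-c_1g(\tau(Z),\nu_h)>0$ at some $p_0$. Since this quantity is constant along characteristic curves, it equals a positive constant $k^2$ along the whole curve $\gamma_0$ through $p_0$, and stays positive on a tubular neighborhood of $\gamma_0$ by continuity. Proposition \ref{segnoL}, in the regime $k\neq 0$, then yields $\mathcal{L}(|N_h|)>0$ on $\gamma_0$ and on a (smaller) tubular neighborhood. The engine of the argument is Lemma \ref{stabopercanonical}, which expresses the index form of a non-singular test function as
\[
\mathcal{I}(u|N_h|,u|N_h|)=\int_\Sigma|N_h|\{Z(u)^2-\mathcal{L}(|N_h|)u^2\}\,d\Sigma ,
\]
so that it suffices to concentrate $u$ near $\gamma_0$ and let the positive potential $\mathcal{L}(|N_h|)$ dominate the gradient term $Z(u)^2$, violating $\mathcal{I}\geq 0$.

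I would then split according to the dichotomy provided by Lemma \ref{char:iniettive} and the hypothesis. If all characteristic curves are closed, $\gamma_0$ is compact, so $\mathcal{L}(|N_h|)$ is bounded below by a positive constant on a tubular neighborhood; taking $u$ to be a cut-off constant along each characteristic curve makes $Z(u)\equiv 0$, and $\mathcal{I}(u|N_h|,u|N_h|)=-\int_\Sigma|N_h|\mathcal{L}(|N_h|)u^2\,d\Sigma<0$ immediately. If all characteristic curves are non-closed, $\gamma_0$ is injective and, by completeness, parametrised by $s\in\rr$. Setting $\beta:=g(N,T)/|N_h|$ and using \eqref{varyang} in the form $\mathcal{L}(|N_h|)=k^2+2c_1Z(\beta)+c_1^2\beta^2$, an integration by parts along $\gamma_0$ together with the pointwise inequality $4c_1\beta\phi\phi'\leq c_1^2\beta^2\phi^2+4(\phi')^2$ gives the one-dimensional model estimate $\int((\phi')^2-\mathcal{L}(|N_h|)\phi^2)\,ds\leq 5\int(\phi')^2\,ds-k^2\int\phi^2\,ds$ for $u=\phi(s)$. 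A Lipschitz cut-off $\phi_L$ equal to $1$ on $[-L,L]$ keeps $\int(\phi_L')^2$ bounded while $k^2\int\phi_L^2\to\infty$, so the form becomes negative for $L$ large. To carry the transverse area weight rigorously I would run the same cut-off through Lemma \ref{indiceuv}, choosing $v=-g(V,T)$, the positive vertical component of the Jacobi-like field of Proposition \ref{jacobifields} (which never vanishes on $\gamma_0$ since $\Sigma_0=\emptyset$), in order to absorb the density factors into the term $\int|N_h|v^{-2}Z(u)^2$. This produces $\mathcal{I}<0$ and proves $W-c_1g(\tau(Z),\nu_h)\leq 0$.

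For the equality statement, assume $W-c_1g(\tau(Z),\nu_h)\equiv 0$, so $k\equiv 0$ and Proposition \ref{segnoL} gives $\mathcal{L}(|N_h|)\geq 0$ everywhere. Stability together with Lemma \ref{stabopercanonical} applied to functions constant along characteristic curves forces $\int_\Sigma|N_h|\mathcal{L}(|N_h|)u^2\,d\Sigma\leq 0$, hence $\mathcal{L}(|N_h|)\equiv 0$; by the characterisation in Proposition \ref{segnoL} this is equivalent to $g(N,T)\equiv 0$, i.e. $\Sigma$ is vertical, and vertical surfaces have empty singular set and are stable, giving the claim. In the non-closed subcase, where constant test functions are not admissible, I would argue that if $\Sigma$ were not vertical somewhere then the quadratic solution $g(V,T)(s)=as^2+bs+c$ of Proposition \ref{jacobifields}(iv) must have $a<0$ and $b^2-4ac<0$ (so as never to vanish), which forces $\mathcal{L}(|N_h|)>0$ at that point, and then rerun the cut-off/Lemma \ref{indiceuv} argument to contradict stability.

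The step I expect to be the main obstacle is the non-closed case. One must check that the flow-box coordinates $(s,\epsilon)$ stay valid along an arbitrarily long arc of the injective curve $\gamma_0$ (using non-closedness to keep the strip embedded), control the positive area density that appears when integrating by parts in $s$, and secure a uniform positive lower bound for $k^2$ over the transverse parameter; completeness of $\Sigma$ and the global closed/non-closed dichotomy are precisely what render these estimates uniform. The quadratic-versus-sinusoidal description of $g(V,T)$ from Proposition \ref{jacobifields}(iv) is what ultimately decouples the two regimes $k=0$ and $k\neq 0$.
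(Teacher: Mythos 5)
Your outline reproduces the paper's architecture — contradiction via Proposition \ref{segnoL}, the closed/non-closed dichotomy of Lemma \ref{char:iniettive}, the flow-box parametrisation by characteristic curves, and a long cut-off fed into the index form — and your closed case is correct and is exactly what the paper does. The gap is in the non-closed case, and it is concrete. Your estimate $\int((\phi')^2-\mc{L}(|N_h|)\phi^2)\,ds\le 5\int(\phi')^2\,ds-k^2\int\phi^2\,ds$ is derived against the unweighted measure $ds$, but the index form of Lemma \ref{stabopercanonical} integrates against $|N_h|\,d\s=|N_h|f_\eps\,ds\,d\eps$ with $f_\eps=g(V_\eps,S)$, and the weight $h:=|N_h|f_\eps=-g(V_\eps,T)$ satisfies $h'/h=c_1g(N,T)/|N_h|$ by \eqref{vt1s}. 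Integrating $-2c_1Z(\beta)\phi^2$ by parts against $h\,ds$ (where $\beta:=g(N,T)/|N_h|$) therefore produces an extra $+2c_1^2\beta^2\phi^2 h$, and after your Cauchy--Schwarz absorption the potential still carries an uncontrolled $+c_1^2\beta^2\phi^2$; since $\beta$ need not be bounded on a complete non-compact surface, negativity of the form is not forced. Your proposed repair via Lemma \ref{indiceuv} with $v=-g(V_\eps,T)$ does not close this either: the bracket $Z(v^{-1})^2-\tfrac12 Z(Z(v^{-4}))-\tfrac{c_1}{2}\tfrac{g(N,T)}{|N_h|}Z(v^{-4})$ does not simplify for that choice. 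The paper takes $v=|g(V_\eps,T)|^{1/2}$, the \emph{square root} of your $v$; only for this choice does the bracket collapse, via \eqref{vt1s}--\eqref{vt2s}, to $\bigl(\mc{L}(|N_h|)-(W-c_1g(\tau(Z),\nu_h))-c_1^2g(N,T)^2/|N_h|^2\bigr)/(2f_\eps^2|N_h|^2)$, which simultaneously turns the gradient term into the genuinely unweighted $\int Z(u)^2\,ds\,d\eps$ and places $c_1^2\beta^2$ with the favourable negative sign. That identity is the missing key step.

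The same omission affects your equality case on a surface with non-closed characteristic curves: there $k=0$, so the $-k^2\int\phi^2$ mechanism yields nothing, and invoking $\mc{L}(|N_h|)>0$ at a non-vertical point is not by itself enough; the instability has to come from the term $-c_1^2g(N,T)^2/|N_h|^2$ in the correctly transformed potential, which again is only available after the $v=|g(V_\eps,T)|^{1/2}$ substitution. Everything else in your plan (the role of Proposition \ref{jacobifields}(iv), the uniform positivity of $k^2$ in the transverse parameter on a compact $\eps$-interval, the cut-off construction) is sound and matches the paper.
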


\begin{proof} We need to prove that when exists $p\in\s$ such that $W-c_1g(\tau(Z),\nu_h)> 0$ in $p$ and $\mc{L}(|N_h|)\neq 0$ over the characteristic curve passing through $p$ in $\s$, then $\s$ is unstable, in virtue of Proposition \ref{segnoL}. We consider $p\in \s$ such that $\mc{L}(|N_h|)(p)>0$. We denote by $\gamma_0(s)$ the characteristic curve passing through $p$ and we denote by $\alpha(\eps)$ the integral curve of $S$ passing through $p$, parametrized by arc-length. As the surface is not singular $\s$ is foliated by characteristic curves, we denote by $\gamma_\eps (s)$ the characteristic curve passing through $\alpha(\eps)$ parametrized by arc-length.
We obtain a $\mc{C}^1$ map $F:I\times I'\rightarrow \s$ given by $F(\eps,s)=\gamma_{\eps}(s)$ which parametrizes a neighborhood of the characteristic curve $\gamma_0$ on $\s$, where $I'$ is an interval, compact or not, where live the parameter $s$ and $I=[-\eps_0,\eps_0]$ with $\eps_0\in\rr$ eventually small. 
By Proposition \ref{jacobifields} $V_\eps(s):=(\partial F/\partial \eps)(\eps,s)$ is a Jacobi-like vector field along $\gamma_\eps$ and the function $g(V_\eps,T)$ never vanishes since $\s_0=\emptyset$. Furthermore $V_\eps(0)=(S)_{\alpha(\eps)}$ implies that $g(V_\eps,T)<0$. We define the function $f_\eps:=g(V_\eps,S)$ and it is immediate that $g(V_\eps,T)=-f_\eps |N_h|$ and $g(V_\eps,\nu_h)=f_\eps g(N,T)$ where $|N_h|$ and $g(N,T)$ are evaluated along $\gamma_\eps$. The Riemannian area element of $\s$ with respect to the coordinates $(\eps,s)$ is given by 
\[
d\s=(|V_\eps|^2-g(V_\eps,\dot{\gamma_\eps}))^{1/2}=f_\eps\, ds\, d\eps.
\]
We define the function
\begin{equation}\label{def:v}
v(\eps,s):=|g(V_\eps,T)(s)|^{1/2}=(f_\eps |N_h|)^{1/2},
\end{equation}
which is positive, continuous on $I\times I'$ and $\mc{C}^{\infty}$ along characteristic curves, by Proposition \ref{jacobifields}. Denoting $v_\eps (s)=v(\eps,s)$ and denoting by $'$ the derivatives with respect to $s$, by \eqref{vt1s} and \eqref{vt2s} we get 
\[
(v_\eps^{-2})'=g(V_\eps,T)^{-2}g(V_\eps,T)'=-c_1\frac{g(N,T)}{f_\eps |N_h|^2},
\]
\[
(v_\eps^{-4})'=-2g(V_\eps,T)^{-3}g(V_\eps,T)'=-2c_1\frac{g(N,T)}{f_\eps^2|N_h|^3},
\]
\begin{equation*}
\begin{aligned}
(v_\eps^{-4})''&=6g(V_\eps,T)^{-4}(g(V_\eps,T)')^2-2g(V_\eps,T)^{-3}g(V_\eps,T)''=4c_1^2\frac{g(N,T)^2}{f_\eps^2|N_h|^4}-2c_1\frac{Z(|N_h|^{-1}g(N,T))}{f_\eps^2|N_h|^2},
\end{aligned}
\end{equation*}
where we have used $g(V_\eps,\nu_h)=-g(V_\eps,T)|N_h|^{-1}g(N,T)$, and consequently 
\begin{equation}
\begin{aligned}
((v_\eps^{-2})')^2-\frac{1}{2}(v_\eps^{-4})''-\frac{c_1}{2}\frac{g(N,T)}{|N_h|}(v_\eps^{-4})'&=c_1\frac{Z(|N_h|^{-1}g(N,T))}{f_\eps^2|N_h|^2}\\
&=\frac{\mc{L}(|N_h|)}{2f_\eps^2|N_h|^2}-\frac{W-c_1\IA+c_1^2|N_h|^{-2}g(N,T)^2}{2f_\eps^2|N_h|^2}.
\end{aligned}
\end{equation}

Now we consider a function $\phi:\rr\rightarrow\rr$ such that $\phi\in\mc{C}^{\infty}_0(I)$ and $\phi(0)>0$. Let $\rho$ a positive constant such that $|\phi'(\eps)|\leq \rho$ for any $\eps\in\rr$. We distinguish two cases. First we suppose that the family of curves $\gamma_\eps$ is defined in the whole real line for $\eps$ small enough. For any $n\in\nn$ we consider the function $u_n:I\times I'\rightarrow\rr$ defined by $u_n(\eps,s):=\phi(\eps)\phi(s/n)$, with $I'=\rr$. At this point we can conclude as in \cite[proof of Theorem~6.7]{Ro}.

In the second case we consider a family of closed curves $\gamma_\eps$ with eventually different length $l_\eps$.  We can parametrize all the curves  as $\gamma_\eps(t):I'\rightarrow\s$, with $t=s\, l_0 /l_\eps$ and $I'=[0,l_0]$. In this case we get
\begin{equation*}
\begin{aligned}
((v_\eps^{-2})')^2-\frac{1}{2}(v_\eps^{-4})''-\frac{c_1}{2}&\frac{g(N,T)}{|N_h|}(v_\eps^{-4})'=\frac{l_0}{l_\eps}c_1\frac{Z(|N_h|^{-1}g(N,T))}{f_\eps^2|N_h|^2}\\
&=\frac{l_o}{l_\eps}\frac{\mc{L}(|N_h|)}{2f_\eps^2|N_h|^2}-\frac{l_0}{l_\eps}\frac{W-c_1\IA+c_1^2|N_h|^{-2}g(N,T)^2}{2f_\eps^2|N_h|^2}.
\end{aligned}
\end{equation*}
Now it is sufficient reasoning as above changing the definition of the function $\phi_n(t):=\phi(0)$ to conclude as in \cite[proof of Theorem~6.7]{Ro}.

We observe that, chosen a point $p\in\s$, the curve $\gamma_0$ passing through $p$ can be closed (resp. non-closed) but the other characteristic curves $\gamma_\eps$ can be non-closed (reps. closed) even for $\eps_0$ small. In this case we can choose our initial point in another non-closed (reps. closed) curves. 

\end{proof}

\begin{remark} The proof of Proposition \ref{necessariastabilita} works under weaker assumptions, i.e. when the closed and non-closed characteristic curves of $\s$  are not dense the ones in the others.
\end{remark}

\begin{corollary}\label{su2} There are not complete stable minimal surfaces with empty singular set in the three-sphere group $SU(2)$.
\end{corollary}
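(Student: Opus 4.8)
The plan is to contradict Proposition \ref{necessariastabilita} by showing that the invariant $W-c_1g(\tau(Z),\nu_h)$ is \emph{strictly positive} at every point of any surface with empty singular set in $SU(2)$. First I would record the two facts about the ambient group: by Proposition \ref{classification}, $SU(2)$ is unimodular and satisfies $W>2|\tau|$, where $W$ is a (constant) structure invariant and, as discussed after Proposition \ref{classification} and in \eqref{Wunimodular}, $|\tau|$ is the operator norm of the symmetric trace-free endomorphism $\tau|_{\hhh}$.

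The heart of the argument is the pointwise bound $c_1g(\tau(Z),\nu_h)\leq 2|\tau|$. Since $\{\nu_h,Z\}$ is an orthonormal basis of $\hhh$ with $Z=J(\nu_h)$, the quantity $g(\tau(Z),\nu_h)$ is exactly the off-diagonal entry of the torsion matrix in the frame $\{\nu_h,Z,T\}$; writing $\nu_h=a_1X+a_2Y$ one reads off from \eqref{torsionframe} that $|g(\tau(Z),\nu_h)|=|\tau|\,|a_1^2-a_2^2|\leq|\tau|$, using $a_1^2+a_2^2=1$. As $|c_1|=2$, this gives $|c_1g(\tau(Z),\nu_h)|\leq 2|\tau|$, and therefore $W-c_1g(\tau(Z),\nu_h)\geq W-2|\tau|>0$ everywhere on $\Sigma$.

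Next I would verify the remaining hypotheses of Proposition \ref{necessariastabilita} for a complete minimal surface $\Sigma$ with empty singular set. Minimality and Lemma \ref{R-e1tor} give $g(R(T,Z)\nu_h,Z)-Z(g(\tau(Z),\nu_h))=0$. Moreover $W$ is constant on the Lie group, while \eqref{dertor} with $H=0$ yields $Z(g(\tau(Z),\nu_h))=0$, so $W-c_1g(\tau(Z),\nu_h)$ is constant along every characteristic curve. Hence Proposition \ref{necessariastabilita} applies, and if $\Sigma$ were stable it would force $W-c_1g(\tau(Z),\nu_h)\leq 0$ on $\Sigma$, contradicting the strict positivity just obtained. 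Therefore $\Sigma$ must be unstable, and no complete stable minimal surface with empty singular set exists in $SU(2)$.

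The one delicate point, and the main obstacle, is the torsion estimate $|g(\tau(Z),\nu_h)|\leq|\tau|$: one must use that $|\tau|$ in Proposition \ref{classification} is the operator (spectral) norm of $\tau|_{\hhh}$ rather than, say, a Frobenius norm, so that the off-diagonal coefficient appearing in \eqref{torsionframe} is genuinely dominated by it. The rest is a routine assembly of results already established; orientability, required to invoke Proposition \ref{necessariastabilita}, is built into the setting, since a global horizontal unit normal $\nu_h$ (hence a global $N$) is presupposed for the normal variations used there.
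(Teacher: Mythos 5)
Your proof is correct and follows the same route as the paper, whose own argument simply invokes Proposition \ref{classification} to conclude $W-c_1g(\tau(Z),\nu_h)>0$ on $\Sigma$ and then applies Proposition \ref{necessariastabilita}. You merely make explicit what the paper leaves implicit — the frame-independent bound $|g(\tau(Z),\nu_h)|\leq|\tau|$ read off from \eqref{torsionframe} together with \eqref{Wunimodular}, and the verification of the hypotheses of Proposition \ref{necessariastabilita} via Lemma \ref{R-e1tor} and \eqref{dertor} — so no further changes are needed.
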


\begin{proof} By Proposition \ref{classification} in $SU(2)$ we have $W-2g(\tau(Z),\nu_h)>0$ and we get the statement using Theorem \ref{necessariastabilita}.
\end{proof}

\begin{remark} In \cite[Corollary~6.9(ii)]{Ro} the author shows that  complete stable minimal surfaces with empty singular set do not exist in the pseudo-hermitian 3-sphere, which is the only Sasakian structure of $SU(2)$.
\end{remark}

\section{Classification of complete, stable, minimal surfaces in the roto-traslation group $\mc{RT}$.}

We consider the group of rigid motions of the Euclidean plane. The underlying manifold is $\mathbb{R}^2\times \mathbb{S}^1$ where the horizontal distribution $\hhh$ is generated by the vector fields
\begin{equation*}
X=\frac{\partial}{\partial \alpha} \,\,\text{ and }\,\, Y=cos (\alpha)\, \frac{\partial}{\partial x}+sin( \alpha)\, \frac{\partial}{\partial y},
\end{equation*}
the Reeb vector field is 
\begin{equation*}
T=sin (\alpha)\, \frac{\partial}{\partial x}-cos( \alpha)\, \frac{\partial}{\partial y}
\end{equation*}
and the contact form is $\omega=sin (\alpha) \,dx - cos( \alpha)\, dy$, \cite{Ca-Da-Pa-Ty}. Furthermore we have the following Lie brackets 
\[
[X,Y]=-T,\,[X,T]=Y,\, [Y,T]=0
\] 
which imply $W=1/2$ and that the matrix of the pseudo-hermitian torsion with respect to the basis  $\{X,Y,T\}$ is
\begin{displaymath}
 \left( \begin{array}{ccc} 
0 & \frac{1}{2}& 0 \\
\frac{1}{2} & 0& 0\\
0 & 0 & 0 \end{array} \right)
\end{displaymath}


By \cite[Theorem~1.2]{Hl-Pa2} a characteristic curve $\gamma(t)=(x(t),y(t),\alpha(t))$ of curvature $\lambda=0$ with initial conditions $\gamma(0)=(x_0,y_0, \alpha_0)$ and $\dot{\gamma(0)}=(\dot{x_0},\dot{y_0},\dot{\alpha_0})$ in $\mc{RT}$  is of the form
\begin{equation}\label{RTcar1}
\ga(t)=(x_{0}+R_{0} \cos (\alpha_{0}) t,\, y_{0}+R_{0} \sin (\alpha_{0}) t,\, \alpha_{0})
\end{equation}
when $\theta_0=0$ or 
\begin{equation}\label{RTcar2}
\ga(t)=(x_{0}+(R_{0}/\dot{\alpha}_{0})(\sin(\alpha(t))-\sin(\alpha_{0}) ),\, y_{0}+(R_{0}/\dot{\alpha}_{0})(\cos(\alpha_{0})-\cos(\alpha(t))),\, \alpha_{0}+\dot{\alpha}_{0} t)
\end{equation}
otherwise,  where $R_{0}=\sqrt{\dot{x_0}^2+\dot{y_0}^2}$. We underline that the first family of curves is composed by sub-Riemannian geodesic but the second one only when $R_{0}=0$.

We investigate the equation of a minimal surface $\s$ defined as the zero level set of a function $u(\alpha,x,y)$. We consider the horizontal unit normal   and the characteristic field
\[\nu_{H}=\frac{(u_{\alpha}X+(\cos(\alpha) u_{x}+\sin(\alpha) u_{y})Y)}{(u_{\alpha}^2+\cos^2(\alpha)u_{x}^2+\sin^2(\alpha)u_{y}^2)^{1/2}} ,\, Z=\frac{(\cos(\alpha) u_{x}+\sin(\alpha) u_{y})X-u_{\alpha}Y}{(u_{\alpha}^2+\cos^2(\alpha)u_{x}^2+\sin^2(\alpha)u_{y}^2)^{1/2}}
\]
respectively. By a direct computation we get the minimal surface equation
\begin{equation}\label{RTminimal}
\begin{aligned}
&u_{\alpha}^2(\cos^2(\alpha)u_{xx}+2\cos(\alpha)\sin(\alpha)u_{xy}+\sin^2(\alpha)u_{yy})+(\cos(\alpha)u_{x}+\sin(\alpha)u_{y})^2 u_{\alpha \alpha}\\
&-u_{\alpha}(\cos(\alpha)u_{x}+\sin(\alpha)u_{y})(2\cos(\alpha)u_{\alpha x}+2\sin(\alpha)u_{\alpha y}-\sin(\alpha)u_{x}+\cos(\alpha)u_{y})=0.
\end{aligned}
\end{equation}

\begin{remark}\label{ossWrt} In $\mc{RT}$ we can express 
\[
g(\tau(Z),Z)=g(Z,X)g(Z,Y)=-g(\nu_h,X)g(\nu_h,Y)
\]
and
\[
g(\tau(Z),\nu_h)=1/2-g(\nu_h,Y)^2
\]
which imply $W-g(\tau(Z),\nu_h)=g(\nu_h,Y)^2=g(Z,X)^2$.
\end{remark}

\begin{corollary} Let $\s$ be a  $\mc{C}^2$ stable, oriented, complete, immersed  minimal surface in $\mc{RT}$ with empty singular set. Then $\s$ is a vertical plane  of the form $\s_{a}=\{(x,y,\alpha)\in\mc{RT}:\alpha=a\in\mc{S}^1\}$ .
\end{corollary}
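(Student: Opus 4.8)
The plan is to verify that, for a minimal $\s$, the pair $(\mc{RT},\s)$ satisfies every hypothesis of Proposition \ref{necessariastabilita}, and then to read off the conclusion. First I would record that $\mc{RT}$ is unimodular: from $[X,Y]=-T$, $[X,T]=Y$, $[Y,T]=0$ one sees that $\mathrm{ad}_X,\mathrm{ad}_Y,\mathrm{ad}_T$ are all traceless, and that here $c_1=-g([X,Y],T)=1$. Since $\mc{RT}$ is unimodular and $\s$ is minimal ($H=0$), Lemma \ref{R-e1tor} gives $g(R(T,Z)\nu_h,Z)-Z(g(\tau(Z),\nu_h))=2Hg(\tau(Z),Z)=0$ on $\s$, which is exactly the first hypothesis of Proposition \ref{necessariastabilita}.

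Next I would check that $W-c_1g(\tau(Z),\nu_h)$ is constant along characteristic curves. By Remark \ref{ossWrt} and $c_1=1$ this quantity equals $g(Z,X)^2$, so it suffices to show $Z(g(Z,X))=0$. Writing $Z(g(Z,X))=g(\n_Z Z,X)+g(Z,\n_Z X)$, the first term vanishes because minimality forces $\n_Z Z=0$ by \eqref{eq:carcurvesph} (the curvature is $\lambda=H/c_1=0$); the second term vanishes because, expanding $Z=g(Z,X)X+g(Z,Y)Y$, one has $\n_Z X=g(Z,X)\n_X X+g(Z,Y)\n_Y X=0$ from the vanishing $\n_X X=\n_Y X=0$ of the horizontal covariant derivatives in $\mc{RT}$ (this is \eqref{Gchistoffel}, and also follows directly from the defining properties \eqref{nablaT}, \eqref{nablaJ} and \eqref{def:torsion} of the pseudo-hermitian connection). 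Hence $g(Z,X)$, and therefore $W-c_1g(\tau(Z),\nu_h)$, is constant along characteristic curves. Finally, Lemma \ref{char:iniettive} shows that every characteristic curve of $\s$ is injective or closed, which supplies the remaining dichotomy hypothesis.

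With all hypotheses in hand I would apply Proposition \ref{necessariastabilita}: stability of the minimal surface $\s$ forces $W-c_1g(\tau(Z),\nu_h)\leq 0$ on $\s$. On the other hand Remark \ref{ossWrt} gives $W-c_1g(\tau(Z),\nu_h)=g(Z,X)^2\geq 0$, so necessarily $g(Z,X)\equiv 0$ and $W-c_1g(\tau(Z),\nu_h)\equiv 0$ on $\s$. The ``moreover'' part of Proposition \ref{necessariastabilita} then yields that $\s$ is a vertical surface, i.e. $g(N,T)\equiv 0$.

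It remains to identify $\s$. From $g(Z,X)=0$ and $Z=J(\nu_h)$ one gets $\nu_h=\pm X$, and verticality gives $N=\nu_h=\pm X=\pm\,\partial/\partial\alpha$. Thus $d\alpha$ annihilates $T_p\s$ at every point, so $\alpha$ is locally constant on $\s$; by connectedness $\alpha\equiv a$ for some $a\in\sph^1$, and completeness then identifies $\s$ with the vertical plane $\s_a=\{(x,y,\alpha)\in\mc{RT}:\alpha=a\}$. The hard part will be the middle step: establishing rigorously that $g(Z,X)$ is constant along characteristic curves (so that Proposition \ref{necessariastabilita} genuinely applies), and then passing from the pointwise identity $N=\pm\,\partial/\partial\alpha$ to the global statement that $\s$ is a single level set $\{\alpha=a\}$ rather than merely a locally constrained surface.
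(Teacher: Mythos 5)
Your overall route is exactly the one the paper intends: the corollary is stated there without proof, as an immediate consequence of Proposition \ref{necessariastabilita} together with Remark \ref{ossWrt}, and your verification of the hypotheses (unimodularity of $\mc{RT}$ plus Lemma \ref{R-e1tor} for the curvature--torsion identity, $Z(g(Z,X))=0$ via \eqref{eq:carcurvesph} and \eqref{Gchistoffel} for the constancy of $W-c_1g(\tau(Z),\nu_h)=g(Z,X)^2$ along characteristic curves, and the final identification $N=\pm\,\partial/\partial\alpha\Rightarrow\s=\s_a$) is correct and in fact more careful than anything written in the paper.

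The one step that does not hold up as stated is the claim that Lemma \ref{char:iniettive} ``supplies the remaining dichotomy hypothesis.'' That lemma only says that each individual characteristic curve is injective or closed, which is not what Proposition \ref{necessariastabilita} needs: its proof requires that, near the chosen point, the surface is foliated by characteristic curves all of the same type (all closed or all non-closed), so that a single parametrization $F(\eps,s)$ and a single family of test functions can be used. A priori a minimal surface in $\mc{RT}$ could carry both kinds of characteristic curves, so the dichotomy is a genuine extra hypothesis, not a tautology. The gap is easily repaired in $\mc{RT}$: since $g(Z,X)^2$ is constant along each characteristic curve and the curves of curvature zero are given explicitly by \eqref{RTcar1}--\eqref{RTcar2}, a characteristic curve is a (non-closed) straight line precisely when $g(Z,X)=0$ on it and is closed precisely when $g(Z,X)\neq0$ on it. Hence at any point $p$ where $W-c_1g(\tau(Z),\nu_h)=g(Z,X)^2>0$, continuity gives a whole neighborhood on which $g(Z,X)\neq0$, so every characteristic curve meeting that neighborhood is closed, and the instability argument of Proposition \ref{necessariastabilita} applies verbatim there. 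You should replace the appeal to Lemma \ref{char:iniettive} by this local observation; with that change the proof is complete.
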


We note that there exists another family of vertical surfaces composed of the left-handed helicoids  $\s_{b}=\{(x,y,\alpha)\in\mc{RT}: \cos(b\,\alpha)x+\sin(b\,\alpha)y=0, b\in\mc{S}^1\}$, that are unstable minimal surfaces. In fact the horizontal normal of $\s_b$ is 
\[
\nu_h=\frac{(-\sin(\alpha)x+\cos(\alpha)y)X+Y}{(1+(-\sin(\alpha)x+\cos(\alpha)y)^2)^{1/2}}
\]
which implies $W-g(\tau(Z),\nu_h)>0$ outside the line $\{x=y=0\}$.

\begin{lemma}\label{stazionariopuntosingolare} In $\mc{RT}$ there do not exist minimal surfaces with isolated singular points. 
\end{lemma}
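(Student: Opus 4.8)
The plan is to argue by contradiction, reducing everything to the explicit curvature--zero characteristic curves of $\mc{RT}$ furnished by \eqref{RTcar1}--\eqref{RTcar2}. Suppose $\s$ is a $\mc{C}^2$ minimal surface carrying an isolated singular point $p$. Since $\mc{RT}$ is a Lie group acting on itself by contact isometries, and such isometries preserve both minimality and the singular set, I would first normalize $p=(0,0,0)$ by a left translation. As $\s$ is minimal we have $H=0$, so Theorem \ref{singularsetth}(i) applies with $\lambda=0$: there is $r>0$ for which $D_r(p)=\{\gamma^{0}_{p,v}(s): v\in T_p\s,\ |v|=1,\ s\in[0,r)\}$ is a neighborhood of $p$ in $\s$, assembled from the curvature--zero characteristic curves leaving $p$ in every horizontal direction $v\in\hhh_p$.

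Next I would make the fan explicit. Writing the unit initial velocity as $v=\cos\phi\,X_p+\sin\phi\,Y_p$, so that $\dot\alpha_0=\cos\phi$ and $R_0=\sin\phi$, the direction $v=\pm Y_p$ ($\cos\phi=0$) yields the horizontal segments of \eqref{RTcar1} contained in the slice $\{\alpha=0\}$, the direction $v=\pm X_p$ ($R_0=0$) yields the whole vertical fibre $\{(0,0,\alpha)\}$, and every intermediate direction yields a helical arc \eqref{RTcar2}. In particular the vertical fibre lies in $\s$ and is itself a characteristic curve through $p$. Reparametrizing the intermediate curves by $\alpha$ gives $x=\tan\phi\,\sin\alpha$ and $y=\tan\phi\,(1-\cos\alpha)$, so that in each slice $\{\alpha=\text{const}\}$ the trace of $D_r(p)$ is a segment of the line through the origin at angle $\alpha/2$; this exhibits $D_r(p)$ as a piece of a helicoidal surface. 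Along these curves I can then compute $\nu_h$, $Z$ and $g(N,T)$ from the formulas preceding \eqref{RTminimal}, and use Remark \ref{ossWrt} to record that $W-g(\tau(Z),\nu_h)=g(Z,X)^2$ governs the rotation of $\nu_h$ along each characteristic curve.

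The decisive step is to turn this helicoidal model into a contradiction with the \emph{isolation} of $p$. I would track the horizontal Gauss map $\nu_h$ (equivalently $g(N,T)$) along the vertical fibre and along the family of approaching helical curves, and show, using the orthogonality of characteristic and singular curves (Corollary \ref{charmeetsingular}) together with the bookkeeping of Lemma \ref{conti}, that the singular locus cannot be confined to $p$ alone: the geometry forces $\s_0$ to contain a $\mc{C}^1$ curve through $p$, so that $p$ falls under case (ii) rather than case (i) of Theorem \ref{singularsetth}, contradicting isolation. This connects the statement to the subsequent classification of area-stationary surfaces with a singular curve as right-handed helicoids.

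I expect this last step to be the main obstacle, and it is genuinely delicate. The curvature--zero characteristic curves of $\mc{RT}$ do fit together into a smooth helicoidal surface, so no contradiction can come from the curves failing to close up; it must instead be extracted from the fine behavior of $g(N,T)$ and of the torsion term $g(\tau(Z),\nu_h)$ near $p$. The awkward feature is that the geodesic directions $v=\pm X_p,\pm Y_p$ and the non-geodesic intermediate directions behave differently, so matching the one-sided limits $\nu_h^{\pm}$ at $p$ along antipodal directions requires careful control of how $g(Z,X)^2$ varies; isolating the precise inconsistency this produces, rather than merely recovering the admissible helicoidal picture, is where the real work of the proof lies.
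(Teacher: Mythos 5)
Your reduction is the same as the paper's: normalize $p$ to the origin, invoke Theorem \ref{singularsetth}(i) with $\lambda=0$, and realize $D_r(p)$ explicitly as the fan of curvature-zero characteristic curves \eqref{RTcar1}--\eqref{RTcar2}. Up to that point your computation is correct, and in fact more careful than the paper's: the slice $\{\alpha=\mathrm{const}\}$ of $D_r(p)$ is the line through the origin at angle $\alpha/2$, so $D_r(p)$ is a piece of the half-speed helicoid $\{x\sin(\alpha/2)-y\cos(\alpha/2)=0\}$. The genuine gap is that your ``decisive step'' is never carried out: you announce that tracking $\nu_h$ and $g(N,T)$ near $p$ will force a singular curve through $p$, but you do not produce that argument, and you yourself flag it as the main obstacle. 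A proof that stops where the real work begins is not a proof.

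Worse, the step you postpone cannot be supplied, because the contradiction you are hunting for is not there. On the half-speed helicoid the tangent plane at $(0,0,\alpha)$ is spanned by $\partial/\partial\alpha$ and $\cos(\alpha/2)\,\partial/\partial x+\sin(\alpha/2)\,\partial/\partial y$, while $\hhh_{(0,0,\alpha)}$ is spanned by $\partial/\partial\alpha$ and $\cos(\alpha)\,\partial/\partial x+\sin(\alpha)\,\partial/\partial y$; these coincide only for $\alpha\equiv 0$, so the singular set of this surface is exactly the origin. Moreover the surface is smooth and minimal: its characteristic curves are the integral curves of the constant fields $aX+bY$, for which $\n_{\dot{\ga}}\dot{\ga}=0$, hence $H=0$. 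So the fan closes up into a perfectly admissible local (indeed global, as a complete helicoidal Möbius band whose oriented double cover is an immersed minimal surface) model with an isolated singular point, exactly as the plane $\{t=0\}$ does in $\hh^1$. The paper's own one-line proof rests on identifying the swept surface with the helicoid $\s_c$ ``which contains a singular line'' $\{x=y=0\}$; but that helicoid turns at rate $\alpha$, not $\alpha/2$, and is swept by characteristic curves emanating from the whole singular circle, not from a single point. Your own computation of the angle $\alpha/2$ therefore undercuts both your plan and the paper's argument: as it stands, neither establishes the lemma, and unless a genuinely different mechanism is found (for instance a first-variation or stationarity obstruction concentrated at the isolated singular point), the half-speed helicoid stands as a counterexample to the statement as written.
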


\begin{proof} We can suppose that the singular point is the origin. Then $T_0\s =span\{\partial_x,\partial_\alpha\}$.  The unique way to construct a minimal surface is to to put together all characteristic curves starting from $0$, in the directions of $T_0\s$ with curvature $\lambda=0$, Theorem \ref{singularsetth}. But in this way we construct a right-handed helicoid denoted $\s_c$ below, which contains a singular line.
\end{proof}

\begin{lemma}\label{stazionariaconcurva} Let $\s$ be a complete area-stationary surface of class $\mc{C}^2$ in $\mc{RT}$ which contains a singular curve $\Ga$. Then $\s$ is a right-handed helicoid $\s_c$ or a plane $\s_{a,b,c}$ defined below.
\end{lemma}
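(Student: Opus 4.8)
The plan is to reconstruct $\s$ from its singular curve $\Ga$ together with the characteristic curves crossing it. Since $\s$ is area-stationary and minimal, Corollary \ref{charmeetsingular} shows that the characteristic curves meet $\Ga$ orthogonally, and by Theorem \ref{singularsetth} they have curvature $\lambda=0$, so in $\mc{RT}$ they are given explicitly by \eqref{RTcar1}--\eqref{RTcar2}. I would parametrize $\Ga$ by arc length; as $\Ga\subset\s_0$ its tangent is horizontal, so $\Ga'(\eps)=a(\eps)X+b(\eps)Y$ with $a^2+b^2=1$, and orthogonality forces the characteristic direction at $\Ga(\eps)$ to be $Z=\pm(b\,X-a\,Y)$. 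A direct computation with \eqref{RTcar2} shows that a $\lambda=0$ characteristic curve keeps the constant velocity $bX-aY$ in the frame $\{X,Y,T\}$ along its whole length; this rigidity of $\mc{RT}$ is what makes the classification tractable.

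Next I would isolate the two extremal configurations. If $b\equiv 0$, then $\Ga'=aX=\pm\partial_\alpha$, so $\Ga$ is a vertical circle $\{x=x_0,\,y=y_0\}$ and the characteristic curves, having $\dot\alpha_0=0$, are the horizontal lines of \eqref{RTcar1} lying in the planes $\{\alpha=\text{const}\}$; as $\alpha$ runs over $\sph^1$ these rotating lines sweep out the right-handed helicoid $\s_c$. If instead $a\equiv 0$, then $\Ga'=bY$ at a fixed height $\alpha=\alpha^*$, so $\Ga$ is a Euclidean line in the plane $\{\alpha=\alpha^*\}$ and the characteristic curves have $R_0=0$, hence by \eqref{RTcar2} are the vertical segments $\{x=x_0,\,y=y_0\}$; their union is the vertical plane $\s_{a,b,c}$.

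The heart of the argument, and the step I expect to be hardest, is to exclude the mixed case $a(p_0)\,b(p_0)\neq0$. For this I would work in the local parametrization $F(\eps,s)=\gamma_\eps(s)$ by the characteristic curves emanating from $\Ga$. Since $\partial_s F=Z$ is horizontal, the surface is singular exactly where $\omega(\partial_\eps F)=0$. Computing this for the model with $a,b$ constant gives $\omega(\partial_\eps F)=b^{-1}\sin(bs)$, which vanishes again at $s_1=\pi/b$; there a second singular curve appears, and evaluating the inner product of $\partial_s F=bX-aY$ with the tangent $\partial_\eps F$ to this new singular curve yields $g(\partial_s F,\partial_\eps F)=2a/b\neq0$. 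This contradicts the orthogonality required by Corollary \ref{charmeetsingular}, so the mixed configuration is impossible; because $\s$ is complete its characteristic curves are complete, so $\gamma_{p_0}$ genuinely reaches $s_1$, and the same contradiction persists for variable $a,b$ by a leading-order/continuity argument near $p_0$. Hence $ab\equiv0$ on $\Ga$, and since $(a,b)$ is continuous with $a^2+b^2=1$ it must be constant, landing in one of the two cases above.

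Finally, I would use completeness to pass from the local to the global picture: the local helicoidal (resp. planar) piece extends uniquely along the complete characteristic curves and along the closed (resp. complete) singular curve, so $\s$ coincides globally with $\s_c$ or with $\s_{a,b,c}$. Combined with Lemma \ref{stazionariopuntosingolare}, which discards the isolated-singular-point case, this yields the stated dichotomy.
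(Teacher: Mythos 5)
Your argument is correct and is essentially the paper's own proof: the paper likewise parametrizes $\s$ by the characteristic curves leaving $\Gamma$ orthogonally, shows via the Jacobi-like field of Proposition \ref{jacobifields} that $g(V_\eps,T)(s)=-k_\eps^{-1/2}\sin(k_\eps^{1/2}\,s)$ with $k_\eps=g(\dot\ga_\eps,X)^2=b^2$, locates the second singular curve at $s_\eps=\pi/\sqrt{k_\eps}$, and excludes $0<k_\eps<1$ because $\int_0^{s_\eps}g(V_\eps,T)\,g(\tau(\dot\ga_\eps),\dot\ga_\eps)\,ds=\pm 2a/b\neq 0$ contradicts orthogonality there --- precisely your $2a/b$ obstruction --- leaving only $k=0$ (helicoid) and $k=1$ (plane). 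The one cosmetic difference is that your ``leading-order/continuity'' step is not needed: the Jacobi equation and its initial data along a fixed $\ga_{\eps}$ involve only the values $a(\eps),b(\eps)$ on that single curve (along which they are genuinely constant), so the computation is already exact for variable $a,b$, which is how the paper dispenses with any approximation.
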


\begin{proof} We consider a singular curve $\Ga(\eps)$ in $\s$. Then as $\s$ is foliated by characteristic curves we can parametrize it by the map $F(\eps,s)=\ga_\eps(s)$, where $\ga_\eps(s)$ is the characteristic curves with initial data $\ga_\eps(0)=\Ga(\eps)$ and $\dot{\ga}_\eps(0)=J(\dot{\Ga}(\eps))$. We define the function $V_\eps(s):=(\partial F/\partial \eps)(s,\eps)$ that is a smooth Jacobi-like vector field along $\ga_\eps(s)$. The vertical component of $V_\eps$ satisfies the ordinary differential equation
\[
g(V_\eps,T)'''+ k_\eps \,g(V_\eps,T)'=0,
\]
with $k_\eps=g(\dot{\ga}_\eps(s),X)^2$ that is constant along $\ga_\eps(s)$. We suppose that a characteristic curve $\ga_{\tilde{\eps}}(s)$ is not a sub-Riemannian geodesic, it means that $0<k_{\tilde{\eps}}<1$. As $g(V_\eps,T)'(0)=0$ and $g(V_\eps,T)''(0)=0$ by \eqref{vt1}, \eqref{vt2} and the fact that $\Ga$ is a singular curve, we get 
\begin{equation}\label{vtRT}
g(V_\eps,T)(s)=-\frac{1}{\sqrt{k_\eps}}\sin(\sqrt{k_\eps}\, s)
\end{equation} 
and we find another singular point at distance $\pi/\sqrt{k_\eps}$. The singular point is contained in a singular curve $\Ga_1$ composed of points of the type $\ga_\eps(s_\eps)$, with $s_\eps=\pi/\sqrt{k_\eps}$. $\s$ area-stationary implies $g(\dot{\Ga}_1(\eps),J(\dot{\ga}_\eps(s_\eps)))=0$. Now we prove that $g(V_\eps,\dot{\ga}_\eps)(s)$ is constant along $\ga_\eps$.  It is zero in the initial point and we suppose it is increasing or decreasing. By point (ii) in Proposition \ref{jacobifields} we get that it has a maximum or a minimum in $s_\eps$ and so $V_\eps(s_\eps)$ and $\dot{\ga}_\eps(s_\eps)$ are co-linear. This is impossible and we have proved $V_\eps(s_\eps)=\dot{\Ga}_1(\eps)$. Finally integrating $g(V_\eps,\dot{\ga}_\eps)(s)$ along $\ga_\eps$ by point (ii) in Proposition  \ref{jacobifields} we get 
\[
0=\m_{0}^{s_\eps}g(V_\eps,\dot{\ga}_\eps)'(s)ds=-\m_{0}^{s_\eps}g(V_\eps,T)(s)g(\tau(\dot{\ga}_\eps),\dot{\ga}_\eps)(s)ds,
\]
that is impossible since $g(V_\eps,T)>0$ on $(0,s_\eps)$ and $g(\tau(\dot{\ga}_\eps),\dot{\ga}_\eps)=g(\dot{\ga}_\eps,X)\sqrt{1-g(\dot{\ga}_\eps,X)^2}$ is a constant different from zero. We have proved that each $\ga_\eps$ is a sub-Riemannian geodesic and $k=k_\eps$ is equal to 0 or 1. When $k=0$ we get the surface a right-hand helicoid and when $k=1$ we get a plane.

\end{proof}

\begin{remark} In \cite[Example~2.1]{Na} the author gives examples of minimal surfaces of equations $ax+b\sin(\alpha)+c=0$ and $x-y+c(\sin(\alpha+\cos(\alpha)))+d=0$. Also the surfaces $ay-b\cos(\alpha)+c=0$ and $x+y+c(\sin(\alpha+\cos(\alpha)))+d=0$ are minimal surfaces with a similar property, in fact they satisfy $g(\tau(Z),\nu_h)=0$ . We remark that all these examples are not area-stationary. 

For example in the surface  described by $x+\sin(\alpha)=0$ we have $Z=(-\cos(\alpha)X+\cos(\alpha)Y/(2|\cos(\alpha)|)$ that is not orthogonal to the singular curves $\Ga_1=\{(-1,y,\pi/2)\in\mc{RT}:y\in\rr\}$ and $\Ga_2=\{(1,y,3\pi/2)\in\mc{RT}:y\in\rr\}$.
\end{remark}

\begin{lemma}\label{curvamin} Let $\s$ be a surface defined by a function $u(x,y)=0$, with $u:\mathbb{R}^2\rightarrow \mathbb{R}$ of class $\mathcal{C}^2$ and $(u_{x},u_{y})\neq (0,0)$. Then $\s$ is a minimal surface that  is area-stationary if and only if it is a plane $\s_{a,b,c}=\{(x,y,\alpha)\in\mc{RT}:ax+by+c=0, \,a,b\in\rr,\, c\in\mc{S}^1\}$. 
\end{lemma}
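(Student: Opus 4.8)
The plan is to exploit that $u$ is independent of $\alpha$. First I would note that $u_\alpha=0$, so the horizontal unit normal reduces to $\nu_h=\pm Y$ and the characteristic field to $Z=\pm X=\pm\partial/\partial\alpha$ wherever it is defined. Substituting into the minimal surface equation \eqref{RTminimal}, every term carries a factor of $u_\alpha$, $u_{\alpha\alpha}$, $u_{\alpha x}$ or $u_{\alpha y}$, all of which vanish; hence \eqref{RTminimal} holds identically and $\s$ is minimal for every admissible $u$. The characteristic curves are therefore exactly the circular fibers $\{(x_0,y_0,\alpha):\alpha\in\sph^1\}$ over the points of the planar curve $C:=\{u=0\}$, since these are the integral curves of $Z=\pm\partial_\alpha$.

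Next I would locate the singular set. A point is singular precisely when $\nu_h$ degenerates, i.e. when $\cos(\alpha)u_x+\sin(\alpha)u_y=0$, which says $(\cos\alpha,\sin\alpha)\perp\nabla u$. Parametrizing $C$ by arc length $\eps\mapsto(x(\eps),y(\eps))$, its unit tangent $(x'(\eps),y'(\eps))$ is also orthogonal to $\nabla u$, so the singular values of $\alpha$ are exactly those with $(\cos\alpha,\sin\alpha)=\pm(x'(\eps),y'(\eps))$. Thus along each fiber there are (generically) two singular points, and choosing a smooth branch $\alpha(\eps)$ with $\cos\alpha(\eps)=x'(\eps)$, $\sin\alpha(\eps)=y'(\eps)$ produces a $\mc{C}^1$ singular curve $\Gamma(\eps)=(x(\eps),y(\eps),\alpha(\eps))$; here $\alpha(\eps)$ is nothing but the tangent angle of $C$, so $\alpha'(\eps)$ equals the signed curvature of $C$.

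The decisive step is Corollary \ref{charmeetsingular}: the minimal surface $\s$ is area-stationary if and only if its characteristic curves meet the singular curves orthogonally for $g$. I would write $\dot\Gamma$ in the orthonormal frame $\{X,Y,T\}$ using $\partial_x=\cos(\alpha)Y+\sin(\alpha)T$, $\partial_y=\sin(\alpha)Y-\cos(\alpha)T$ and $\partial_\alpha=X$. Substituting $\cos\alpha=x'$, $\sin\alpha=y'$ collapses the computation to $\dot\Gamma=Y+\alpha'X$, whence $g(\dot\Gamma,Z)=\pm\alpha'$. Orthogonality therefore holds if and only if $\alpha'\equiv0$, i.e. the tangent angle of $C$ is constant, i.e. $C$ has vanishing curvature and is a straight line $ax+by+c=0$. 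This identifies the area-stationary members of the family precisely as the planes $\s_{a,b,c}$; conversely for a line $\alpha'\equiv0$ gives orthogonality and hence area-stationarity. The second branch $\alpha\mapsto\alpha+\pi$ yields $\dot\Gamma=-Y+\alpha'X$ and the same condition, so it contributes nothing new.

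The main obstacle I anticipate is purely bookkeeping: correctly expressing the singular-curve tangent in the $\{X,Y,T\}$ frame and recognizing that $\alpha(\eps)$ is exactly the tangent angle, so that orthogonality translates into vanishing planar curvature. One should also confirm the surface is genuinely $\mc{C}^2$ with $\mc{C}^1$ singular curves (guaranteed since $u\in\mc{C}^2$ and $\nabla u\neq0$, via Theorem \ref{singularsetth}), and note that this conclusion is consistent with Lemma \ref{stazionariaconcurva}: a surface of the form $\{u(x,y)=0\}$ cannot be a right-handed helicoid $\s_c$, so the only area-stationary possibility left there is a plane.
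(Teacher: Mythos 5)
Your argument is correct, and for the half of the lemma that actually requires work it takes a genuinely different route from the paper. The minimality step is identical: both you and the paper observe that every term of \eqref{RTminimal} carries a derivative of $u$ with respect to $\alpha$, so $\s$ is automatically minimal. For the area-stationarity characterization, however, the paper simply notes that $\s_0=\{\cos(\alpha)u_x+\sin(\alpha)u_y=0\}$ consists of singular curves and then invokes the classification Lemma \ref{stazionariaconcurva} (complete area-stationary surfaces with a singular curve are right-handed helicoids or planes), implicitly discarding the helicoid since it is not a graph over the $(x,y)$-plane of the form $u(x,y)=0$. You instead bypass that classification entirely and apply Corollary \ref{charmeetsingular} directly: with $u_\alpha=0$ the characteristic field is $Z=\pm X=\pm\ptl_\alpha$, the singular curve is the lift $\Gamma(\eps)=(x(\eps),y(\eps),\alpha(\eps))$ of $C=\{u=0\}$ by its tangent angle, and the frame computation $\dot\Gamma=Y+\alpha'X$ shows that orthogonality of $Z$ and $\dot\Gamma$ is exactly the vanishing of the signed curvature of $C$, i.e.\ $C$ is a line. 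Your computations check out ($\ptl_x=\cos(\alpha)Y+\sin(\alpha)T$, $\ptl_y=\sin(\alpha)Y-\cos(\alpha)T$, and $g(\dot\Gamma,Z)=\pm\alpha'$ are all correct). What each approach buys: the paper's proof is a two-line corollary of the heavier Lemma \ref{stazionariaconcurva}, whose proof uses Jacobi-like fields along characteristic curves; yours is self-contained and more elementary, gives both implications of the ``if and only if'' explicitly (the paper's appeal to the classification only visibly handles the forward direction), and makes transparent the geometric reason a line is forced. The only cosmetic remark is that ``generically two singular points per fiber'' should just be ``exactly two,'' since $\nabla u\neq 0$ on $C$ gives precisely two values of $\alpha$ modulo $2\pi$ with $(\cos\alpha,\sin\alpha)\perp\nabla u$.
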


\begin{proof} It is sufficient observe that $u_{\alpha}$ or $u_{\alpha \alpha}$ multiply each term of equation (\ref{RTminimal}). Furthermore it is clear that a surface $\s$ of the type $u=u(x,y)$ contains two singular curves whose union is $\s_0=\{(x,y,\alpha)\mc{RT}:  \cos(\alpha)u_x+\sin(\alpha)u_y=0  \}$; by Lemma \ref{stazionariaconcurva} the surface is a plane $\s_{a,b,c}=\{(x,y,\alpha)\in\mc{RT}:ax+by+c=0, \,a,b\in\rr,\, c\in\mc{S}^1\}$ .
\end{proof}

In the sequel we investigate the stability of the two families of area-stationary surfaces that contains singular curves. 

\begin{proposition} All planes $\s_{a,b,c}=\{(x,y,\alpha)\in\mc{RT}:ax+by+c=0, \,a,b\in\rr,\, c\in\mc{S}^1\}$ are unstable area-stationary surfaces. 
\end{proposition}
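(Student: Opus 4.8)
The plan is to exhibit a compactly supported variation of $\s_{a,b,c}$ with strictly negative second variation, driven by the fact that the stability indicator $W-c_1g(\tau(Z),\nu_h)$ is strictly positive on such a plane. First I would fix coordinates adapted to the surface. Writing $a=r\cos\beta$, $b=r\sin\beta$ with $r=\sqrt{a^2+b^2}$ and setting $\phi=\alpha-\beta$, a direct computation in the frame $\{X,Y,T\}$ gives the unit normal $N=\cos\phi\,Y+\sin\phi\,T$, so that $|N_h|=|\cos\phi|$, $g(N,T)=\sin\phi$, $\nu_h=\pm Y$ and $Z=J(\nu_h)=\mp X=\mp\partial_\alpha$. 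Thus the characteristic curves are the vertical geodesics $\alpha\mapsto(x_0,y_0,\alpha)$; they are closed in the $\alpha$-circle and meet the two singular curves $\{\phi=\pm\pi/2\}$ orthogonally, being split by them into regular arcs of length $\pi$. By Remark \ref{ossWrt} one has $W-c_1g(\tau(Z),\nu_h)=g(Z,X)^2\equiv 1>0$ on $\s_{a,b,c}$, together with $g(\tau(Z),Z)=0$ and, by Lemma \ref{R-e1tor}, $g(R(T,Z)\nu_h,Z)-Z(g(\tau(Z),\nu_h))=0$; in particular $\s_{a,b,c}$ meets every hypothesis of Proposition \ref{necessariastabilita} except that its singular set is non-empty, which is exactly what forbids a direct citation.

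I would then compute the vertical component of the Jacobi-like field $V$ produced, as in Proposition \ref{jacobifields} and in the proof of Lemma \ref{stazionariaconcurva}, by flowing the characteristic curves along the integral curve of $S$. Since $\lambda=0$ and here $\beta_1=W-c_1g(\tau(Z),\nu_h)=1$, $\beta_2=0$, the function $g(V,T)$ solves $g(V,T)'''+g(V,T)'=0$, and the initial data $V(0)=S$ give $g(V,T)(s)=-\cos\phi$. The crucial point is that this oscillating Jacobi profile vanishes precisely at $\phi=\pm\pi/2$, i.e.\ at the two singular points of the characteristic circle, so that the regular arc has length equal to the first zero of $g(V,T)$: the arc terminates exactly at a conjugate point. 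This is the geometric reason why Proposition \ref{segnoL} fails here (indeed $\mathcal{L}(|N_h|)$ turns out to be negative) and why the surface sits at the borderline of stability.

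The instability itself I would extract by adapting, loop by loop, the test-function construction in the proof of Proposition \ref{necessariastabilita} (and \cite[Theorem~6.7]{Ro}), which is designed exactly for the case $W-c_1g(\tau(Z),\nu_h)>0$ along closed characteristic curves. Concretely I would set $v=|g(V,T)|^{1/2}=|\cos\phi|^{1/2}$ and use the identity of Lemma \ref{indiceuv} on each open regular arc, where $v$ does not vanish; since $\beta_1>0$, the bulk integrand reduces to a manifestly destabilising quantity, and the admissible competitor $uv^{-1}|N_h|=u\,|\cos\phi|^{1/2}$ vanishes at $\phi=\pm\pi/2$ and is hence supported in $\s-\s_0$, with $u$ a cut-off that is constant along the characteristic loops. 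Feeding this into the index form of Proposition \ref{stabilityoperatorI} (equivalently into $\mathcal{Q}$ of Theorem \ref{Stability operator II}) and letting the cut-off converge to the Jacobi profile along the loop, one is led to a negative index, so that $\s_{a,b,c}$ is unstable.

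The hard part is precisely the singular set. Because $v=|g(V,T)|^{1/2}$ degenerates at $\phi=\pm\pi/2$, Lemma \ref{indiceuv}, which assumes $v$ never vanishes, can only be used on the open arcs, and the integration by parts behind it (Lemma \ref{intbypart2}) then produces boundary contributions at the two conjugate/singular points that must be shown to carry the right sign and to survive the limit, while at the same time keeping the non-negative boundary term $\int_{(\s_0)_c}S(u)^2$ of $\mathcal{Q}$ from absorbing the negative interior term. Equivalently, one must verify that the conjugate point at the end of each length-$\pi$ arc genuinely lowers the area rather than leaving the second variation merely non-negative; a clean way to do this is to run the construction on a full closed characteristic loop, two arcs glued across a singular point, using $\nu_h^{+}=-\nu_h^{-}$ from Theorem \ref{singularsetth} to match the profile across $\s_0$, so that the positive singular contributions cancel in pairs and only the negative interior term remains.
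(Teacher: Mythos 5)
Your proposal does not follow the paper's argument, and it has a genuine gap exactly at the point you yourself flag as ``the hard part''. The machinery you want to import from Proposition \ref{necessariastabilita} and Lemma \ref{indiceuv} is built on the weight $v=|g(V_\eps,T)|^{1/2}=(f_\eps|N_h|)^{1/2}$ being bounded away from zero, which is precisely the hypothesis $\s_0=\emptyset$. On $\s_{a,b,c}$ the profile $g(V,T)=-f_\eps|N_h|$ vanishes at the two singular points of each characteristic circle, so $v^{-1}$, $Z(v^{-1})^2$, $Z(Z(v^{-4}))$ and $\mc{L}(|N_h|)$ (which contains $c_1^2g(N,T)^2|N_h|^{-2}$) all blow up like negative powers of $|N_h|\sim|\cos\phi|$; the integrals on the right-hand side of \eqref{eq:indiceuv} involving $v^{-1}$ and $\mc{L}(|N_h|)$ then diverge near $\s_0$, and the competitor $uv^{-1}|N_h|\sim u\,|\cos\phi|^{1/2}$ has $|N_h|^{-1}Z(uv^{-1}|N_h|)^2\sim|\cos\phi|^{-2}$, which is not integrable along the loop. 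So Lemma \ref{indiceuv} cannot be applied on the open arcs and then ``glued'': the identity is obtained by an integration by parts whose contributions at the conjugate points are infinite, not merely of unknown sign. Your proposed cure --- matching the profile across $\s_0$ via $\nu_h^{+}=-\nu_h^{-}$ ``so that the positive singular contributions cancel in pairs'' --- cannot work for the term $\int_{(\s_0)_c}S(u)^2$ of $\mc{Q}$ in Theorem \ref{Stability operator II} (equivalently $\int_{\Ga}S(w)^2$ in Lemma \ref{secondvar:scurve}): it is a square, hence non-negative on each singular line separately, and nothing can cancel it; it can only be made small.

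That last observation is the paper's whole proof, and it runs in the orthogonal direction, literally: instability is not produced by oscillation along the closed characteristic circles (where your conjugate points live) but by stretching along the singular lines. For the plane $y=0$ one takes $u=u(x)$ with $u\in\mc{C}^{\infty}_0([-x_0,x_0])$, which is constant along every characteristic circle, hence satisfies $Z(u)=0$ near $\s_0$ and is admissible for $\mc{Q}$; an explicit computation of $q$, $\xi$, $\zeta$ and $\theta(S)$ gives $\mc{Q}(u)$ as a strictly negative constant times $\int u(x)^2\,dx$ (coming from integrating the $q$-term and the boundary terms over the $\alpha$-circle) plus $2\int u'(x)^2\,dx$ (the $S(u)^2$ contribution of the two singular lines). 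Since $\inf\{\int_{\rr}u'^2\,dx\,(\int_{\rr}u^2\,dx)^{-1}:u\in\mc{C}^{\infty}_0(\rr)\}=0$, a sufficiently stretched $u$ makes $\mc{Q}(u)<0$. This stretching in the $x$-direction is the one idea your proposal never invokes, and without it the positive singular term cannot be beaten.
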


\begin{proof} We take for simplicity a plane of equation  $y=0$. Then we have 
\[
\nu_h=\frac{\sin(\alpha)}{|\sin(\alpha)|}Y\quad Z=\frac{\sin(\alpha)}{|\sin(\alpha)|}X.
\]
Then we get $g(\tau(Z),\nu_h)=-1/2$ and $W-g(\tau(Z),\nu_h)=1$ by Remark \ref{ossWrt}. Furthermore using \eqref{Gchistoffel} we can compute $\theta(S)=-|N_h|$ and putting a function $u=u(x)$, with $u\in\mc{C}^\infty_0([-x_0,x_0])$ and $x_0>0$, in the stability operator in Theorem \ref{Stability operator II} we get
\[
\mc{Q}(u)=\left(\m_{[-x_0,x_0]}u(x)^2\,dx\right)\left(-\m_{[0,2\pi]}\frac{1}{4}|\sin(\alpha)|^3|\cos(\alpha)|\,d\alpha \right)+2\m_{[-x_0,x_0]}u'(x)^2\,dx
\]
and as 
\[
\inf\left\{  \left(   \m_{\rr}u'(x)\,dx \right)\left(\m_{\rr}u(x)^2\,dx\right)^{-1}:u\in\mc{C}^{\infty}_0(\rr)  \right\}=0,
\]
there exists a function $u\in\mc{C}^\infty_0([-x_0,x_0])$ such that $\mc{Q}(u)<0$.

\end{proof}

\begin{remark} A plane characterized by equation $ax+by+c\alpha=d$ is not minimal if $a,b,c\neq 0$. 
\end{remark}

\begin{proof} That plane is minimal if and only if the following equation hold: 
\begin{equation*}
c\{ ab(\cos^2\alpha-\sin^2\alpha)+\cos\alpha \sin\alpha (b^2-a^2)  \}=0,
\end{equation*}
that implies $c=0$ or $a=b=0$. 
 \end{proof}

\begin{proposition} Let $\s_{c}=\{(x,y,\alpha)\in\mc{RT}:x\sin(c\,\alpha)-y\cos(c\,\alpha)=0, c\in\mc{S}^1\}$. Then $\s_c$ is a stable, area-stationary surface. 
\end{proposition}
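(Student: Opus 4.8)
The plan is to establish area-stationarity first, and then to show directly that the second variation is non-negative for every admissible variation, using the formulae of Section~7 together with the simplifications special to $\mc{RT}$ (where $W=\tfrac12$ and $c_1=1$).

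I would begin by recording the geometry of $\s_c$ along its regular part: the horizontal normal $\nu_h$ is proportional to $X$ and the characteristic field $Z$ to $Y$, equivalently the characteristic curves are the horizontal straight lines \eqref{RTcar1}. Since $\nabla_Y X=0$ by \eqref{Gchistoffel}, this gives $\nabla_Z\nu_h=0$, whence $H=-g(\nabla_Z\nu_h,Z)=0$, so $\s_c$ is minimal; the same conclusion follows by substituting the defining function into \eqref{RTminimal}. The singular set is the axis $\Gamma$, a single $\mc{C}^1$ line with tangent direction $X$, and because $Z\parallel Y\perp X$ the characteristic curves meet $\Gamma$ orthogonally, so Corollary~\ref{charmeetsingular} yields that $\s_c$ is area-stationary.

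The core of the argument is the sign of the potential $q$ of Theorem~\ref{seconvariation}. Along $\s_c$ I would record, from Remark~\ref{ossWrt} together with $\nu_h\parallel X$ and $Z\parallel Y$, the identities $g(\tau(Z),Z)=0$, $g(\tau(Z),\nu_h)=\tfrac12$, $W-c_1g(\tau(Z),\nu_h)=0$ and $\omega(\nu_h)=0$; the connection coefficients \eqref{Gchistoffel} give $\theta(S)=\tfrac12|N_h|$, while $g(N,T)$ and $|N_h|$ are read off from the Riemannian gradient of the defining function. With these values the curvature term of $q$ vanishes: in \eqref{curvaturaT} each summand is zero because $H=0$, $g(\tau(Z),Z)\equiv 0$, $g(\tau(Z),\nu_h)$ is constant and $\omega(\nu_h)=0$, so $g(R(Z,T)\nu_h,Z)=0$. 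Substituting $W=\tfrac12$, $c_1=1$, $g(\tau(Z),\nu_h)=\tfrac12$ and $\theta(S)=\tfrac12|N_h|$, the expression for $q$ collapses to
\begin{equation*}
q=|N_h|-|N_h|\big(\tfrac32|N_h|-\tfrac12|N_h|\big)^2=|N_h|-|N_h|^3=|N_h|\,g(N,T)^2\ge 0 .
\end{equation*}

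Finally I would assemble the second variation. For variations supported in the regular set $\s_c-\Gamma$, Theorem~\ref{seconvariation} gives $A''(0)=\m_{\s_c}\{|N_h|^{-1}Z(u)^2+qu^2\}\,d\s\ge 0$, the divergence terms integrating to zero. For variations that move the singular line I would split $\s_c$ into a tubular neighbourhood of $\Gamma$ and its complement, following the cut-off argument in the proof of Theorem~\ref{Stability operator II}: on the complement the integrand is non-negative since $q\ge0$, while near $\Gamma$ the vertical contribution of Lemma~\ref{secondvar:scurve} reduces, using $g(\tau(Z),Z)=0$, to $\m_{\s_c}\tfrac12 w^2|N_h|\,d\s+\m_{\Gamma}S(w)^2\,d\Gamma\ge 0$. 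The main obstacle is controlling the interface term $2\m_{\Gamma}(\xi+\zeta)g(Z,\nu)u^2$ produced by the cut-off: here orthogonality gives $g(Z,\nu)=\pm1$ and the vanishing of $g(\tau(Z),Z)$ simplifies $\xi$ and $\zeta$, and one must check that the limit of these boundary integrals as the neighbourhood shrinks preserves the sign. Granting this, $A''(0)\ge 0$ for every admissible variation, so $\s_c$ is stable.
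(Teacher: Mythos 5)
Your strategy coincides with the paper's: check minimality and the orthogonality of the characteristic foliation with the singular line to get area-stationarity via Corollary \ref{charmeetsingular}, then verify that the pasted quadratic form of Theorem \ref{Stability operator II} is non-negative term by term. Your evaluation of the regular part is sound; note that your value $\theta(S)=\tfrac12|N_h|$ (which follows from $\n_T X=-\tfrac12 Y$ in \eqref{Gchistoffel} with $c_2=1$, $c_3=0$) differs from the value $\theta(S)=|N_h|$ used in the paper, so you obtain $q=|N_h|\,g(N,T)^2$ where the paper obtains $q=|N_h|(1-\tfrac14|N_h|^2)$; both are non-negative, so the discrepancy is immaterial for the conclusion. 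The vanishing of $g(R(Z,T)\nu_h,Z)$ can also be obtained at once from Lemma \ref{R-e1tor}, since $\mc{RT}$ is unimodular and $H=0$; your appeal to \eqref{curvaturaT} needs the extra observation that $\theta(\nu_h)=0$ to kill the transversal derivative $\nu_h(g(\tau(Z),Z))$, which does not follow merely from $g(\tau(Z),Z)\equiv 0$ on $\s$.

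The one genuine gap is the step you explicitly defer: the interface term $2\m_{\Ga_0}(\xi+\zeta)g(Z,\nu)u^2\,d\Ga_0$ is the only contribution whose sign is not obvious a priori, and a stability proof that proceeds by showing every summand is non-negative cannot simply ``grant'' it. It does close in two lines: on $\Ga_0$ one has $|N_h|=0$, so $\zeta$, which carries an overall factor $|N_h|^2$, vanishes there, while $\xi$ restricts to $g(N,T)\{c_1 g(N,T)^2+2g(\tau(Z),\nu_h)\}u^2=2g(N,T)u^2$; since the characteristic lines meet $\Ga_0$ orthogonally from both sides, the orientation conventions of Theorem \ref{Stability operator II} give $g(N,T)g(Z,\nu)=1$, and the term equals $4\m_{\Ga_0}u^2\,d\Ga_0\geq 0$ --- precisely the boundary integral appearing in the paper's final expression for $\mathcal{Q}(u)$. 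With that computation inserted, your argument is complete and agrees with the paper's proof.
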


\begin{proof} By a direct substitution in \eqref{RTminimal} $\s_c$ is minimal.  Now we suppose $c=1$ for simplicity and we have 
\[
\nu_h=\frac{x\cos(\alpha)+y\sin(\alpha)}{|x\cos(\alpha)+y\sin(\alpha)|}X, \, \,\,\,\,Z=\frac{x\cos(\alpha)+y\sin(\alpha)}{|x\cos(\alpha)+y\sin(\alpha)|}Y
\]
 outside the only singular curve $\Ga_0=\{(x,y,\alpha)\in\mc{S}^1:x=y=0\}$, so the characteristic curves meet orthogonally the singular one.  
 
 Now by \eqref{Gchistoffel} we have $\theta(S)=|N_h|$ and by Remark \ref{ossWrt} we get $-W+g(\tau(Z),\nu_h))=0$ and $g(\tau(Z),\nu_h)=1/2$. Then the stability operator for non-singular surfaces in Theorem \ref{Stability operator II} become 
\[
\mc{Q}(u)=\m_{\s} \left\{|N_h|^{-1}Z(u)^2+|N_h|\left(1-\frac{1}{4}|N_h|^2\right)u^2\right\}d\s +4\m_{\Ga_0}(u\big|_{\Ga_0})^2d\Ga_0+\m_{\Ga_0}S(u\big|_{\Ga_0})^2d\Ga_0,
\]
which is non-negative for all functions $u\in\mc{C}^1_0(\s_c)$. 
\end{proof}

\begin{theorem}\label{classificationstablesRT} Let $\s$ be a stable, immersed, oriented and complete surface of class $\mc{C}^2$ in $\mc{RT}$. Then we distinguish two cases:
\begin{itemize}
\item [(i)] if $\s$ is a non-singular surface, then it is a vertical plane $\s_a$;
\item [(ii)] if $\s$ is a surface with non-empty singular set, then it is the right-handed helicoid $\s_c$. 
\end{itemize}
\end{theorem}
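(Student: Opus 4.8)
The plan is to reduce the classification to the partial results already established in this section, splitting the argument according to whether the singular set is empty. Since a stable surface is by definition a critical point of the area with nonnegative second variation, $\s$ is in particular area-stationary, hence minimal on $\s-\s_0$ and, by Corollary \ref{charmeetsingular}, its characteristic curves meet any singular curve orthogonally. These are exactly the hypotheses under which the structural lemmas for $\mc{RT}$ apply, so I would first record that stability supplies both minimality and area-stationarity, and then dichotomize on whether $\s_0=\emptyset$.

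For part (i), the case $\s_0=\emptyset$ is already closed: the Corollary classifying stable, oriented, complete, immersed, non-singular minimal surfaces in $\mc{RT}$ as the vertical planes $\s_a=\{(x,y,\alpha)\in\mc{RT}:\alpha=a\}$ gives the conclusion verbatim. The only verification needed is that the hypotheses of that Corollary are met, namely minimality and completeness, which are inherited from stability and from the standing assumptions of the theorem.

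For part (ii), I would suppose $\s_0\neq\emptyset$. By Theorem \ref{singularsetth} the singular set decomposes into isolated points and $\mc{C}^1$ singular curves with nonvanishing tangent. Lemma \ref{stazionariopuntosingolare} rules out isolated singular points in $\mc{RT}$, so $\s_0$ must in fact contain a singular curve $\Ga$. Lemma \ref{stazionariaconcurva} then applies and forces $\s$ to be either a right-handed helicoid $\s_c$ or a plane $\s_{a,b,c}=\{(x,y,\alpha)\in\mc{RT}:ax+by+c=0\}$. To conclude, I would invoke stability proper: the Proposition showing that every plane $\s_{a,b,c}$ is an \emph{unstable} area-stationary surface eliminates that alternative, leaving $\s=\s_c$, whose stability is confirmed by the Proposition asserting that $\s_c$ is a stable area-stationary surface.

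I expect no serious obstacle beyond bookkeeping, since the present statement is genuinely an assembly of the preceding lemmas and propositions. The only points requiring care are verifying that the singular-set decomposition of Theorem \ref{singularsetth} combined with Lemma \ref{stazionariopuntosingolare} leaves \emph{only} singular curves, so that Lemma \ref{stazionariaconcurva} is actually applicable, and confirming that it is stability (rather than mere stationarity) that discards the planes $\s_{a,b,c}$. The substantive analytic work lives in those earlier results, chiefly the Jacobi-like field computation behind Lemma \ref{stazionariaconcurva} and the cut-off argument behind the instability Proposition, not in the classification theorem itself.
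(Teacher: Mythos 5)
Your proposal is correct and is essentially the paper's own argument: the theorem is stated as the summary of Section 10, obtained exactly by combining the non-singular Corollary for case (i) with Theorem \ref{singularsetth}, Lemma \ref{stazionariopuntosingolare}, Lemma \ref{stazionariaconcurva}, and the instability of the planes $\s_{a,b,c}$ for case (ii). Your two points of care (that only singular curves survive the decomposition, and that stability rather than stationarity eliminates the planes) are precisely the load-bearing steps.
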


Finally we would remark that the family of planes $\s_a$ are area-minimizing by a standard calibration argument, in fact they form a family of area-stationary surfaces who foliate $RT$.

\section*{Acknowledgment}

It is a duty as well as a pleasure thanks Manuel Ritor\'e for many fruitful discussions, suggestions and a carefully reading of a first version of the paper. We also thank C\'esar Rosales for some discussions about his recent paper \cite{Ro} and helpful comments. Finally we are grateful to Francesco Serra Cassano who raised our attention to Proposition 1.20 in \cite{Fo-St}.

\bibliography{variazionicopia}

\end{document}